\newtheorem{theorem}{Theorem}
\numberwithin{theorem}{section}
\newtheorem{proposition}[theorem]{Proposition}
\newtheorem{lemma}[theorem]{Lemma}
\newtheorem{corollary}[theorem]{Corollary}
\theoremstyle{remark}
\newtheorem{example}[theorem]{Example}
\newtheorem{remark}{Remark}[section] 
\newtheorem{question}{Question}
\newcommand{\kerr}{\text{Ker}}
\newcommand{\imm}{\text{Im}}
\newcommand{\sig}{{\tt SIG }}
\newcommand{\sigg}{{\tt SIG}}
\newcommand{\sigs}{{\tt SIGs }}
\begin{document}

\title[On $\ell_1$ embeddings of finite metric spaces, and SIG graphs]{On $\ell_1$ embeddings of finite metric spaces, and sphere-of-influence graphs}
\begin{abstract}
We introduce the {\em pair-cut cone $PCUT_n$} of metrics on sets with $n\ge 3$ elements, that correspond to linear combinations with non-negative coefficients of the cut-metrics resulting from cuts that are pairs. Given a metric, we fully characterize membership in the pair-cut cone in terms of quantities computed from the metric directly. We also prove a new result by which a metric $d$ that satisfies a system of inequalities, lies in the (full) cut cone of metrics, making it $\ell_1$-embeddable into Euclidean space. 

We give applications of our results to the $\ell_1$-embeddability of simple graphs into Euclidean space as {\em sphere-of-influence graphs}. We exhibit an example of a simple graph that admits no such $\ell_1$-metric in the pair-cut cone.   
\end{abstract}
\subjclass[2020]{05C12, 05C50} 
\author{Stanislav Jabuka}
\email{jabuka@unr.edu}
\author{Ehsan Mirbagheri} 
\email{mirbagherisme@gmail.com}
\address{Department of Mathematics and Statistics, University of Nevada, Reno NV 89557.}

\maketitle

\section{Introduction and Main Results} 
This work concerns itself with two related problems. The first regards the question of when a metric space $(V_n,d)$, with $V_n = \{1,\dots, n\}$, can isometrically be embedded into some Euclidean space  equipped with the $\ell_1$-metric. The second considers simple graphs $G $ and asks if they can be realized as sphere-of-influence graphs with respect to the $\ell_1$-metric in Euclidean space. While it is known that evey simple graph is the sphere-of-influence graph in some metric space, the realization question in the $\ell_1$-metric in Euclidean spaces, remains open. 
\vskip3mm  
\subsection{$\ell_1$-embeddability and the pair-cut cone} \label{Subsection on the ell1-embeddability and the pair-cut cone}
Let $d$ be a semi-metric on the finite set $V_n = \{1,\dots, n\}$, with $n\ge 3$. The question of whether $(V_n,d)$ admits an $\ell_1$-embedding into some Euclidean space $\mathbb R^m$ has a long history with contributions by many authors, see for example \cite{Blumenthal, Caley, DezaLaurentBook, Menger, Schoenberg}. $\ell_1$-embeddability is known to be equivalent to $d$ belonging to the cut cone $CUT_n$, the positive cone spanned by {\em cut-metrics} $\delta_C:V_n\times V_n\to\{0,1\}$, which for a subset (or {\em cut}) $C\subset V_n$ are defined as 
\begin{align} \label{Definition of cut semi-metric from introduction}
	\delta_C(i,j) = \left\{ 
	\begin{array}{cl} 
		0 &\quad ; \quad |\{i,j\}\cap C| = 0\text{ or } 2,  \cr
		& \cr 
		1 &\quad ; \quad |\{i,j\}\cap C| = 1.  
	\end{array} 
	\right. 
\end{align}
We simultaneously view $\delta_C$ as a semi-metric on $V_n$ and as a vector in $\mathbb R^{n\choose 2}$. The latter space has dimension equal to the number of pairs in $V_n$, and with the lexicographic ordering of pairs, the $\{i,j\}$-th coordinate of the vector $\delta_C$ equals $\delta_C(\{i,j\})$. 
 
The problem of deciding whether $d$ lies in $CUT_n$ is known to be NP-complete \cite{AvisDeza}, and thus challenging. Many inequalities are known to obstruct a metric $d$ from belonging to $CUT_n$, including for example the hypermetric inequalities and the null-type inequalities (cf. Section 6.1 in  \cite{DezaLaurentBook}). Even more specialized inequalities can be found in  \cite{DezaLaurant1, DezaLaurant2}. Despite this wealth of obstructions, it remains difficult to decide if a metric $d$ lies in the cut cone.  
\vskip3mm
We propose here the study of a simpler problem, that of determining whether a metric $d$ belongs to the {\em pair-cut cone $PCUT_n$}, defined as the positive cone in $\mathbb R^{n\choose 2}$ spanned by all ${n \choose 2}$ cut-metrics $\delta_C$ with $|C|=2$, referred to as {\em pair-cuts}. 

Studying cut-metrics $\delta_C$ associated to special cuts $C\subset V_n$ rather than all possible cuts, is not a new idea. For example, in \cite{DezaLaurant3}, cuts of only even or odd type are studied. Similarly, in \cite{DezaLaurantGrotschel}, various types of cuts are considered, including equicuts, multicuts, equimulticuts, and balanced multicuts. Our approach of studying only pair-cuts is new, and we offer a complete characterization of membership in  $PCUT_n$. 
\begin{theorem} \label{Main theorem stated in the introduction}
For a metric $d$ on $V_n$ with $n\ge 5$, define the trace Tr$(d)$ of the metric $d$, and the star-trace $s_i$ of a vertex $i\in V_n$ as:  
\begin{itemize}
	\item[(i)] Tr$(d) = \sum _{1\le i<j\le n} d(i,j)$, 
	\vskip2mm
	\item[(ii)] $s_i = \sum_{j\ne i} d(i,j)$.  
\end{itemize}
Then $d$ lies in the pair-cut cone $PCUT_n$ if and only if 
\begin{equation} \label{Main inequality for the coordinates of wsq to be nonnegative from the introduction}
	s_i + s_j\ge (n-4)\cdot d(i,j) + \frac{2}{n-2}\text{Tr}(d), \qquad \text{ for all pairs } \{i, j\} \subset V_n. 
\end{equation}
\end{theorem}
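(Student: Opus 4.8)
The plan is to identify $PCUT_n$ with the image of the nonnegative orthant under the linear map $L\colon\mathbb{R}^{\binom{n}{2}}\to\mathbb{R}^{\binom{n}{2}}$ sending a coefficient vector $(\lambda_{k\ell})_{\{k,\ell\}\subset V_n}$ to $\sum_{\{k,\ell\}}\lambda_{k\ell}\,\delta_{\{k,\ell\}}$, and then to prove that for $n\ge 5$ this map $L$ is a linear isomorphism. Granting that, every $d$ has a \emph{unique} expression $d=\sum_{\{k,\ell\}}\lambda_{k\ell}\,\delta_{\{k,\ell\}}$, and $d\in PCUT_n$ exactly when all these coordinates are nonnegative; so the theorem comes down to computing the $\lambda_{k\ell}$ from $d$ and rewriting the conditions $\lambda_{k\ell}\ge 0$ as \eqref{Main inequality for the coordinates of wsq to be nonnegative from the introduction}.

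Everything rests on a single identity. Since $\delta_{\{k,\ell\}}(i,j)=1$ precisely when $|\{k,\ell\}\cap\{i,j\}|=1$, evaluating a representation $d=\sum_{\{k,\ell\}}\lambda_{k\ell}\,\delta_{\{k,\ell\}}$ at a pair $\{i,j\}$ yields
\[
d(i,j)=\sigma_i+\sigma_j-2\lambda_{ij},\qquad \sigma_i:=\sum_{\ell\ne i}\lambda_{i\ell}.
\]
I would then sum this over $j\ne i$ to get $s_i=(n-4)\sigma_i+\Sigma$, where $\Sigma:=\sum_k\sigma_k$, and sum it over all pairs to get $\mathrm{Tr}(d)=(n-2)\Sigma$ (using that each $\sigma_i$ occurs in $n-1$ pairs and that $\sum_{\{k,\ell\}}\lambda_{k\ell}=\tfrac12\Sigma$). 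Since $n\ge 5$, both $n-2$ and $n-4$ are invertible, so one can solve backwards: $\Sigma=\mathrm{Tr}(d)/(n-2)$, then $\sigma_i=(s_i-\Sigma)/(n-4)$, and finally $\lambda_{ij}=\tfrac12\big(\sigma_i+\sigma_j-d(i,j)\big)$. In particular $d=0$ forces every $\lambda_{ij}=0$, so $\ker L=\{0\}$; since the domain and codomain of $L$ have the same dimension, $L$ is the claimed isomorphism, with inverse given by the formula just derived.

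To conclude, for an arbitrary $d$ the unique pair-cut coordinates are these $\lambda_{ij}$, so $d\in PCUT_n$ if and only if $\sigma_i+\sigma_j\ge d(i,j)$ for every pair $\{i,j\}$; substituting $\sigma_i+\sigma_j=(s_i+s_j-2\Sigma)/(n-4)$ together with $\Sigma=\mathrm{Tr}(d)/(n-2)$ and clearing the positive factor $n-4$ turns this into $s_i+s_j\ge(n-4)\,d(i,j)+\tfrac{2}{n-2}\mathrm{Tr}(d)$, which is \eqref{Main inequality for the coordinates of wsq to be nonnegative from the introduction}; I note in passing that the metric axioms on $d$ are never used, so the equivalence in fact holds for every $d\in\mathbb{R}^{\binom{n}{2}}$. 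The one genuinely load-bearing step is the isomorphism claim, and it is precisely here that the hypothesis $n\ge 5$ is indispensable: for $n=4$ one has $\delta_{\{k,\ell\}}=\delta_{V_n\setminus\{k,\ell\}}$, so the pair-cuts are not even distinct, $L$ acquires a kernel, and the argument's division by $n-4$ breaks down as well. I also expect the bookkeeping in the two summation steps — counting the multiplicities of the $\sigma_i$ and relating $\sum_{\{k,\ell\}}\lambda_{k\ell}$ to $\Sigma$ — to be where sign or index slips are easiest to make, so those deserve care; and surjectivity of $L$ should be deduced from injectivity plus the dimension count, not by re-substituting the formula, to keep the argument non-circular.
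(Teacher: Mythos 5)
Your proposal is correct, and it reaches exactly the paper's formula
\[
\lambda_{ij}=(w_{sq})_{\{i,j\}}=-\tfrac12 d(i,j)-\tfrac{1}{(n-2)(n-4)}\mathrm{Tr}(d)+\tfrac{1}{2(n-4)}(s_i+s_j),
\]
but by a genuinely different route. The paper identifies the square cut-matrix $S_{sq}$ with the adjacency matrix of the line graph $L(K_n)$, quotes its known eigenvalues $2n-4$, $n-4$, $-2$ to get invertibility for $n\ge 5$, builds the orthogonal projectors onto the three eigenspaces out of the vertex-edge incidence matrix $B$ of $K_n$ via the identities $B^\tau B=2I+A$ and $BB^\tau=(n-2)I+J$, and then reads off $S_{sq}^{-1}d$ coordinatewise from the spectral decomposition. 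You instead exploit the identity $d(i,j)=\sigma_i+\sigma_j-2\lambda_{ij}$ and its two summed consequences $s_i=(n-4)\sigma_i+\Sigma$ and $\mathrm{Tr}(d)=(n-2)\Sigma$ to solve the system by back-substitution, getting injectivity (hence, by dimension count, invertibility) and the explicit inverse in one stroke. Your argument is shorter and self-contained: it does not need the eigenvalues of $L(K_n)$ as an input, and in fact it reproves the regularity statement of the paper's Theorem 3.3 for $n\ge 5$ as a by-product. What the paper's heavier machinery buys is the full spectral data (eigenspaces, projectors, the role of $\ker B$), which the authors reuse structurally when they move on to the full cut-matrix $S$ in later sections. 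Your remarks that the metric axioms are never used and that $n=4$ fails because complementary pairs give equal cut-metrics are both accurate and consistent with the paper. I find no gap.
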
 
To our knowledge, such a complete characterization of metrics belonging to the positive cone spanned by a specialized collection of cut-metrics is the first of its kind. The following corollary offers an easier to verify condition, one that is only necessary but generally not sufficient for $d$ to lie in $PCUT_n$. 
\begin{corollary} \label{Corollary with main ineqaulity for PCUTn membership stated in the introduction}
	If a metric $d$ on $V_n$ with $n\ge 5$ lies in the pair-cut cone $PCUT_n$, then 
	\begin{equation} \label{Secondary inequality on star-traces and traces of metrics in the pair-cut-cone}
		(n-2)s_i \ge  \text{Tr}(d), \quad \text{ for all } i\in V_n. 
	\end{equation}
\end{corollary}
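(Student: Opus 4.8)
The plan is to deduce the corollary directly from Theorem~\ref{Main theorem stated in the introduction} by averaging the inequality \eqref{Main inequality for the coordinates of wsq to be nonnegative from the introduction} over a suitable family of pairs. Since $d\in PCUT_n$, inequality \eqref{Main inequality for the coordinates of wsq to be nonnegative from the introduction} holds for \emph{every} pair $\{i,j\}\subset V_n$. I would fix a vertex $i\in V_n$ and sum \eqref{Main inequality for the coordinates of wsq to be nonnegative from the introduction} over all $n-1$ pairs of the form $\{i,j\}$ with $j\ne i$, which produces
\begin{equation*}
	(n-1)s_i + \sum_{j\ne i} s_j \;\ge\; (n-4)\sum_{j\ne i} d(i,j) + (n-1)\cdot\frac{2}{n-2}\text{Tr}(d).
\end{equation*}

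The next step is to simplify both sides using the two elementary identities $\sum_{j\ne i} d(i,j) = s_i$ (which is just the definition of the star-trace) and $\sum_{k\in V_n} s_k = 2\,\text{Tr}(d)$ (each pair $\{a,b\}$ is counted once in $s_a$ and once in $s_b$). The latter gives $\sum_{j\ne i} s_j = 2\,\text{Tr}(d) - s_i$, so the left-hand side collapses to $(n-2)s_i + 2\,\text{Tr}(d)$ while the right-hand side becomes $(n-4)s_i + \frac{2(n-1)}{n-2}\text{Tr}(d)$. Rearranging, the $s_i$ terms combine to $2s_i$ and the trace terms combine to $\bigl(\frac{2(n-1)}{n-2}-2\bigr)\text{Tr}(d) = \frac{2}{n-2}\text{Tr}(d)$, yielding $2s_i \ge \frac{2}{n-2}\text{Tr}(d)$, which is exactly \eqref{Secondary inequality on star-traces and traces of metrics in the pair-cut-cone} after multiplying by $\frac{n-2}{2}$.

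There is essentially no serious obstacle here: the argument is a one-line summation followed by bookkeeping with the identity $\sum_k s_k = 2\,\text{Tr}(d)$, and the hypothesis $n\ge 5$ (in fact $n\ge 3$ suffices for this step, but $n\ge 5$ is needed for Theorem~\ref{Main theorem stated in the introduction} to apply) guarantees all denominators are positive and the coefficients behave as claimed. The only point requiring a moment's care is checking that nothing degenerates — for instance that the coefficient $(n-4)$ of $d(i,j)$ is handled consistently when it is summed — but since we replace $\sum_{j\ne i}d(i,j)$ by $s_i$ exactly, no inequality is lost and the bound is in fact tight precisely when \eqref{Main inequality for the coordinates of wsq to be nonnegative from the introduction} is an equality for all pairs containing $i$. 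I would also remark, as the excerpt already notes, that \eqref{Secondary inequality on star-traces and traces of metrics in the pair-cut-cone} is strictly weaker than \eqref{Main inequality for the coordinates of wsq to be nonnegative from the introduction}, so it is necessary but not sufficient for membership in $PCUT_n$.
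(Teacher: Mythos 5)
Your proof is correct and follows essentially the same route as the paper: the paper also sums the inequality \eqref{Main inequality for the coordinates of wsq to be nonnegative from the introduction} over all $j\ne i$ and uses the identity $\sum_{k}s_k = 2\,\text{Tr}(d)$ to collapse $\sum_{j\ne i}s_j$ to $2\,\text{Tr}(d)-s_i$, arriving at $2s_i\ge\frac{2}{n-2}\text{Tr}(d)$. The bookkeeping matches step for step, so nothing further is needed.
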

Applications of Theorem \ref{Main theorem stated in the introduction} and Corollary \ref{Corollary with main ineqaulity for PCUTn membership stated in the introduction}, are given in Section \ref{Section on the square cut matrix}, specifically in Section \ref{Section with examples and applications of embeddability in PCUT_n}. We are able to prove Theorem \ref{Main theorem stated in the introduction} by a careful spectral analysis of a ${n\choose 2}\times {n \choose 2}$ matrix $S_{sq}$ that we call the {\em square cut-matrix}, and which is the underlying matrix of the linear system determining whether $d$ lies in $PCUT_n$. More than just solving this linear system, we determine exactly when the unique solution has all non-negative coefficients. 
\subsection{$\ell_1$-embeddability and the cut cone}
Membership in the full cut cone $CUT_n$ is likewise determined by searching for solutions 
to a linear system  whose associated matrix $S = S_n$, which we call the {\em full cut-matrix}, has size $\left( 2^{n}-2\right)\times {n\choose 2}$. The number $2^n-2$ of rows of this matrix corresponds to the number of ``non-trivial'' cuts of $V_n $, that is all subsets of $V_n$ other than the empty set and $V_n$ itself. We were not able to prove an analogue of Theorem \ref{Main theorem stated in the introduction} in this general case. Nevertheless, we are able to offer this result. 
\vskip3mm
\begin{theorem} \label{Second main theorem for the full cut matrix as stated in the introduction}
Let $d$ be a metric on $V_n$ with $n\ge 5$. For a non-trivial cut $C\subset V_n$, define its {\em star trace $s_C$} as $s_C =  \sum_{i\in C,\, j\notin C}d(i,j)$. If the inequality  
\begin{equation} \label{Sufficient condition for embedding in full cut cone as stated in the introduction}
	s_C \ge \frac{|C|(n-|C|)}{{n\choose 2} + 1} \cdot \text{Tr}(d)	
\end{equation} 
holds for every non-trivial cut $C\subset V_n$, then $d\in CUT_n$. 
\end{theorem}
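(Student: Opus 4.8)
The plan is to produce an explicit representation $d=\sum_{C}\lambda_C\delta_C$ of $d$ as a nonnegative combination of cut-metrics, the sum running over all non-trivial cuts $C\subset V_n$, by solving the underlying linear system $S^{T}\lambda=d$ via its pseudoinverse, where $S=S_n$ is the full cut-matrix whose rows are the vectors $\delta_C^{T}$. The key point is that if one simply sets $\lambda_C:=\langle (S^{T}S)^{-1}d,\ \delta_C\rangle$, then $\sum_C\lambda_C\delta_C=S^{T}S\,(S^{T}S)^{-1}d=d$ automatically, so there is nothing to check about the linear system itself; the entire argument reduces to (a) computing $M:=S^{T}S$ and (b) determining the sign of each $\lambda_C$.

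The crucial observation is that $M=\sum_{C}\delta_C\delta_C^{T}$ is remarkably simple, namely $M=2^{\,n-2}\bigl(I+J\bigr)$, where $I$ and $J$ denote the identity and the all-ones matrix of size $\binom n2\times\binom n2$. This follows from a short count over the three $S_n$-orbits of pairs of $2$-subsets of $V_n$: for a fixed pair $\{i,j\}$ there are $2^{\,n-1}$ non-trivial cuts separating it (the diagonal entry), whereas for two distinct pairs — whether they share a vertex or are disjoint — exactly $2^{\,n-2}$ non-trivial cuts separate both (the common off-diagonal entry). Since $J=\mathbf 1\mathbf 1^{T}$ with $\mathbf 1\in\mathbb R^{\binom n2}$, the Sherman--Morrison formula gives $M^{-1}=2^{-(n-2)}\bigl(I-\tfrac{1}{\binom n2+1}J\bigr)$; this is exactly where the denominator $\binom n2+1=1+\mathbf 1^{T}\mathbf 1$ of inequality \eqref{Sufficient condition for embedding in full cut cone as stated in the introduction} originates.

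With $M^{-1}$ in hand I would compute $\lambda_C$ directly, using $Jd=\mathrm{Tr}(d)\cdot\mathbf 1$, the identity $\langle\mathbf 1,\delta_C\rangle=|C|(n-|C|)$ (the number of pairs separated by $C$), and $\langle d,\delta_C\rangle=s_C$, to obtain
\[
\lambda_C \;=\; 2^{-(n-2)}\Bigl(s_C-\tfrac{|C|(n-|C|)}{\binom n2+1}\,\mathrm{Tr}(d)\Bigr).
\]
Thus the hypothesis \eqref{Sufficient condition for embedding in full cut cone as stated in the introduction} says precisely that $\lambda_C\ge 0$ for every non-trivial cut $C$, and then $d=\sum_C\lambda_C\delta_C$ exhibits $d$ as a nonnegative combination of cut-metrics, i.e. $d\in CUT_n$.

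I do not anticipate a real obstacle: the only genuine computation is the orbit count yielding $M=2^{\,n-2}(I+J)$, which is elementary casework, and everything else is linear algebra. What is worth recording is the reason this argument produces only a sufficient (and not necessary) condition, in contrast with the ``if and only if'' of Theorem \ref{Main theorem stated in the introduction}: the system $S^{T}\lambda=d$ has $2^{\,n}-2$ unknowns and only $\binom n2$ equations, so $\lambda_C=\langle M^{-1}d,\delta_C\rangle$ is merely one particular (minimum-norm) solution; when some of these coefficients come out negative the metric $d$ may still lie in $CUT_n$ through a different nonnegative representation. This also explains why the clean spectral shortcut available here does not, on its own, yield an analogue of Theorem \ref{Main theorem stated in the introduction} for the full cut cone.
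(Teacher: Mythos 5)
Your proposal is correct and is essentially the paper's own argument: the paper likewise computes $S S^\tau = 2^{n-2}(I_m+J_m)$ by the same three-case orbit count, inverts it (by direct verification rather than Sherman--Morrison), forms the particular solution $\mathbf{w}=S^\tau(SS^\tau)^{-1}d$, and reads off exactly your formula $\mathbf w_C = 2^{-(n-2)}\bigl(s_C-\tfrac{|C|(n-|C|)}{\binom n2+1}\mathrm{Tr}(d)\bigr)$, so that the hypothesis is precisely nonnegativity of this coefficient vector. The only differences are notational (your $S$ is the transpose of the paper's) and your closing remark about the minimum-norm solution, which matches the paper's own discussion of why the condition is sufficient but not necessary.
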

Observe that if $C=\{i\}$, then $s_C=s_i$, the star trace of the $i$-th vertex $i\in V_n$, defined in Theorem \ref{Main theorem stated in the introduction}. This theorem is opposite in spirit to the many inequalities mentioned above (e.g. the hypermetric inequalities), which, if violated, obstruct a  metric $d$ from belonging to the cut cone $CUT_n$. In contrast, if a metric $d$ satisfies the inequalities \eqref{Sufficient condition for embedding in full cut cone as stated in the introduction}, that is sufficient for $d$ to lie in $CUT_n$. 
\subsection{Sphere-of-Influence Graphs}  \label{Section in introduction on SIGs}
The second theme we explore is that of {\em sphere-of-influence graphs}, or \sigs for short. Given a simple graph $G=(V_n,E)$ with $E$ the adjacency matrix of $G$ (defined as the $n\times n$ matrix with $E_{i,j}=1$ if the vertices $i$ and $j$ share an edge, and $E_{i,j}=0$ otherwise). A metric $d$ on $V_n$ is said to be a {\em sphere-of-influence} metric for $G$, or a \sigg-metric for $G$,  if the following constraints are satisfied: Firstly, define the {\em radius of influence $r_i$} of the vertex $i\in V_n$ as 
$$r_i = \min_{j\ne i} d(i,j).$$
Then $d$ is a \sigg-metric for $G$ if for any pair $\{i,j\}\subset V_n$, we obtain 
\begin{equation} \label{The SIG inequalities from the introduction}
d(i,j) \left\{ 
\begin{array}{ll}
< r_i+r_j, & \quad ; \quad   E_{i,j} = 1,\cr
\ge r_i+r_j, & \quad ; \quad  E_{i,j} = 0.
\end{array}
\right.  
\end{equation} 
We shall refer to \eqref{The SIG inequalities from the introduction} as the {\em \sigg-inequalities}. 
It is not hard to show that \sigg-metrics exist on every simple graph $G$.
\begin{example} \label{Example of SIG metrics from the introduction}
Given a simple graph $G= (V_n,E)$, let $d_0$ and $d_1$ be the {\em truncated metric} and the {\em graph metric} on $V_n$ respectively, defined as:
$$
d_0(i,j) = \left\{
\begin{array}{cl}
	0 & \quad ; \quad i=j, \cr 
	1 & \quad ; \quad i\ne j \text{ and } E_{i,j} = 1, \cr
	2 & \quad ; \quad i\ne j \text{ and } E_{i,j} = 0.
\end{array}
\right.
$$	
The distance $d_1(i,j)$ is the length of the shortest edge-path connecting the vertices $i$ and $j$, with each edge having length 1. It is easy to check that both metrics are \sigg-metrics for $G$. For some graphs, such as complete graphs, the two metrics coincide, but in general they are different. 

We apply Theorem \ref{Main theorem stated in the introduction} to these two metrics on various families of graphs in Section \ref{Section with examples and applications of embeddability in PCUT_n}. These two metrics were considered in \cite{DezaLaurentBook} in the context of $\ell_1$-embeddings, see ``Part III: Isometric Embeddings of Graphs''. 
\end{example} 
\sigg-graphs were introduced by Toussaint \cite{Toussaint1, Toussaint2, Toussaint3} as a type of {\em proximity graphs} in the Euclidean plane, with an eye toward applications in pattern recognition and computer vision. 
As the subject evolved, \sigg-graphs were considered in arbitrary metric spaces, with special attention heeded to Euclidean spaces $\mathbb R^m$, equipped with the $\ell_p$-metric $d_p$. Recall that for a choice of $p\in [1,\infty]$, $d_p:\mathbb R^m\times \mathbb R^m\to [0,\infty)$ is defined  as 
$$
d_p(x,y) = \left\{  
\begin{array}{ll}
	\left(|x_1-y_1|^p+ \dots+ |x_m-y_m|^p   \right)^{1/p} & \quad ; \quad p\in [1,\infty), \cr 	
	\max\{|x_1-y_1|, \dots, |x_m-y_m|\} & \quad ; \quad p=\infty. 
\end{array} 
\right. 
$$
The following is a natural question in the study of sphere-of-influence graphs. 
\begin{question} \label{Question about L1-embeddability of simple graphs, as stated in the introduction}
Fix a choice of $p\in [1,\infty]$. Does every connected, simple graph $G=(V_n,E)$ admit a \sigg-metric $d$ such that $(V_n,d)$ isometrically embeds into $(\mathbb R^m, d_p)$ for some $m\ge 1$ (with $m$ being allowed to vary with $G$)? 
\end{question}
The current state of the understanding of this problem is this: 
\begin{itemize}
\item[1.] For $p=\infty$, the answer is 'yes'  by work of Michael and Quint \cite{MichaelQuint1}. They provide a beautifully simple embedding of $G$ into $(\mathbb R^{n-1}, d_\infty)$, by mapping the vertex $i\in V_n$ to the $i$-th row-vector of the $n\times (n-1)$ matrix obtained from $E+2I_n$ by removing its $n$-th column.
\item[2.] For $p\in (1,\infty)$, the answer is 'no'. This stems from the strict convexity of the $\ell_p$-norm $||\cdot||_p$ on $\mathbb R^m$. While all $\ell_p$-norms on Euclidean space are convex, they are only stricly convex for $p\in (1,\infty)$ (not considering the trivial case of $m=1$).  
\item[3.] For $p=1$, the question remains open. In this case the question is equivalent (cf. Theorem \ref{Theorem about the equivalence of ell1-embeddability and belonging to the cut cone}) to the question if every simple graph $G=(V_n,E)$ admits a \sigg-metric $d$ that belongs to the cut cone $CUT_n$. 
\end{itemize}
In future work we shall demonstrate that the $p=1$ version of Question \ref{Question about L1-embeddability of simple graphs, as stated in the introduction} is NP-complete, and hence challenging. Given this, and given the introduction of the pair-cut cone in Section \ref{Subsection on the ell1-embeddability and the pair-cut cone}, it is natural to ask this modified question. 
\begin{question} \label{Question about L1-embeddability of simple graphs, as stated in the introduction, into the pair cut cone}
Let $G=(V_n,E)$ be a connected simple graph. Does $G$ admit a \sigg-metric $d$ that lies in the pair cut cone $PCUT_n$? 
\end{question}
We are able to fully answer this simpler question. 
\begin{theorem} \label{Theorem stated in the introduction with an example of a graph none of whose SIG metrics lie in the pair-cut cone}
For every $n\ge 5$ there exists a connected simple graph $G$ with $n$ vertices, such that no \sigg-metric $d$ of $G$ lies in the pair-cut cone $PCUT_n$. One such example is given by the star graph $S(n)$ from Figure \ref{Star Graph from the introduction}.   
\begin{figure}[h]
\includegraphics[width=6cm]{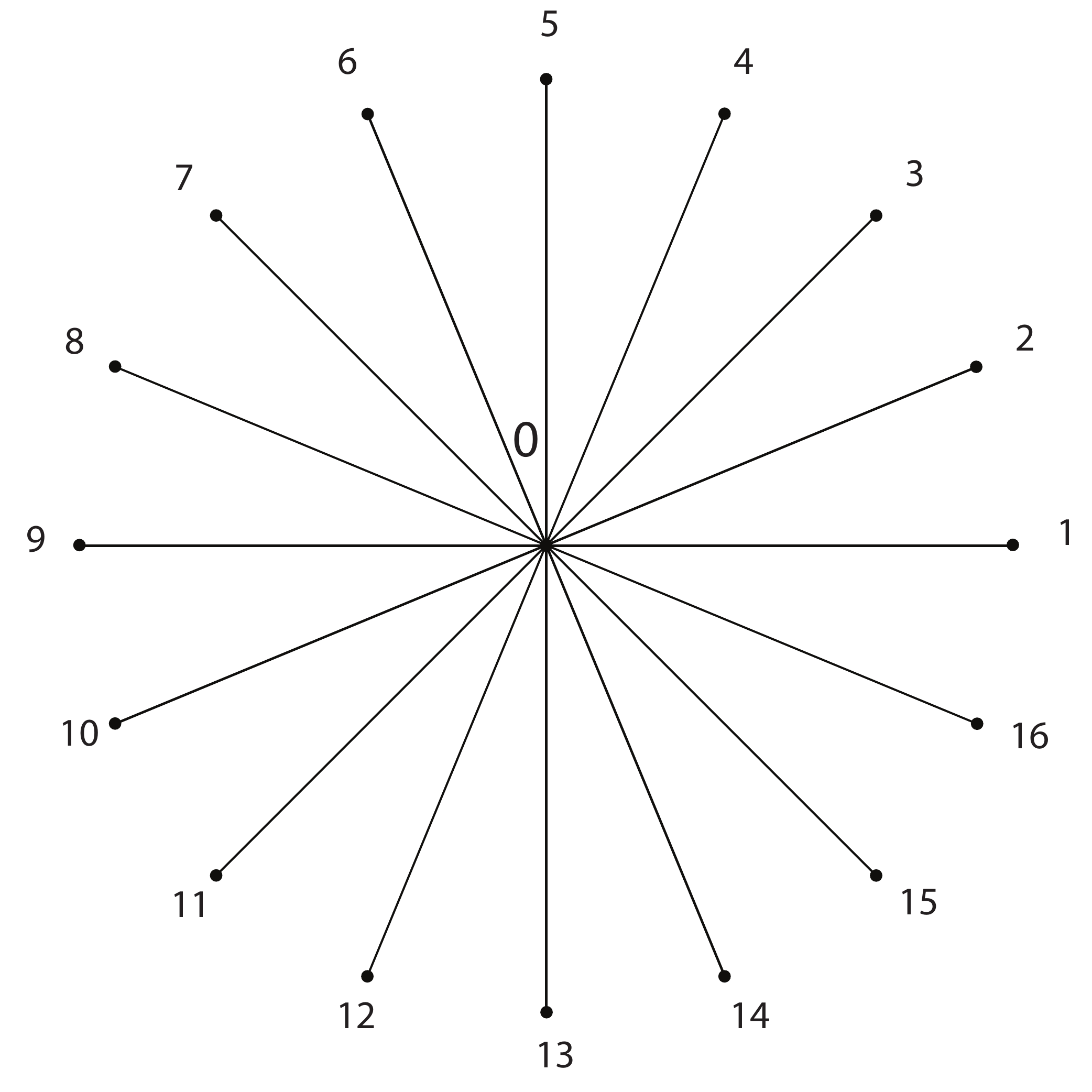}
\caption{The star graph $S(n)$ shown here with $n=16$. The central vertex is labeled $0$ while the peripheral vertices are enumerated by the elements of $V_n=\{1,\dots, n\}$.} \label{Images/StarGraph}
\end{figure} \label{Star Graph from the introduction}
\end{theorem}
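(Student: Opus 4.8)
The plan is to show that the star graph $S(n)$ itself is the desired example for every $n\ge 5$, by first describing all of its \sigg-metrics and then testing them against the necessary condition of Corollary~\ref{Corollary with main ineqaulity for PCUTn membership stated in the introduction}. Write the vertex set of $S(n)$ as $V_n$, with central vertex $0$ and peripheral set $P=V_n\setminus\{0\}$, so that $|P|=n-1\ge 4$, and let $d$ be any \sigg-metric of $S(n)$; put $a_i=d(0,i)>0$ for $i\in P$.

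Step 1 (structure of the \sigg-metrics of $S(n)$). Any two peripheral vertices $i\ne j$ are non-adjacent, so the \sigg-inequalities \eqref{The SIG inequalities from the introduction} yield $d(i,j)\ge r_i+r_j$. I first claim that $r_i=a_i$ for each $i\in P$: trivially $r_i=\min_{k\ne i}d(i,k)\le d(0,i)=a_i$, and if the minimum defining $r_i$ were attained at a peripheral vertex $j$, then from $d(i,j)=r_i$ and the non-edge inequality we would get $r_i\ge r_i+r_j$, forcing $r_j\le 0$, which is impossible for a metric. Hence $r_i=a_i$, so $d(i,j)\ge a_i+a_j$; combined with the triangle inequality $d(i,j)\le d(0,i)+d(0,j)=a_i+a_j$ this gives
\begin{equation*}
d(i,j)=a_i+a_j\qquad\text{for all distinct }i,j\in P .
\end{equation*}
(One checks conversely that every such weighted-star metric is indeed a \sigg-metric of $S(n)$ — the edge inequality $d(0,i)<r_0+r_i$ reduces to $r_0>0$ — but only the displayed identity is needed below.)

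Step 2 (the obstruction). Set $A=\sum_{i\in P}a_i>0$. Summing $d$ over the $n-1$ pairs $\{0,i\}$ and the $\binom{n-1}{2}$ pairs inside $P$, and using the identity above, gives $\text{Tr}(d)=A+(n-2)A=(n-1)A$, while $s_0=\sum_{i\in P}a_i=A$. If $d$ belonged to $PCUT_n$, then inequality \eqref{Secondary inequality on star-traces and traces of metrics in the pair-cut-cone} of Corollary~\ref{Corollary with main ineqaulity for PCUTn membership stated in the introduction}, applied to the central vertex, would read $(n-2)A=(n-2)s_0\ge \text{Tr}(d)=(n-1)A$, i.e.\ $A\le 0$ — a contradiction. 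Therefore no \sigg-metric of $S(n)$ lies in $PCUT_n$.

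I expect Step 1 to be the only step with genuine content: the key point is that the non-adjacency of the leaves, through the \sigg-inequalities together with the triangle inequality, forces the metric to be additive through the hub, after which the center vertex already violates the weak necessary condition of Corollary~\ref{Corollary with main ineqaulity for PCUTn membership stated in the introduction} — so the full characterization of Theorem~\ref{Main theorem stated in the introduction} is not even needed. The remaining care is purely bookkeeping: counting vertices so that $S(n)$ has $n$ of them and one applies $PCUT_n$, and noting that $n\ge 5$ is exactly the hypothesis required by the corollary.
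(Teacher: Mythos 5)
Your proof is correct and follows essentially the same route as the paper: both arguments use the \sigg-inequalities at the non-adjacent leaf pairs together with the triangle inequality to force $d(i,j)=a_i+a_j$, and then contradict the pair-cut membership criteria via the trace computations $\text{Tr}(d)=(n-1)A$ and $s_0=A$. Your minor deviations are all improvements: you actually justify $r_i=a_i$ (the paper asserts it from the leaves having valence $1$, which really does require the non-edge \sigg-inequality you invoke), you close with the single-vertex necessary condition $(n-2)s_0\ge\text{Tr}(d)$ at the hub instead of the pair inequality at $\{0,1\}$, which avoids the paper's WLOG ordering $a_1\le\cdots\le a_n$, and your indexing matches the theorem's statement with $n$ vertices and $PCUT_n$, whereas the paper's proof works with $n+1$ vertices and concludes $d\notin PCUT_{n+1}$.
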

The challenge in proving such a theorem is that a graph $G$ typically admits infinitely many \sigg-metrics, and one must exclude all of them from lying in $PCUT_n$. Our proof plays the inequalities \eqref{Main inequality for the coordinates of wsq to be nonnegative from the introduction} from Theorem \ref{Main theorem stated in the introduction}, and the \sigg-inequalities \eqref{The SIG inequalities from the introduction} against each other, and shows them to be incompatible. 
\subsection{Organization} The remainder of this article is organized as this: In Section \ref{Section with background material} we provide necessary background. Specifically, Section \ref{Subsection on finite metric spaces} provides background on metrics on finite sets, and re-casts the $\ell_1$-embeddability problem in terms of solutions to a linear system. Section \ref{Subsection on sphere of influence graphs} gives relevant facts about sphere-of-influence graphs, the second main theme of this work. Section \ref{Section on the square cut matrix} contains the spectral analysis of the square cut-matrix allueded to above, and proves Theorem \ref{Main theorem stated in the introduction} and Corollary \ref{Corollary with main ineqaulity for PCUTn membership stated in the introduction}. Section \ref{Section with applications of main theorem to simple graphs} gives applications of Theorem \ref{Main theorem stated in the introduction} to Question \ref{Question about L1-embeddability of simple graphs, as stated in the introduction, into the pair cut cone}, and gives a proof of Theorem \ref{Theorem stated in the introduction with an example of a graph none of whose SIG metrics lie in the pair-cut cone}. In Section \ref{Section on the full cut matrix} we concern ourselves with the full cut-matrix and we construct a right inverse for it. This right inverse is the main ingredient for proving Theorem \ref{Second main theorem for the full cut matrix as stated in the introduction}. Lastly, in Section \ref{Section on finding an explicit basis for the full cut-matrix}, we determine an explicit basis for the kernel of the full cut-matrix. We explain how this result can be used to modify the results of Theorem \ref{Second main theorem for the full cut matrix as stated in the introduction} to obtain similar membership theorems for metrics in the full cut cone.

\section{Background} \label{Section with background material}
This section gives background on finite metric spaces and sphere-of-influence graphs. Some of this background was already elucidated in the introduction and we do not repeat it here, but trust that the reader recalls that exposition. 
\subsection{Finite Metric spaces} \label{Subsection on finite metric spaces}
For an integer $n\ge 1$, let $V_n = \{1,\dots, n\}$. By a {\em finite metric space} we shall mean a pair $(V_n,d)$ with $d:V_n\times V_n\to [0,\infty)$ a function subject to three conditions:
\vskip2mm
\begin{tabular}{rll}
(a) & Definiteness: & For $i,j \in V_n$, $d(i,j) =0$  if and only if $i=j$. \cr 
(b) & Symmetry: & For $i,j \in V_n$, $d(i,j) = d(j,i)$. \cr 
(c) & Triangle Inequalities: & \begin{minipage}[t]{95mm}For $i,j,k\in V_n$, the triangle inequality $d(i,j) \le d(i,k) + d(k,j)$ holds.\end{minipage} 
\end{tabular}  
\vskip2mm
\noindent If instead of (a), the weaker condition 
\begin{itemize}
	\item[(a')] For every $i\in V_n$, $d(i,i)=0$,  
\end{itemize}
holds, then we call $d$ a {\em semi-metric} on $V_n$. By abuse of terminology we shall still call the pair $(V_n,d)$ a metric space. 

A metric space {\em $(V_n,d)$ is $\ell_1$-embeddable} if there exists an isometric embedding $\varphi :(V_n,d) \to (\mathbb R^m,d_1)$ for some $m\ge 1$. Specifically, if we write $\varphi(i) = X_i \in \mathbb R^m$, then such an embedding requires that %
$$d_1(X_i, X_j) = ||X_i - X_j||_1=  d(i,j), \qquad \text{ for all } i,j\in V_n.$$
\subsection{The Cut Cone $CUT_n$}
A {\em cut} of $V_n$ is a decomposition of $V_n$ as the disjoint union $C\sqcup (V_n-C)$ for some subset $C\subset V_n$. We intuitively think of $C$ as giving a notion of distance among the points of $V_n$ in the sense that points $i,j \in V_n$ that either both lie in $C$ or both lie in $V_n-C$ are ``close'' to each other (say distance 0 apart), while if $i\in C$ and $j\in V_n-C$, or vice versa, then we think of $i$ and $j$ as some positive distance apart, say distance 1. When decomposing $V_n$ as the disjoint union of $C$ and $V_n-C$, we shall refer to $C$ itself as the cut. Note that the cuts $C$ and $V_n-C$ lead to the same decomposition of $V_n$.  

This interpretation of distance in terms of cuts leads to the definition of the {\em cut-metric $\delta_C$}\footnote{The function $\delta_C$ is only a semi-metric, but we find it easier for the exposition to call them cut-metrics.} on $V_n$ as in \eqref{Definition of cut semi-metric from introduction}, clearly with $\delta_C = \delta_{V_n-C}$. The two {\em trivial cuts}, namely $C=\emptyset$ and $C=V_n$, lead to $\delta_C\equiv 0$ and hence we shall mostly only consider the $2^{n}-2$ non-trivial cuts of $V_n$. 

Recall that we regard the semi-metrics $\delta_C$ as vectors in $\mathbb R^{n\choose 2}$, as explained in the introduction in the paragraph following equation \eqref{Definition of cut semi-metric from introduction}. All of their coordinates are either 0 or 1, so that they are in fact vertices of the hypercube $[0,1]^{n\choose 2}$. We define the  {\em cut polytope $CUT_n^\square$} and the {\em cut cone $CUT_n$} as 
\begin{align}
\text{CUT}_n & = \left\{ \sum_{C\subset V_n} w_C \, \delta_C \,\big|\, w_C\ge 0\right\} \subset \mathbb R^{n\choose 2}, \cr  
& \cr 
\text{CUT}_n^\square & = \left\{ \sum_{C\subset V_n} w_C \, \delta_C \,\big|\, w_C\ge 0, \,\, \sum _{C\subset V_n} w_C = 1\right\} \subset CUT_n \subset \mathbb R^{n\choose 2}.  	
\end{align}  
The cut polytope $CUT^\square_n$ is the convex hull Conv$(X)$ in $\mathbb R^{n\choose 2}$ generated by the finite set $X$ of vertices of the hypercube $[0,1]^{n\choose 2}$ that correspond to cut-metrics $\delta_C$. The cut cone $CUT_n$ is the positive cone of $CUT^\square _n$. Our use of the word {\em polytope} is as the convex hull of a finite subset of Euclidean space, and thus applies in this setting. The cut polytope $CUT_n^\square$ is still not well understood. For example, its {\em facets}, the codimension 1 faces, are only fully known for $n\le 7$ (for $n\le 6$ see \cite{DezaLaurant1}, for $n=7$ see \cite{Grishukhin}), but for larger $n$ they remain elusive. 

\begin{example}
When $n=3$, $CUT^\square_3$ is generated in $[0,1]^3\subset \mathbb R^3$ by the 4 vertices 
$$
\begin{array}{rlrl}
\delta_{\{\}} & \hspace{-3mm}=\delta_{\{1,2,3\}}= (0,0,0), & \qquad  \qquad  \delta_{\{1\}}  & \hspace{-3mm}= \delta_{\{2,3\}}= (1, 1, 0), \cr
\delta_{\{2\}} & \hspace{-3mm}= \delta_{\{1,3\}}   = (1, 0, 1),&   \delta_{\{3\}} &\hspace{-3mm}= \delta_{\{1,2\}} = (0, 1, 1),
\end{array} 
$$
showing $CUT^\square_3$ to be the tetrahedron in Figure \ref{CUT3 polytop}.
\begin{figure}[h]
\includegraphics[width=8cm]{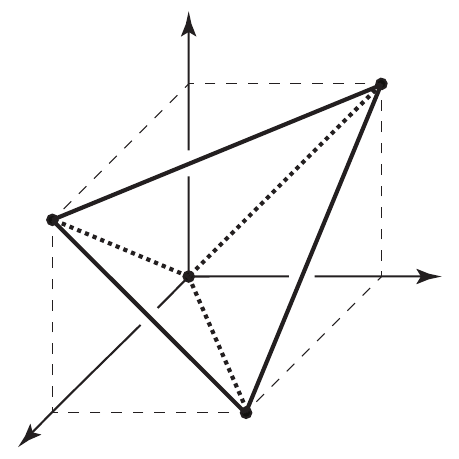}
\put(-100,15){$\delta_{\{2,3\}}$}
\put(-231,120){$\delta_{\{1,3\}}$}
\put(-31,185){$\delta_{\{1,2\}}$}
\put(-125,78){$\delta_{\{\}}$}
\put(-225,12){$x$}
\put(-19,79){$y$}
\put(-129,213){$z$}
\caption{A visualization of $CUT_3^\square$ and $PCUT^\square_3$. The former is the tetrahedron spanned by $\delta_{\{\}}$, $\delta_{\{1,2\}}$, $\delta_{\{1,3\}}$ and $\delta_{\{2,3\}}$, while the latter is the triangle spanned by $\delta_{\{1,2\}}$, $\delta_{\{1,3\}}$ and $\delta_{\{2,3\}}$.} \label{CUT3 polytop}
\end{figure}
\end{example}

The following classical result concerning convex hulls is due to Carath\'eodory. 
\begin{theorem}[Carath\'eodory \cite{Caratheodory}] \label{Caratheodorys Theorem}
Let $X\subset \mathbb R^d$ be a finite set and let $x$ be a point in the convex hull Conv$(X)\subset \mathbb R^d$ of $X$. Then $x$ lies in a $d$-dimensional simplex of $X$. Said differently, there exist $d+1$ linearly independent points $x_0,\dots, x_{d}$ such that 
$$x = \sum_{i=0}^{d} \lambda _i x_i, \qquad \text{ with } \qquad \lambda_i\ge 0 \text{ and } \,\, \sum _{i=0}^{d} \lambda_i =1.$$ 
\end{theorem}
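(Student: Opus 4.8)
The plan is to prove this by the standard \emph{minimal representation} argument, in which one repeatedly reduces the number of points appearing in a convex combination, using an affine dependence among them, until the surviving points are affinely independent.

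First I would invoke the definition of the convex hull to write $x$ as a convex combination $x = \sum_{i=1}^{m} \lambda_i x_i$ of distinct points $x_i \in X$, with each $\lambda_i > 0$ and $\sum_{i=1}^m \lambda_i = 1$; such a representation exists precisely because $x \in \mathrm{Conv}(X)$. Among all such representations of $x$ the number of points used is a positive integer bounded by $|X|$, so I may select a representation for which this number $m$ is as small as possible. The goal is to show that for this minimal representation the points $x_1, \dots, x_m$ are affinely independent, whence $m \le d+1$ since $\mathbb{R}^d$ contains at most $d+1$ affinely independent points.

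Next I would argue by contradiction: suppose $x_1, \dots, x_m$ are affinely dependent. Then there exist scalars $\mu_1, \dots, \mu_m$, not all zero, satisfying simultaneously $\sum_{i=1}^m \mu_i x_i = 0$ and $\sum_{i=1}^m \mu_i = 0$. For a real parameter $t$ I would then consider the perturbed coefficients $\lambda_i(t) = \lambda_i - t\mu_i$. The key observation is that $\sum_i \lambda_i(t)\, x_i = x$ and $\sum_i \lambda_i(t) = 1$ for every $t$, the latter holding exactly because $\sum_i \mu_i = 0$; thus the $\lambda_i(t)$ furnish a candidate representation of $x$ as soon as they are all nonnegative. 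Since the $\mu_i$ are not all zero yet sum to zero, at least one $\mu_i$ is strictly positive, so $t^* = \min_{i : \mu_i > 0} \lambda_i / \mu_i$ is well defined and positive. At $t = t^*$ every coefficient $\lambda_i(t^*)$ remains nonnegative while at least one of them vanishes, yielding a representation of $x$ using strictly fewer than $m$ points of $X$. This contradicts the minimality of $m$, so the points $x_1,\dots,x_m$ must be affinely independent.

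Finally, having shown the minimal representation uses $m \le d+1$ affinely independent points, I would conclude that $x$ lies in the $(m-1)$-dimensional simplex they span, which is a face of a $d$-dimensional simplex of $X$. To match the stated conclusion verbatim with indices $x_0, \dots, x_d$, one pads the affinely independent collection to a maximal one, assigning the newly adjoined points coefficient $0$. I expect the main subtlety to reside not in any computation but in this reduction step: one must work with \emph{affine} rather than linear dependence, and it is exactly the constraint $\sum_i \mu_i = 0$ that guarantees the perturbation preserves the normalization $\sum_i \lambda_i(t) = 1$ while simultaneously being able to drive one coefficient down to zero.
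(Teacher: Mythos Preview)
Your argument is the standard and correct proof of Carath\'eodory's theorem via a minimal convex representation and an affine-dependence reduction. Note, however, that the paper does not supply its own proof of this statement: it is quoted as a classical result with a citation to Carath\'eodory's original work, so there is nothing to compare against. One small caveat on your final padding step: the theorem as stated asks for $d+1$ points forming a $d$-simplex ``of $X$'', but if $X$ happens to lie in a proper affine subspace of $\mathbb{R}^d$ there may not be $d+1$ affinely independent points available in $X$ to pad with; this is a looseness in the theorem's wording rather than a flaw in your reasoning, and the essential conclusion (that $x$ is a convex combination of at most $d+1$ affinely independent points of $X$) is exactly what you establish.
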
 

The connection between $\ell_1$-embeddability of $(V_n,d)$ and the cut cone is the following beautiful result from \cite{Assouad}.
\begin{theorem} \label{Theorem about the equivalence of ell1-embeddability and belonging to the cut cone}
A metric $d$ on $V_n$ is $\ell_1$-embeddable if and only if $d\in CUT_n$. 
\end{theorem}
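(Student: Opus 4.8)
The plan is to prove the two implications separately, in each case reducing to the simplest possible metrics: cut-metrics for one direction, and one-dimensional ``line'' metrics for the other. For the direction $d\in CUT_n \Rightarrow d$ being $\ell_1$-embeddable, I would first note that a single cut-metric $\delta_C$ is realized by a one-dimensional embedding, namely $i\mapsto \mathbf 1_C(i)\in\mathbb R$: the quantity $|\mathbf 1_C(i)-\mathbf 1_C(j)|$ equals $1$ exactly when precisely one of $i,j$ lies in $C$, which is the definition of $\delta_C(i,j)$. Writing $d=\sum_k w_k\,\delta_{C_k}$ with $w_k\ge 0$ over the finitely many non-trivial cuts, I would then concatenate these into a single embedding $\varphi:V_n\to\mathbb R^m$, with $m$ the number of cuts carrying positive weight, by declaring the $k$-th coordinate of $\varphi(i)$ to be $w_k\,\mathbf 1_{C_k}(i)$. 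A one-line computation then gives $\|\varphi(i)-\varphi(j)\|_1=\sum_k w_k\,\delta_{C_k}(i,j)=d(i,j)$, so $d$ is $\ell_1$-embeddable.

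For the converse, suppose $\varphi:V_n\to(\mathbb R^m,d_1)$ is an isometry with $\varphi(i)=X_i=(X_i^1,\dots,X_i^m)$. Then $d(i,j)=\sum_{\ell=1}^m |X_i^\ell-X_j^\ell|$, so that $d=\sum_{\ell=1}^m d_\ell$, where $d_\ell(i,j)=|X_i^\ell-X_j^\ell|$ is the line metric induced by the $\ell$-th coordinate. Because $CUT_n$ is a cone, hence closed under non-negative sums, it suffices to establish the key lemma that every line metric on $V_n$ lies in $CUT_n$; summing over $\ell$ will then place $d$ in $CUT_n$.

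To prove the lemma I would sort the coordinate values: fix a permutation $\pi$ of $V_n$ with $X_{\pi(1)}^\ell\le\cdots\le X_{\pi(n)}^\ell$, and introduce the $n-1$ threshold cuts $C_k=\{\pi(1),\dots,\pi(k)\}$. The assertion is that
$$ d_\ell \;=\; \sum_{k=1}^{n-1}\bigl(X_{\pi(k+1)}^\ell-X_{\pi(k)}^\ell\bigr)\,\delta_{C_k}, $$
and all coefficients are non-negative precisely because the values have been sorted. To verify the identity I would evaluate both sides on an arbitrary pair $\{\pi(a),\pi(b)\}$ with $a<b$: the threshold cut $C_k$ separates this pair exactly when $a\le k<b$, so the right-hand side telescopes to $X_{\pi(b)}^\ell-X_{\pi(a)}^\ell=|X_{\pi(a)}^\ell-X_{\pi(b)}^\ell|$, which matches $d_\ell$.

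I do not expect a serious obstacle, as this is a classical equivalence; the only points needing care are ensuring that all decomposition coefficients remain non-negative, which is exactly why the threshold cuts must be taken with respect to the sorted ordering, and recording that $CUT_n$, being spanned by the finitely many cut-metrics $\delta_C$, is automatically closed under the non-negative combinations produced above, so that no limiting or Carath\'eodory-type argument is required to conclude membership in the cone.
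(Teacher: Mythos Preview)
Your proposal is correct and matches the paper's approach. For the direction $d\in CUT_n\Rightarrow d$ is $\ell_1$-embeddable, your embedding $\varphi(i)_k=w_k\,\mathbf 1_{C_k}(i)$ is exactly the one the paper writes down; for the converse the paper merely cites Proposition~4.2.2 of Deza--Laurent, and your threshold-cut decomposition of each coordinate line metric is precisely that standard argument, so you have simply supplied the details the paper leaves to the reference.
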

Indeed, if $d\in CUT_n$ and $d = \sum_{i=1}^m  w_i\, \delta_{C_i}$ for some $w_i\ge 0$ and for some cuts $C_1, \dots, C_m\subset V_n$ (with $m\le {n\choose 2} $), then an $\ell_1$-embedding of $(V_n,d)$ into $\left( \mathbb R^m, d_1\right)$ is given by
$$i\mapsto X_i \quad \text{ with } \quad (X_i)_j=\left\{ 
\begin{array}{cl}
w_j & \quad ; \quad i \in C_j, \cr 
0 & \quad ; \quad i \notin C_j. 	
\end{array} 
\right.  $$
It is easy to verify that $||X_i-X_j||_1 = d(i,j)$ for all $i,j \in V_n$. The converse statement is also not hard to prove, see Proposition 4.2.2 in \cite{DezaLaurentBook}.

This result reduces the question of $\ell_1$-embeddability to that of verifying membership of a given metric $d$ in the cut cone $CUT_n$. This, however, is not an easy problem.  
\begin{theorem} [Avis-Deza \cite{AvisDeza}] The problem of determining if a metric $d$ on $V_n$ belongs to the cut cone $CUT_n$ is NP-complete. 
\end{theorem}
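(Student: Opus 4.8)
The plan is to prove the two defining features of NP-completeness in turn: that the decision problem ``$d\in CUT_n$?'' lies in NP, and that it is NP-hard, the latter by reduction from the maximum cut problem. The conceptual bridge to max-cut is the description of the polar cone $CUT_n^\ast=\{y\in\mathbb R^{n\choose 2}: \langle y,\delta_C\rangle\ge 0 \text{ for all cuts }C\}$ together with the identity
\[
\min_{C\subset V_n}\langle y,\delta_C\rangle \;=\; \min_{C\subset V_n}\sum_{i\in C,\,j\notin C}y_{ij}\;=\;-\max_{C\subset V_n}\sum_{i\in C,\,j\notin C}(-y_{ij}),
\]
which exhibits membership $y\in CUT_n^\ast$ as the assertion that the minimum-weight cut for the (signed) weights $y$ is nonnegative --- equivalently, a max-cut decision. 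Since max-cut with arbitrary weights is NP-hard, recognizing the polar cone is hard, and by duality $d\in CUT_n$ is governed by $CUT_n^\ast$ through $d\in CUT_n\iff \langle d,y\rangle\ge 0$ for all $y\in CUT_n^\ast$.

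First I would place the problem in NP. If $d\in CUT_n$, then Carath\'eodory's theorem in its conic form (Theorem~\ref{Caratheodorys Theorem}, as already used after Theorem~\ref{Theorem about the equivalence of ell1-embeddability and belonging to the cut cone}) yields a representation $d=\sum_{k=1}^m w_k\,\delta_{C_k}$ with $m\le{n\choose 2}$ and $w_k\ge 0$. I take the list of cuts $C_1,\dots,C_m$ together with the weights $w_k$ as the certificate; verification only checks the ${n\choose 2}$ linear equalities $d=\sum_k w_k\delta_{C_k}$ and the sign conditions $w_k\ge 0$. The $w_k$ arise as a basic solution of a linear system with $0/1$ coefficient matrix, so by Hadamard's bound their denominators, and hence their bit-sizes, are polynomial in the encoding of $d$; the certificate is therefore polynomially bounded and polynomially checkable. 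It is worth recording the asymmetry that makes NP-completeness plausible: a naive ``no''-certificate would be a separating vector $y\in CUT_n^\ast$ with $\langle y,d\rangle<0$, but confirming $y\in CUT_n^\ast$ is itself a max-cut feasibility test and so is not evidently polynomial.

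For NP-hardness I would give a reduction from max-cut with nonnegative integer weights $c$ and threshold $K$. The obstruction to a naive reduction is that the cut vectors $\delta_C$ do not share a common $\ell_1$-norm --- indeed $\|\delta_C\|_1=|C|(n-|C|)$ --- so the homogeneous cone $CUT_n$ cannot ``see'' a fixed threshold through a single additive shift of the weights. I would remove this by homogenizing with a distinguished root vertex $0$, passing to $V_{n+1}$: cuts of $V_{n+1}$ correspond to subsets $C\subseteq V_n$, and the coordinates incident to $0$ supply the normalization that a fixed threshold requires. Using this covariance correspondence I would construct, from $(c,K)$, a rational metric $d_K$ on $V_{n+1}$ whose membership in $CUT_{n+1}$ toggles exactly as max-cut crosses $K$, so that $d_K\in CUT_{n+1}$ if and only if no cut of weight $\ge K$ exists. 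A correct oracle for cut-cone membership would then decide max-cut, and combined with the NP membership above this establishes NP-completeness. Alternatively, intersecting $CUT_n$ with a bounding box to meet the technical hypotheses of the Gr\"otschel--Lov\'asz--Schrijver framework, one obtains the same conclusion through the polynomial equivalence of separation and optimization, at the cost of a Turing rather than a many-one reduction.

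The hard part, and the step deserving the most care, is making the homogenizing construction a clean many-one reduction: I must (i) choose the root coordinates and the normalization constant so that cut-cone membership of $d_K$ encodes the threshold $K$ exactly, (ii) verify that $d_K$ actually satisfies the triangle inequalities so that it is a legal metric instance, and (iii) bound all denominators so that $d_K$ has polynomial encoding length. Each is a finite computation with the covariance map, but arranging the normalization and the triangle inequalities to hold simultaneously, while keeping the numbers polynomial, is the delicate point; the Gr\"otschel--Lov\'asz--Schrijver route sidesteps the exact threshold encoding at the cost of yielding only a Turing reduction, which still suffices for NP-hardness.
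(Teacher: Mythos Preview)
The paper does not prove this theorem; it is quoted as a result of Avis--Deza \cite{AvisDeza} and no argument is given. So there is no ``paper's own proof'' to compare against, and your proposal should be read as an attempted reconstruction of the literature result.

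Your NP-membership argument via conic Carath\'eodory is standard and correct. The NP-hardness sketch, however, has a genuine gap in the direction of the reduction. You write that the constructed metric satisfies ``$d_K\in CUT_{n+1}$ if and only if \emph{no} cut of weight $\ge K$ exists.'' That is a many-one reduction from the \emph{complement} of max-cut to cut-cone membership, i.e.\ it shows the membership problem is co-NP-hard. Combined with your (correct) NP-membership, this would force $\mathrm{NP}=\mathrm{co\text{-}NP}$. So either the biconditional must go the other way, or the reduction must start from a different source problem; as stated, the argument cannot close.

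The underlying intuition---linking $CUT_n^\ast$ to max-cut and using a root vertex / covariance map to homogenize---is the right toolbox, but it more naturally yields hardness of the \emph{separation} problem for $CUT_n$ (equivalently, of recognizing $CUT_n^\ast$), not of primal membership. Your GLS alternative inherits the same issue and in any case gives only a Turing reduction, which does not by itself establish NP-completeness in the Karp sense claimed. The actual Avis--Deza argument is a direct Karp reduction (not via polarity or GLS) producing a metric whose $\ell_1$-embeddability is equivalent to a \textsc{Yes} instance of an NP-complete problem; getting that direction right, and verifying the triangle inequalities for the constructed instance, is exactly the ``hard part'' you flag, and your sketch does not yet pin it down.
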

\subsection{Sphere-of-Influence Graphs} \label{Subsection on sphere of influence graphs}
We carry forward here the defintion and symbols introduced in Section \ref{Section in introduction on SIGs} regarding sphere-of-influence graphs or \sigs for short. In that section we started with a simple, connected graph $G = (V_n,E)$ and  showed that there always exist metrics $d$ on $V_n$ whose associated \sig is $G$. 

Conversely, if we begin with a metric $d$ on $V_n$, we can form its sphere-of-influence graph $G$ whose vertices are $V_n$ and with edge matrix $E = [E_{i,j}]$ determined by 
$$ E_{i,j} = \left\{
\begin{array}{cl}
	0 & \quad ; \quad i=j, \cr 
1 & \quad ; \quad i\ne j \text{ and } d(i,j)<r_i+r_j, \cr 
0 & \quad ; \quad  i\ne j \text{ and } d(i,j)\ge r_i+r_j. 
\end{array} 
\right. $$ 
As before, $r_i = \min _{j\ne i} d(i,j)$. By construction, $G$ is a simple graph though it may or may not be connected. 6
\vskip3mm
Before proceeding, we briefly return to Question \ref{Question about L1-embeddability of simple graphs, as stated in the introduction} from the introduction. In the paragraph following it, we showed how every simple graph $G=(V_n,E)$ can be given a \sigg-metric $d$ such that $(V_n,D)$ embeds into $(\mathbb R^{n-1},d_\infty)$. We also stated that such metrics do not exist in general if $p\in (1,\infty)$, and we show here by example why this is. This result is well known to the experts, we only include it for completeness. 

\begin{theorem}
For any $p\in (1,\infty)$ there exist connected, simple graphs $G$ that do not admit an $\ell_p$-\sig embedding into any Euclidean space. 
\end{theorem}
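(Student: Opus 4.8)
The plan is to show that the claw $K_{1,3}$ — the tree with one central vertex $O$ joined to three leaves $A,B,C$ and no other edges — is a sphere-of-influence graph in some metric space (by the $\ell_\infty$-result of Michael and Quint quoted above, it is a \sigg-graph in $(\mathbb R^3,d_\infty)$), and yet admits no \sigg-metric that is isometrically realizable in any $(\mathbb R^m,d_p)$ with $p\in(1,\infty)$. Suppose toward a contradiction that $d$ is a \sigg-metric for $K_{1,3}$ on $V_4=\{O,A,B,C\}$ and that $\varphi\colon(V_4,d)\to(\mathbb R^m,d_p)$ is an isometric embedding. After a translation we may assume $\varphi(O)=0$, and we put $X_A=\varphi(A)$, $X_B=\varphi(B)$, $X_C=\varphi(C)$; these vectors are nonzero because $d$ is definite. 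Since the \sigg-structure depends only on pairwise distances and $(\mathbb R^1,d_p)$ embeds isometrically into $(\mathbb R^2,d_p)$, we may further assume $m\ge 2$, so that the norm $\|\cdot\|_p$ is strictly convex.

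The first step is to pin down the radii of influence of the leaves. The edges of $K_{1,3}$ are $OA,OB,OC$, while $AB,AC,BC$ are non-edges, so the \sigg-inequalities \eqref{The SIG inequalities from the introduction} give $d(A,B)\ge r_A+r_B$ and $d(A,C)\ge r_A+r_C$. As $r_B,r_C>0$, this yields $d(A,B)>r_A$ and $d(A,C)>r_A$; since $r_A=\min\{d(A,O),d(A,B),d(A,C)\}$ and two of these three terms strictly exceed $r_A$, we conclude $r_A=d(A,O)=\|X_A\|_p$, and symmetrically $r_B=\|X_B\|_p$ and $r_C=\|X_C\|_p$.

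The second step plays the non-edge \sigg-inequalities against the triangle inequality. For the pair $AB$,
\[
\|X_A-X_B\|_p=d(A,B)\ \ge\ r_A+r_B=\|X_A\|_p+\|X_B\|_p\ \ge\ \|X_A-X_B\|_p,
\]
so equality holds throughout. In a strictly convex normed space, $\|u-v\|=\|u\|+\|v\|$ with $u,v\neq 0$ forces $v$ to be a negative multiple of $u$; applying this to the three pairs of leaves produces positive scalars with $X_B=-\mu_{AB}X_A$, $X_C=-\mu_{AC}X_A$, and $X_C=-\mu_{BC}X_B$. Substituting the first relation into the third gives $X_C=\mu_{AB}\mu_{BC}\,X_A$, a positive multiple of the nonzero vector $X_A$, contradicting $X_C=-\mu_{AC}X_A$. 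Hence no such embedding exists, and since $K_{1,3}$ is connected and simple, the theorem follows. The same three steps work verbatim for any larger star $K_{1,n}$ with $n\ge 3$.

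I do not expect a genuine obstacle here. The two places that call for a little care are the reduction to the strictly convex regime $m\ge 2$ (recall that $\ell_p$ on $\mathbb R^m$ is strictly convex precisely for $1<p<\infty$ with $m\ge 2$) and the verification in the first step that each leaf's radius of influence is realized by its edge to $O$ and not by some shorter distance to another leaf; one should also cite the standard facts that $\ell_p$-norms are strictly convex for $1<p<\infty$ and that equality in the triangle inequality there forces proportionality of the summands.
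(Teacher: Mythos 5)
Your proof is correct and follows essentially the same route as the paper: both use the claw $K_{1,3}=S(3)$, pin down the leaf radii, play the non-edge \sigg-inequalities against the triangle inequality to force $d(A,B)=d(A,O)+d(O,B)$ for each pair of leaves, and then invoke strict convexity of $\|\cdot\|_p$ for $p\in(1,\infty)$ to reach a contradiction. You are somewhat more explicit than the paper in two spots — deducing $r_A=d(A,O)$ from the \sigg-inequalities rather than just from the leaves having valence $1$, and deriving the final contradiction via the sign of the proportionality constants rather than the paper's appeal to all four points being collinear — but these are refinements of the same argument, not a different one.
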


The proof is simple, and a single graph provides a counterexample for all $p\in (1,\infty)$. Recall that if $p\in (1,\infty)$, then $d_p(a,c) = d_p(a,b)+d_p(b,c)$ for points $a,b,c \in \mathbb R^m$ implies that $a, b, c$ are collinear. This fails for $p=1, \infty$ owing to the fact that the unit ball with respect to $d_p$ is strictly convex if and only if $p\in (1,\infty)$, see \cite{Hardy} (Section 2.11) or \cite{Rudin} (Chapter 3). 
\begin{example} 
Consider the star graph $S(3)$ from Figure \ref{GraphY}. Suppose $d$ is a \sigg-metric on $W_4=\{0\}\cup V_3$ with $a_i =  d(i,0)$ for $i=1,2,3$. Since each of the vertices $1,2,3$ have valence 1, it follows that $r_i=a_i$, $i=1,2,3$. 
\begin{figure}[h]
	\includegraphics[width=6cm]{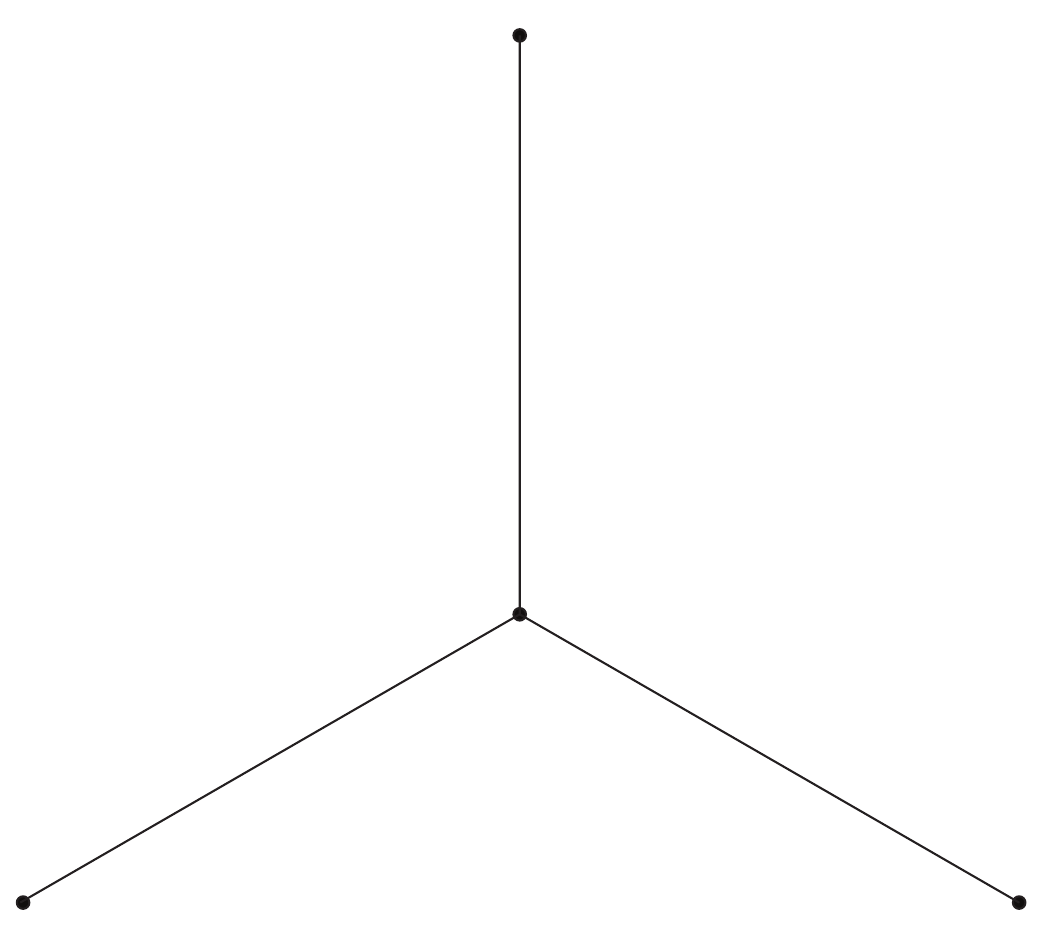}
	\put(-88,155){$3$}
	\put(-178,-3){$1$}
	\put(3,-3){$2$}
	\put(-89,38){$0$}
	\put(-140,33){$a_1$}
	\put(-42,33){$a_2$}
	\put(-82,100){$a_3$}
	\caption{The star graph $S(3)$. } \label{GraphY}
\end{figure}
The \sigg-inequalities \eqref{The SIG inequalities from the introduction} applied to pairs of vertices that don't share an edge, along with the triangle inequality, imply that 
$$d(1,2)\ge r_1+r_2 = a_1+a_2 = d(1,0) + d(0,2) \ge d(1,2), $$
showing that $d(1,2) = a_1+a_2 = d(1,0)+d(0,2)$. The same argument shows that $d(1,3) = d(1,0) + d(0,3)$ and $d(2,3) = d(2,0) + d(0,3)$. If $S(3)$ admitted a \sigg-embedding into $(\mathbb R^m, d_p)$ for some $p\in (1,\infty)$, then each of the triples of points $\{1,2,0\}$, $\{1,3,0\}$ and $\{2,3,0\}$ would be collinear, and therefore all 4 points would be collinear, a contradiction.
\end{example} 
As the question of whether all simple graphs admit \sigg-metrics that lie in the cut cone $CUT_n$ remains open, we instead turn to investigating membership of \sigg-metrics in the pair-cut cone $PCUT_n$. 
\section{The Square Cut-Matrix and the Pair-Cut Cone $PCUT_n$} \label{Section on the square cut matrix}
This section introduces the main objects of our study - the {\em pair-cut cone $PCUT_n$} and the {\em square cut-matrix $S_{sq}$}. The culminating results are an explicit spectral decomposition of $S_{sq}$ (Section \ref{Section on the spectral decomposition of the square cut-matrix}) and a complete characterization of a metric $d$ on $V_n$ belonging to $PCUT_n$ (Theorem \ref{Main inequalitis on star-traces and traces of metrics in the pair-cut-cone}). 
\subsection{The pair-cut cone}
Recall that every metric $d\in CUT_n$ is a linear combination 
$$ d = \sum_{i=1}^m w_i\,  \delta_{C_i}, $$
for some cuts $C_1,\dots, C_m$ of $V_n$ and some coefficients $w_i\ge 0$. By Carath\'eodory's Theorem \ref{Caratheodorys Theorem}, we may assume that $m\le {n \choose 2}$ though we cannot know which cuts $C_1,\dots, C_m$ are used for a given metric $d$. 

In this section we make what we see as a natural choice for both $m$ and the cuts $C_1, \dots, C_m$. Namely, we choose to consider:
\begin{itemize}
\item[(i)] $m = {n \choose 2}$.
\item[(ii)] The cuts corresponding to the ${n \choose 2}$ pairs $\{i,j\}\subset V_n$, referred to as {\em pair-cuts}, and which we order lexicographically. We shall label them either as pairs $\{i,j\}$ or as cuts $C_1, C_2, \dots$ where the index refers to their ordinal in the lexicographic ordering. Thus for example 
$$C_1 =\{1,2\}, \,  C_2 = \{1,3\}, \dots , C_{{n\choose 2}} = 
\{n-1,n\}.$$ 
\end{itemize}
With these choices, we shall study metrics $d$ on $V_n$ of the form 
\begin{equation} \label{A pair cut metric}
	d = \sum_{\{i,j\} \subset V_n} w_{\{i,j\}}\cdot \delta_{\{i,j\}}, \qquad w_{\{i,j\}} \ge 0. 
\end{equation}   
Let $PCUT_n^\square$ be the {\em pair-cut polytope}, that is the convex hull of all the pair-cut metrics $\delta_{\{i,j\}}$, and let $PCUT_n$ denote the {\em pair-cut cone}, which is the positive cone on $PCUT^\square _n$. Thus, $d\in PCUT_n$ if and only if \eqref{A pair cut metric} has a solution. With this understood, the main question we address here is this:
\begin{question} \label{Question about a metric belonging to the pair-cut cone}
Under what conditions does a metric $d$ on $V_n$ belong to the pair-cut cone $PCUT_n$? 	
\end{question} 
\subsection{The square cut-matrix}
\noindent We start by rewriting the system \eqref{A pair cut metric} in matrix form. Let 
$$w = [w_{\{1,2\}}, \dots, w_{\{n-1,n\}}]^\tau \in \mathbb R^{n\choose 2}.$$
For a fixed $n\ge 3$, which we drop from the notation, we define the  {\em square cut-matrix $S_{sq}$} to be the ${n \choose 2}\times {n \choose 2}$ square matrix whose columns are the vectors $\delta_{\{1,2\}}, \delta_{\{1,3\}},\dots, \delta_{\{n-1,n\}}$, so that the system \eqref{A pair cut metric} becomes 
\begin{equation} \label{A pair cut metric via matrices}   
S_{sq}\cdot w = d, \quad w\ge 0.
\end{equation} 
Here and elsewhere, writing $w\ge 0$ means that $w$ is a vector all of whose coordinates are non-negative. If we view both the rows and columns of $S_{sq}$ enumerated by the ${n \choose 2}$ pairs in $V_n$ with respect to the lexicographic ordering, then the entry in row $k$ and column $\ell$ of $S_{sq}$ is $\delta_{\{a,b\}}(\{i,j\})$ if $\{a,b\}$ and $\{i,j\}$ are the $k$-th and $\ell$-th pair respectively. In particular, this shows that $S_{sq}$ is a symmetric matrix, and thus has real eigenvalues. 
\begin{example} For $n=4$ the lexicographic ordering of pairs in $V_4=\{1,2,3,4\}$ is $\{1,2\}, \{1,3\}, \{1,4\}, \{2,3\}, \{2,4\}, \{3,4\}$, leading to 
$$S_{sq} = \begin{bmatrix} 0 & 1 & 1 & 1 & 1 & 0  \\ 
1 & 0 & 1 & 1 & 0 & 1 \\
1 & 1 & 0 & 0 & 1 & 1 \\
1 & 1 & 0 & 0 & 1 & 1 \\
1 & 0 & 1 & 1 & 0 & 1 \\
0 & 1 & 1 & 1 & 1 & 0 
\end{bmatrix}.  
$$
\end{example} 
\noindent The eigenvalues of $S_{sq}$, as it turns out, are well known. 
\begin{theorem} \label{Full rank of the cut matrix}
For $n\ge 3$ the eigenvalues of the matrix $S_{sq}$ and their multiplicities, are:  
\begin{itemize}
\item[(i)] $2n-4$ with multiplicity 1, 
\item[(ii)] $n-4$ with multiplicity $n-1$,
\item[(iii)] $-2$ with multiplicity $\frac{1}{2}n(n-3)$. 
\end{itemize}
Accordingly, for $n\ge 5$, $S_{sq}$ is a regular matrix. 
\end{theorem}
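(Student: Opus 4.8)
The plan is to identify the matrix $S_{sq}$ with a classical combinatorial operator and read off its spectrum. Observe first that $S_{sq}$ is, up to relabeling, the adjacency-type matrix of the \emph{triangular graph} (also called the Johnson scheme $J(n,2)$, or line graph of $K_n$): its rows and columns are indexed by the $2$-element subsets of $V_n$, and the $(\{a,b\},\{i,j\})$-entry is $\delta_{\{a,b\}}(\{i,j\})$, which equals $1$ precisely when $|\{a,b\}\cap\{i,j\}|=1$ and $0$ when the pairs are equal or disjoint. Thus $S_{sq}=A(T(n))$, the adjacency matrix of the triangular graph $T(n)=L(K_n)$, whose two-class association scheme structure gives exactly three eigenvalues. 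The cleanest self-contained route, though, is to write $S_{sq}$ in terms of the vertex-edge incidence matrix: let $N$ be the $n\times{n\choose 2}$ incidence matrix of $K_n$, with $N_{i,\{a,b\}}=1$ iff $i\in\{a,b\}$. Then a direct computation gives $N^\tau N = 2I + S_{sq}$, since the $(\{a,b\},\{i,j\})$-entry of $N^\tau N$ counts $|\{a,b\}\cap\{i,j\}|$, which is $2$ on the diagonal and equals $S_{sq}$ off the diagonal (it is $1$ for adjacent pairs and $0$ for disjoint pairs). Hence $S_{sq}=N^\tau N-2I$, and everything reduces to finding the eigenvalues of $N^\tau N$.

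Next I would compute $NN^\tau$, the $n\times n$ matrix whose $(i,k)$-entry counts the number of edges through both $i$ and $k$: this is $n-1$ on the diagonal and $1$ off the diagonal, so $NN^\tau=(n-2)I_n+J_n$ where $J_n$ is the all-ones $n\times n$ matrix. The eigenvalues of $(n-2)I_n+J_n$ are $n-2+n=2n-2$ with multiplicity $1$ (eigenvector the all-ones vector) and $n-2$ with multiplicity $n-1$. Since $NN^\tau$ and $N^\tau N$ have the same nonzero eigenvalues with the same multiplicities, $N^\tau N$ has eigenvalue $2n-2$ with multiplicity $1$, eigenvalue $n-2$ with multiplicity $n-1$, and — because ${n\choose 2}\ge n$ for $n\ge 3$ — eigenvalue $0$ with multiplicity ${n\choose 2}-n=\tfrac12 n(n-3)$ (the rank of $N$ is $n$ for $n\ge 2$, which one checks from $NN^\tau$ being invertible). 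Subtracting $2I$ then shifts these to $2n-4$ (multiplicity $1$), $n-4$ (multiplicity $n-1$), and $-2$ (multiplicity $\tfrac12 n(n-3)$), which is exactly the claimed list; the multiplicities sum to ${n\choose 2}$ as required. Finally, for $n\ge 5$ none of $2n-4$, $n-4$, $-2$ is zero, so $S_{sq}$ has no kernel and is invertible, i.e. regular.

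The only mild subtlety — and the step I would be most careful about — is justifying the rank claim $\operatorname{rank}(N)=n$, equivalently that $0$ really does appear with the stated multiplicity in $N^\tau N$ and not more or fewer times; this follows immediately from the invertibility of $NN^\tau=(n-2)I_n+J_n$ for $n\ge 3$, so it is not a genuine obstacle, merely a point that should be stated rather than skipped. One could alternatively cite the well-known spectrum of the triangular graph $T(n)$ directly (its eigenvalues are $2(n-2)$, $n-4$, $-2$ with the stated multiplicities), but presenting the incidence-matrix computation makes the proof self-contained and explains where the numbers come from. I would also remark, for later use in the spectral decomposition of Section~\ref{Section on the spectral decomposition of the square cut-matrix}, that the $2n-4$ eigenspace is spanned by $N^\tau$ applied to the all-ones vector, i.e. by the vector whose $\{i,j\}$-coordinate is $1$ for all pairs, and that the $n-4$ eigenspace is $N^\tau$ applied to the hyperplane $\{x\in\mathbb R^n:\sum x_i=0\}$ — these explicit eigenvectors are what make the translation into the inequality \eqref{Main inequality for the coordinates of wsq to be nonnegative from the introduction} possible.
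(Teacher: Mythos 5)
Your proof is correct, and every step checks out: $N^\tau N=2I+S_{sq}$, $NN^\tau=(n-2)I_n+J_n$ with spectrum $\{2n-2,\ (n-2)^{(n-1)}\}$, the transfer of nonzero eigenvalues with multiplicities from $NN^\tau$ to $N^\tau N$, the rank argument pinning the multiplicity of $0$ at ${n\choose 2}-n=\tfrac12 n(n-3)$, and the shift by $-2I$. The route differs from the paper's in an instructive way: the paper does not actually prove this theorem but cites the known spectrum of the line graph $L(K_n)$ (Cvetkovi\'c et al., Theorem 4.17), and only \emph{afterwards} records the incidence-matrix identities $B^\tau B=2I+A$ and $BB^\tau=(n-2)I_n+J_n$ in \eqref{Equations for the edge-vertex matrix B} in order to build the orthogonal projectors $P_\lambda$. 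You use those same two identities to \emph{derive} the spectrum, which makes the statement self-contained; and your closing remark identifying $E_{2n-4}$ with the span of $B^\tau\mathbf{1}_n$ and $E_{n-4}$ with $B^\tau$ of the hyperplane $\sum x_i=0$ essentially anticipates the content of Proposition \ref{The orthogonal projectors of the pair-cut matrix}, so your argument buys both the theorem and a head start on the spectral decomposition at no extra cost. The only blemish is the parenthetical claim that $\operatorname{rank}(N)=n$ for $n\ge 2$: at $n=2$ one has $NN^\tau=J_2$, which is singular, so the invertibility argument (and the claim) only holds for $n\ge 3$ — exactly the range of the theorem, so nothing is lost, but the clause should read $n\ge 3$.
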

The eigenvalues of $S_{sq}$ have been extensively studied in the literature, for example  in \cite{Cvetkovic2} (cf. Theorem 4.17). Recall that spectral graph theory is the study of graphs by means of the eigenvalues of their adjacency matrices, and $S_{sq}$ equals the adjacency matrix $A = A_n$ of the line graph $L(K_n)$ of the complete graph $K_n$ on $n$ vertices. As such it has received much attention in the literature. 
The line graph $L(K_n)$ is the graph whose vertices are in one-to-one correspondence with the edges of $K_n$, and two vertices $e_1, e_2$ of $L(K_n)$ (with $e_1, e_2$ being edges of $K_n$) share an edge in $L(K_n)$ if and only if $e_1$ and $e_2$ share a vertex in $K_n$. Since $K_n$ has ${n\choose 2}$ edges, then $L(K_n)$ has ${n\choose 2}$ vertices, which we shall label with unordered pairs $\{i,j\}\subset V_n$ (corresponding to the unique edge in $K_n$ connecting the vertices $i$ and $j$). 

Let $B=B_n$ be the {\em vertex-edge incidence matrix of $K_n$}. This is a matrix of size $n\times {n\choose 2}$, whose $(k,\ell)$-th entry equals 1 if the $k$-th vertex of $K_n$ is an endpoint of the $\ell$-th edge, otherwise the $(k,\ell)$-th entry equals 0. This matrix too has received considerable attention in spectral graph theory. Specifically, the following identities are well known, but can also easily be verified explicitly:
\begin{align} \label{Equations for the edge-vertex matrix B}
B^\tau B & = 2I_{n\choose 2} + A \cr 
B B^\tau  & = (n-2)I_n + J_n,
\end{align}
Here $I_n$ is the $n\times n$ identity matrix, while $J_n$ is the $n\times n$ matrix all of whose entries equal 1. We shall use these relations to obtain an explicit spectral decomposition of $A=S_{sq}$. 
\subsection{The spectral decomposition of $S_{sq}$}
In this section we explicitly identify the eigenspaces $E_\lambda$ of $A=S_{sq}$, and the orthogonal projectors onto each eigenspace. Recall that the eigenvalues of $A$ and their multiplicities were listed in Theorem \ref{Full rank of the cut matrix}. We assume that $n\ge 5$ so that $A$ is a regular matrix.  
We begin by observing that 
$$v\in \kerr(B) = \kerr(B^\tau B) \text{ if and only if } Av = -2v \quad (\text{by } \eqref{Equations for the edge-vertex matrix B})$$
and thus $\kerr(B) = E_{-2}.$
By Theorem \ref{Full rank of the cut matrix}, $\dim \kerr(B) = \frac{1}{2}n(n-3)$. Since eigenspaces of a symmetric matrix belonging to distinct eigenvalues are orthogonal to each other, we find that  
$$E_{2n-4}\oplus E_{n-4} = E_{-2}^\perp = (\kerr(B))^\perp = \text{Col}(B^\tau),$$
where Col$(B^\tau)$ is the column space of $B^\tau$. By uniqueness of the orthogonal complement, it follows that $\text{Col}(B^\tau) = E_{2n-4}\oplus E_{n-4}$, leading to this orthogonal decomposition: 
\begin{equation} \label{Decomposition into kernel of B and column space of B transpose}
\mathbb R^{n\choose 2} = \kerr(B) \oplus \text{Col}(B^\tau) = E_{-2}\oplus \left(E_{2n-4}\oplus E_{n-4} \right).
\end{equation} 
For an eigenvalue $\lambda $ of $A$, let $P_\lambda :\mathbb R^{n\choose 2} \to \mathbb  R^{n\choose 2}$ be the orthogonal projector onto $E_\lambda$. Recall that in general, an operator $P :\mathbb R^{n\choose 2} \to \mathbb R^{n\choose 2}$ is an orthogonal projector (onto $\imm(P)$) if and only if $P^2 = P = P^\ast$, where $P^\ast$ is the adjoint of $P$ and is represented by the transpose of the matrix that represents $P$. We turn to determining $P_{2n-4}$, $P_{n-4}$ and $P_{-2}$ next. 

\begin{lemma}
The orthogonal projector $P_{\text{Col}}:\mathbb R^{n\choose 2} \to R^{n\choose 2}$ onto Col$(B^\tau)$, is given by 
$$P_{\text{Col}}=B^\tau (BB^\tau)^{-1}B.$$
\end{lemma}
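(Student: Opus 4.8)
The plan is to verify directly that the operator $P:=B^\tau(BB^\tau)^{-1}B$ satisfies the two defining properties of an orthogonal projector, namely $P^2=P=P^\ast$, and that its image is exactly $\mathrm{Col}(B^\tau)$. The matrix $BB^\tau=(n-2)I_n+J_n$ from \eqref{Equations for the edge-vertex matrix B} is invertible for $n\ge 2$ (its eigenvalues are $2n-2$ on the span of the all-ones vector and $n-2$ on its orthogonal complement), so $P$ is well defined.

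First I would check self-adjointness: since $(BB^\tau)^{-1}$ is symmetric (being the inverse of a symmetric matrix), we get $P^\ast=B^\tau\bigl((BB^\tau)^{-1}\bigr)^\ast B=B^\tau(BB^\tau)^{-1}B=P$. Next, idempotency follows from a one-line cancellation:
\begin{equation*}
P^2=B^\tau(BB^\tau)^{-1}\bigl(BB^\tau\bigr)(BB^\tau)^{-1}B=B^\tau(BB^\tau)^{-1}B=P.
\end{equation*}
Finally, for the image: clearly $\mathrm{Im}(P)\subseteq\mathrm{Col}(B^\tau)$ since $P$ ends in a left factor of $B^\tau$. Conversely, given any $x=B^\tau y\in\mathrm{Col}(B^\tau)$, I would show $Px=x$. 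Write $\mathbb R^n=\ker(B^\tau)\oplus\mathrm{Col}(B)$; but $\ker(B^\tau)=\ker(BB^\tau)=0$ since $BB^\tau$ is invertible, so $B^\tau$ is injective and $\mathrm{Col}(B)=\mathbb R^n$. Hence $y$ may be replaced by $y=B z$ for suitable $z$... more simply, compute $Px=B^\tau(BB^\tau)^{-1}(BB^\tau)y=B^\tau y=x$, using $BB^\tau y = B(B^\tau y)=Bx$ and $(BB^\tau)(BB^\tau)^{-1}=I$. So $P$ restricts to the identity on $\mathrm{Col}(B^\tau)$, and since it is a self-adjoint idempotent, it annihilates $\mathrm{Col}(B^\tau)^\perp=\ker(B)$; therefore $P=P_{\mathrm{Col}}$.

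There is no serious obstacle here — the statement is a standard fact about the orthogonal projection onto the column space of a matrix with linearly independent columns (or, dually, onto the row space of $B$), and the only input needed beyond linear algebra is the invertibility of $BB^\tau$, which is immediate from the second identity in \eqref{Equations for the edge-vertex matrix B}. The mild point worth stating cleanly is \emph{why} $BB^\tau$ is invertible, i.e. that $(n-2)I_n+J_n$ has eigenvalues $2n-2$ and $n-2$, both nonzero for $n\ge 3$; I would record this explicitly since the same eigendata will be reused when extracting the projectors $P_{2n-4}$, $P_{n-4}$, $P_{-2}$ in the subsequent lemmas.
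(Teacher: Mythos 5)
Your proof is correct and follows essentially the same route as the paper: verify $P^2=P=P^\ast$ and that $P$ acts as the identity on $\mathrm{Col}(B^\tau)$ while annihilating $\ker(B)$. Your direct cancellation $PB^\tau y=B^\tau(BB^\tau)^{-1}(BB^\tau)y=B^\tau y$ is in fact cleaner than the paper's expansion of $v_2$ in the columns of $B^\tau$, and your explicit note that $BB^\tau=(n-2)I_n+J_n$ has eigenvalues $2n-2$ and $n-2$ (hence is invertible for $n\ge 3$) supplies a justification the paper leaves implicit.
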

\begin{proof} 
Use \eqref{Decomposition into kernel of B and column space of B transpose} to write any $v\in \mathbb R^{n\choose 2}$ uniquely as $v = v_1 + v_2$ with $v_1\in \kerr(B), v_2\in \text{Col}(B^\tau)$.Then   
\begin{align*}
B^\tau(BB^\tau)^{-1}B & (v_1+v_2)  = B^\tau(BB^\tau)^{-1}B (v_2) \cr 
& = B^\tau(BB^\tau)^{-1}B (\lambda_1B_1^\tau + \dots + \lambda_nB_n^\tau) \qquad (B^\tau_i = i\text{-th column of }B^\tau)\cr 
& = \sum_{i=1}^n \lambda _i B^\tau (BB^\tau)^{-1} BB_i^\tau \cr
& = \sum_{i=1}^n \lambda _i B^\tau (BB^\tau)^{-1} (BB^\tau)_i \quad ((BB^\tau)_i = i\text{-th column of }BB^\tau) \cr 
& = \sum_{i=1}^n \lambda _i B^\tau \cdot [0,\dots, 0,\underbrace{1}_{i-th \text{ coordinate}},0\dots, 0]^\tau  \cr
& = \sum_{i=1}^n \lambda _i B_i^\tau \cr 
& = v_2.
\end{align*}
Additionally
\begin{align*}
(B^\tau(BB^\tau)^{-1}B)^2& = B^\tau(BB^\tau)^{-1}(BB^\tau)(BB^\tau)^{-1}B = B^\tau(BB^\tau)^{-1}, \cr
(B^\tau(BB^\tau)^{-1}B)^\tau & = B^\tau (BB^\tau)^{-\tau} B = B^\tau(BB^\tau)^{-1}B.
\end{align*}
This completes the proof.  
\end{proof}
Knowing the orthogonal projector $P_{\text{Col}}$ onto the column space of $B^\tau$, we can derive the orthogonal projectors onto the eigenspaces of $A$. 
\begin{proposition} \label{The orthogonal projectors of the pair-cut matrix}
The orthogonal projectors $P_\lambda$ onto the eigenspaces $E_\lambda$ of $A$, for $\lambda = n-4, 2n-4, -2$ are given by:
\begin{itemize}
\item[(i)] $P_{-2} = I_{n\choose 2} - P_{\text{Col}} = I_{n\choose 2} - B^\tau(BB^\tau)^{-1}B$.
\vskip3mm
\item[(ii)] $P_{2n-4} = \frac{1}{{n\choose 2}} \, J_{n\choose 2}$.
\vskip3mm
\item[(iii)] $P_{n-4} = P_{\text{Col}} - P_{2n-4} = B^\tau(BB^\tau)^{-1}B - \frac{1}{{n\choose 2}}\,  J_{n\choose 2}$.
\end{itemize}
\end{proposition}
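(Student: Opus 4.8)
The plan is to verify the defining properties of orthogonal projectors — namely $P^2 = P = P^\ast$ together with the correct image — for each of the three claimed operators, and to check that they form a complete orthogonal resolution of the identity matching the decomposition \eqref{Decomposition into kernel of B and column space of B transpose}.

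First I would dispatch (i): since $P_{\text{Col}}$ was just shown to be the orthogonal projector onto $\text{Col}(B^\tau) = E_{2n-4}\oplus E_{n-4}$, its complement $I_{n\choose 2} - P_{\text{Col}}$ is automatically the orthogonal projector onto the orthogonal complement, which by \eqref{Decomposition into kernel of B and column space of B transpose} is $\kerr(B) = E_{-2}$. This requires no computation beyond invoking the previous lemma.

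Next, for (ii), I would argue that $E_{2n-4}$ is spanned by the all-ones vector $\mathbbm 1 \in \mathbb R^{n\choose 2}$. Indeed, each column $\delta_{\{i,j\}}$ of $A = S_{sq}$ has exactly $2(n-2)$ entries equal to $1$ (the pairs sharing exactly one element with $\{i,j\}$), so $A\mathbbm 1 = (2n-4)\mathbbm 1$; since the eigenvalue $2n-4$ has multiplicity $1$ by Theorem \ref{Full rank of the cut matrix}, $E_{2n-4} = \text{span}(\mathbbm 1)$. The orthogonal projector onto the span of $\mathbbm 1$ in $\mathbb R^{n\choose 2}$ is $\frac{\mathbbm 1 \mathbbm 1^\tau}{\|\mathbbm 1\|^2} = \frac{1}{{n\choose 2}} J_{n\choose 2}$, giving (ii). One should also note $\mathbbm 1 \in \text{Col}(B^\tau)$, consistent with $E_{2n-4}\subset \text{Col}(B^\tau)$ — this follows since $\mathbbm 1$ is orthogonal to $\kerr(B)$ (every $v \in \kerr(B)$ satisfies $Av = -2v$, and $\mathbbm 1^\tau v = -\tfrac{1}{2}\mathbbm 1^\tau A v = -\tfrac{1}{2}(A\mathbbm 1)^\tau v = (n-2)\,\mathbbm 1^\tau v$, forcing $\mathbbm 1^\tau v = 0$).

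Finally, (iii) follows formally: within the orthogonal direct sum $\text{Col}(B^\tau) = E_{2n-4}\oplus E_{n-4}$, with $P_{2n-4}$ projecting onto the first summand and $P_{\text{Col}}$ onto the whole space, the difference $P_{\text{Col}} - P_{2n-4}$ projects onto $E_{n-4}$; one can confirm $P_{2n-4}$ and $P_{\text{Col}} - P_{2n-4}$ are orthogonal projectors with $P_{2n-4}(P_{\text{Col}}-P_{2n-4}) = 0$ using $P_{2n-4}P_{\text{Col}} = P_{2n-4}$ (valid because $\imm P_{2n-4}\subset \imm P_{\text{Col}}$ and both are symmetric idempotents). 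I expect the only real subtlety — the "main obstacle," though a mild one — to be pinning down $E_{2n-4} = \text{span}(\mathbbm 1)$ cleanly and confirming $J_{n\choose 2}$'s normalization; everything else is formal manipulation of the orthogonal splitting already established in \eqref{Decomposition into kernel of B and column space of B transpose}, combined with the multiplicities from Theorem \ref{Full rank of the cut matrix}.
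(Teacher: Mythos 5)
Your proof is correct and takes essentially the same route as the paper: identify $E_{-2}$ with $\kerr(B)$ via the complement of $P_{\text{Col}}$, identify $E_{2n-4}$ with the span of the all-ones vector (you justify this a bit more explicitly via the $(2n-4)$-regularity of $L(K_n)$ and the multiplicity count), and obtain $P_{n-4}$ as $P_{\text{Col}} - P_{2n-4}$ using $P_{2n-4}P_{\text{Col}} = P_{2n-4}$. One harmless slip in your redundant side check: $-\tfrac{1}{2}(A\mathbf{1})^\tau v = -(n-2)\,\mathbf{1}^\tau v$ rather than $+(n-2)\,\mathbf{1}^\tau v$, but the conclusion $\mathbf{1}^\tau v = 0$ still follows, and the inclusion $E_{2n-4}\subset \text{Col}(B^\tau)$ is in any case already guaranteed by the orthogonal decomposition \eqref{Decomposition into kernel of B and column space of B transpose}.
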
 
\begin{proof}
Since $E_{-2} = \kerr(B) = (\text{Col}(B^\tau)^\perp$, then the orthogonal projector $P_{-2}$ onto $E_{-2}$ is given by $P_{-2} = I_{n\choose 2} - P_{\text{Col}}$, as claimed in (i).
\vskip3mm 
The sole generator of $E_{2n-4}$ is easily seen to be given by $\mathbf{1}_{n\choose 2}$ (the column vector of dimension ${n\choose 2}$ with all coordinates equal to 1), and therefore the orthogonal projector $P_{2n-4}$ onto $E_{2n-4}$ is given by 
$$P_{2n-4} = \lambda \mathbf{1}_{n\choose 2}\cdot \mathbf{1}^\tau_{n\choose 2} = \lambda  J_{n\choose 2},$$
for some choice of $\lambda \in \mathbb R$. 
Clearly $P_{2n-4}^\ast = P_{2n-4}$ for any choice of $\lambda$, but $P_{2n-4}^2 = \lambda ^2 J_{n\choose 2}^2 = \lambda^2 {n\choose 2} J_{n\choose 2}$, showing that $\lambda = \frac{1}{{n\choose 2}}$. This leads to $P_{2n-4} = \frac{1}{{n\choose 2}}\, J_{n\choose 2}$, proving (ii). 
\vskip3mm  
Since Col$(B^\tau) = E_{n-4}\oplus E_{2n-4}$, then the orthogonal projector $P_{n-4}$ onto $E_{n-4}$ is gotten by first orthogonally projecting onto Col$(B^\tau)$ using the already computed $P_{\text{Col}}$, and then applying $I_{n\choose 2} - P_{2n-4}$. This leads to
$$P_{n-4} = \left(I_{n\choose 2} - P_{2n-4}\right) P_{\text{Col}} = P_{\text{Col}} - P_{2n-4} = B^\tau(BB^\tau)^{-1}B - \frac{1}{{n\choose 2}}\, J_{n\choose 2}.$$ 
In the above we used the fact that $E_{2n-4}\subset \text{Col}(B^\tau)$ and thus $P_{2n-4}P_{\text{Col}} =P_{2n-4}$. Lastly, note that 
\begin{align*}
P_{n-4}^\ast  & = (P_{\text{Col}} - P_{2n-4})^\ast = P_{\text{Col}}^\ast - P_{2n-4}^\ast = P_{\text{Col}} - P_{2n-4} = P_{n-4}, \cr 
P_{n-4}^2 & = (P_{\text{Col}} - P_{2n-4})^2 = P_{\text{Col}}^2 - P_{\text{Col}} P_{2n-4} - P_{2n-4}P_{\text{Col}} +P_{2n-4}^2 \cr
& = P_{\text{Col}} - P_{\text{Col}} P_{2n-4} - P_{2n-4}P_{\text{Col}} +P_{2n-4} \cr
& = P_{\text{Col}} - P_{2n-4} - P_{2n-4}+P_{2n-4} \cr
& = P_{\text{Col}} - P_{2n-4}\cr
& = P_{n-4}.
\end{align*} 
This proves (iii), thereby completing the proof of the proposition. 
\end{proof}
\subsection{The spectral decomposition of $S_{sq}$ and $S_{sq}^{-1}$} \label{Section on the spectral decomposition of the square cut-matrix}
With the above calculations of the eigenspaces $E_\lambda$ of the square cut-matrix $S_{sq} = A$ and their associated orthogonal projectors $P_\lambda :\mathbb R^{n\choose 2} \to \mathbb R^{n\choose 2}$, we now use the Spectral Theorem to obtain this identity: 
$$S_{sq} = A = -2P_{-2}+ (2n-4) P_{2n-4}+ (n-4)P_{n-4}.$$
Continuing with our assumption of $n\ge 5$, we see that $A$ is invertible and therefore the Spectral Theorem applied to $A^{-1}$ yields: 
\begin{equation} \label{First expression for the invers of the square cut matrix}
A^{-1} = -\frac{1}{2}P_{-2} + \frac{1}{2n-4}P_{2n-4}+\frac{1}{n-4}P_{n-4}.
\end{equation} 

By relying on Proposition \ref{The orthogonal projectors of the pair-cut matrix}, we may simplify \eqref{First expression for the invers of the square cut matrix} as follows:
\begin{align} \label{Second expression for the invers of the square cut matrix}
A^{-1} & = -\frac{1}{2}P_{-2} +\frac{1}{2n-4}P_{2n-4}+\frac{1}{n-4}P_{n-4},	\cr
& = -\frac{1}{2} \left(I_{n\choose 2} - P_{\text{Col}}\right) +\frac{1}{2n-4} P_{2n-4} +\frac{1}{n-4} \left( P_{\text{Col}} - P_{2n-4}\right), \cr
& = -\frac{1}{2} I_{n\choose 2} -\frac{n}{2(n-2)(n-4)} P_{2n-4} + \frac{n-2}{2(n-4)} P_{\text{Col}}.
\end{align}
\subsection{Solving the system $S_{sq}w = d$}
The unique solution $w_{sq}$ of $S_{sq}w_{sq} = Aw_{sq} = d$ (this is the linear system \eqref{A pair cut metric via matrices}) is courtesy of \eqref{Second expression for the invers of the square cut matrix} and Proposition \ref{The orthogonal projectors of the pair-cut matrix}, given by 
\begin{align} \label{First formula for wsq from orthogonal projectors}
w_{sq} & = A^{-1}\,d, \cr 
& = -\frac{1}{2}d - \frac{n}{2(n-2)(n-4)}P_{2n-4}(d)  + \frac{n-2}{2(n-4)} P_{\text{Col}}(d), \cr
& = -\frac{1}{2}d - \frac{n}{2(n-2)(n-4)}\cdot \frac{2}{n(n-1)} J_{n\choose 2} d  + \frac{n-2}{2(n-4)} P_{\text{Col}}(d), \cr
 & = -\frac{1}{2}d - \frac{1}{(n-1)(n-2)(n-4)}\text{Tr}(d) \mathbf{1}_{n\choose 2}  + \frac{n-2}{2(n-4)} P_{\text{Col}}\, d.
\end{align}
In the above we have introduced the notation Tr$(d)$, which we refer to as the {\em trace of the metric $d$}, and which is defined as 
\begin{equation} \label{Definition of the trace of a metric}
\text{Tr}(d) = \sum_{i<j} \, d(i,j).
\end{equation}
The first two terms in the last row of \eqref{First formula for wsq from orthogonal projectors} are completely explicit, while the third term is not. We rectify this next. From the definition of the vertex-edge matrix $B$, we find for each $i\in V_n$: 
$$(Bd)_i  = \sum_{k\ne i} d(i,k).  $$
As the type of sum appearing on the right-hand side above makes a frequent appearance in formulae below, we call it the {\em star-shaped trace $s_i$ of the $i$-th vertex $i\in V_n$}, that is  
\begin{equation} \label{Definition of the star-shaped trace of the i-th vertex}
s_i = \sum_{k\ne i} d(i,k).
\end{equation} 
The star-shaped trace is the sum of the distances from the $i$-th vertex to all other vertices $k\in V_n-\{i\}$. Note that $(Bd)_i = s_i$ and that Tr$(d) =\frac{1}{2} \sum _{i=1}^n s_i$. Next we calculate $(BB^\tau)^{-1}(Bd)_i$ by relying on \eqref{Equations for the edge-vertex matrix B}, from which one finds that 
$$(BB^\tau)^{-1} =  \frac{1}{n-2}I_n - \frac{1}{2(n-1)(n-2)} J_n.$$
Thus
\begin{align*}
(BB^\tau)^{-1}(Bd) & =  \left( \frac{1}{n-2}I_n - \frac{1}{2(n-1)(n-2)} J_n\right)(Bd), \cr 
& = \frac{1}{n-2}(Bd) - \frac{1}{2(n-1)(n-2)} J_n (Bd),  \cr
& = \frac{1}{n-2}(Bd)  - \frac{1}{2(n-1)(n-2)} \left( \sum_{i=1}^n s_i\right)  \mathbf{1}_n,  \cr
& = \frac{1}{n-2}(Bd)  - \frac{1}{(n-1)(n-2)} \text{Tr}(d) \mathbf{1}_n.  \cr
\end{align*}
Lastly, we apply $B^\tau$ to both sides and again rely on \eqref{Equations for the edge-vertex matrix B}:
\begin{align*}
P_{\text{Col}}(d) &= B^\tau (BB^\tau)^{-1}(Bd), \cr
& = \frac{1}{n-2}(B^\tau B)d  - \frac{1}{(n-1)(n-2)} \text{Tr}(d) B^\tau \mathbf{1}_n, \cr 
& = \frac{1}{n-2}(2I_{n\choose 2} + A)d  - \frac{1}{(n-1)(n-2)} \text{Tr}(d) B^\tau \mathbf{1}_n, \cr 
& = \frac{2}{n-2}d + \frac{1}{n-2}(Ad)  - \frac{1}{(n-1)(n-2)} \text{Tr}(d) (B^\tau \mathbf{1}_n). 
\end{align*}
The coordinate of the right-hand side corresponding to the pair $\{i,j\}$ is not hard to evaluate. We use the notation $A_{\{i,j\}}$ to mean the row of $A$ corresponding to the pair $\{i,j\}$: 
\begin{align*}
(P_{\text{Col}}(d))_{\{i,j\}} & = \frac{2}{n-2}d(i,j) + \frac{1}{n-2}(Ad)_{\{i,j\}} - \frac{\text{Tr}(d)}{(n-1)(n-2)} (B^\tau \mathbf{1}_n)_{\{i,j\}}, \cr
& = \frac{2}{n-2}d(i,j) + \frac{1}{n-2}A_{\{i,j\}}\cdot d - \frac{\text{Tr}(d)}{(n-1)(n-2)} (B^\tau)_{\{i,j\}} \mathbf{1}_n), \cr
& = \frac{2}{n-2}d(i,j) + \frac{1}{n-2}\left(\sum_{k\ne i,j} \delta_{\{i,k\}} + \sum_{k\ne i,j} \delta_{\{j,k\}}\right)\cdot d - \frac{2\, \text{Tr}(d)}{(n-1)(n-2)},  \cr
& = \frac{2}{n-2}d(i,j) + \frac{1}{n-2}\left(\sum_{k\ne i,j} d(i,k) + \sum_{k\ne i, j} d(j,k) \right) - \frac{2\, \text{Tr}(d)}{(n-1)(n-2)},  \cr
& = \frac{1}{n-2}\left(\sum_{k\ne i} d(i,k) + \sum_{k\ne j} d(j,k) \right) - \frac{2\, \text{Tr}(d)}{(n-1)(n-2)},  \cr
& = \frac{1}{n-2}\left(s_i + s_j \right) - \frac{2\, \text{Tr}(d)}{(n-1)(n-2)}.
\end{align*}
The calculations above prove most of this theorem: 
\begin{theorem} \label{Main Theorem with Tiger Kung Fu built in}
Assume that $n\ge 5$ and let $w_{sq}$ be the unique solution of the system \eqref{A pair cut metric via matrices} given by $S_{sq}\cdot w_{sq} = d$. Then the $\{i,j\}$-th coordinate of $w_{sq}$ is given by 
\begin{align*}
(w_{sq})_{\{i,j\}} & =  -\frac{1}{2} d(i,j) -\frac{1}{(n-2)(n-4)} \text{Tr}(d) + \frac{1}{2(n-4)}  \left(s_i + s_j \right),	
\end{align*}
with Tr$(d)$ and $s_i$ as in \eqref{Definition of the trace of a metric} and \eqref{Definition of the star-shaped trace of the i-th vertex}.
\end{theorem}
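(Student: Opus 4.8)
The plan is to assemble the already-established pieces and simply expand the scalar multiples. From the penultimate display of the preceding computation, we have the explicit form
\[
w_{sq} = A^{-1} d = -\tfrac{1}{2}d - \tfrac{1}{(n-1)(n-2)(n-4)}\,\mathrm{Tr}(d)\,\mathbf{1}_{n\choose 2} + \tfrac{n-2}{2(n-4)}\,P_{\text{Col}}(d),
\]
and the final computation expresses the $\{i,j\}$-th coordinate of $P_{\text{Col}}(d)$ as $\tfrac{1}{n-2}(s_i+s_j) - \tfrac{2\,\mathrm{Tr}(d)}{(n-1)(n-2)}$. So the first step is to substitute this coordinate formula into the coordinate version of the displayed equation for $w_{sq}$.

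Carrying this out, the $\{i,j\}$-th coordinate becomes
\[
(w_{sq})_{\{i,j\}} = -\tfrac{1}{2}d(i,j) - \tfrac{\mathrm{Tr}(d)}{(n-1)(n-2)(n-4)} + \tfrac{n-2}{2(n-4)}\left(\tfrac{s_i+s_j}{n-2} - \tfrac{2\,\mathrm{Tr}(d)}{(n-1)(n-2)}\right).
\]
The $d(i,j)$ term is already in final form. The $s_i+s_j$ term simplifies immediately since $\tfrac{n-2}{2(n-4)}\cdot\tfrac{1}{n-2} = \tfrac{1}{2(n-4)}$, giving the claimed coefficient $\tfrac{1}{2(n-4)}(s_i+s_j)$. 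The only remaining task is to combine the two $\mathrm{Tr}(d)$ contributions: $-\tfrac{1}{(n-1)(n-2)(n-4)}$ from the constant term, and $\tfrac{n-2}{2(n-4)}\cdot\left(-\tfrac{2}{(n-1)(n-2)}\right) = -\tfrac{1}{(n-1)(n-4)}$ from the projector term. Adding these over the common denominator $(n-1)(n-2)(n-4)$ yields $-\tfrac{1 + (n-2)}{(n-1)(n-2)(n-4)} = -\tfrac{n-1}{(n-1)(n-2)(n-4)} = -\tfrac{1}{(n-2)(n-4)}$, which is exactly the stated coefficient of $\mathrm{Tr}(d)$.

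There is no real obstacle here — the theorem is purely a bookkeeping consolidation of the spectral computation that precedes it; the substantive work (identifying the eigenspaces, the projectors, and inverting $A$ via the Spectral Theorem) has already been done. The only thing to be careful about is the arithmetic of combining fractions, in particular noticing the fortuitous cancellation $1 + (n-2) = n-1$ that cancels against the $(n-1)$ in the denominator and produces the clean final coefficient. I would present the proof as a one-line substitution followed by this fraction simplification, remarking that the case $n \ge 5$ is needed precisely so that $n-4 \neq 0$ and $A$ is invertible, as guaranteed by Theorem \ref{Full rank of the cut matrix}.
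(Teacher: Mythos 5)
Your proposal is correct and follows essentially the same route as the paper: substitute the coordinate formula for $(P_{\text{Col}}(d))_{\{i,j\}}$ into the expression for $w_{sq} = A^{-1}d$ obtained from the spectral decomposition, then combine the two $\mathrm{Tr}(d)$ terms via the cancellation $1+(n-2)=n-1$. The arithmetic checks out and nothing is missing.
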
 
\begin{proof}
The calculations preceding the theorem proved that 
\begin{align*}
w_{sq} & =  -\frac{1}{2}d - \frac{1}{(n-1)(n-2)(n-4)}\text{Tr}(d) \mathbf{1}_{n\choose 2}  + \frac{n-2}{2(n-4)} P_{Col}(d),
\end{align*}
from which we can compute the $\{i,j\}$-th coordinate of $w_{sq}$ as:  	
\begin{align*}
(w_{sq})_{\{i,j\}} & = -\frac{1}{2} d(i,j) -\frac{1}{(n-1)(n-2)(n-4)} \text{Tr}(d)  + \frac{n-2}{2(n-4)} (P_{\text{Col}}\, d)_{\{i,j\}}, \cr	
& = -\frac{1}{2} d(i,j) -\frac{1}{(n-1)(n-2)(n-4)} \text{Tr}(d) \cr
& \quad  + \frac{n-2}{2(n-4)} \left( \frac{1}{n-2} \left(\sum_{k\ne i} d(i,k) + \sum_{k\ne j} d(j,k) \right) - \frac{2}{(n-1)(n-2)} \text{Tr}(d)\right), \cr	
& = -\frac{1}{2} d(i,j) -\frac{1}{(n-2)(n-4)} \text{Tr}(d) + \frac{1}{2(n-4)}  \left(\sum_{k\ne i} d(i,k) + \sum_{k\ne j} d(j,k) \right), \cr	
& = -\frac{1}{2} d(i,j) -\frac{1}{(n-2)(n-4)} \text{Tr}(d) + \frac{1}{2(n-4)}  \left(\sum_{k\ne i} d(i,k) + \sum_{k\ne j} d(j,k) \right), \cr	
& = -\frac{1}{2} d(i,j) -\frac{1}{(n-2)(n-4)} \text{Tr}(d) + \frac{1}{2(n-4)}  \left(s_i + s_j \right).
\end{align*}
This proves the theorem.   	
\end{proof}
The following is a re-stating of Theorem \ref{Main theorem stated in the introduction} from the introduction, included here for the convenience of the reader. It provides a complete answer to Question \ref{Question about a metric belonging to the pair-cut cone}.
\begin{theorem} \label{Main inequalitis on star-traces and traces of metrics in the pair-cut-cone}
A metric $d$ on $V_n$ with $n\ge 5$, lies in the pair-cut cone $PCUT_n$ if and only if each of the ${n\choose 2}$ inequalities are satisfied:  
\begin{equation} \label{Main inequality for the coordinates of wsq to be nonnegative}
 s_i + s_j\ge (n-4)d(i,j) + \frac{2}{n-2}\text{Tr}(d). 
\end{equation}
These inequalities are enumerated by the pairs $\{i,j\}\subset V_n$, and are best possible (see Example \ref{Example of a single cut semimetric}).
\end{theorem}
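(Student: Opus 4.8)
The plan is to derive Theorem~\ref{Main inequalitis on star-traces and traces of metrics in the pair-cut-cone} as a direct consequence of Theorem~\ref{Main Theorem with Tiger Kung Fu built in}. The key observation is that since $n\ge 5$, the square cut-matrix $S_{sq}$ is invertible (Theorem~\ref{Full rank of the cut matrix}), so the linear system $S_{sq}\cdot w = d$ has the \emph{unique} solution $w_{sq} = S_{sq}^{-1}d$. Consequently, by definition of $PCUT_n$ via \eqref{A pair cut metric via matrices}, a metric $d$ lies in $PCUT_n$ if and only if this unique solution $w_{sq}$ satisfies $w_{sq}\ge 0$, i.e.\ $(w_{sq})_{\{i,j\}}\ge 0$ for every pair $\{i,j\}\subset V_n$. (It is worth spelling out explicitly that uniqueness is what makes the ``only if'' direction work: if some other non-negative combination of pair-cuts represented $d$, it would have to coincide with $w_{sq}$.)

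The main step is then purely algebraic: substitute the formula for $(w_{sq})_{\{i,j\}}$ from Theorem~\ref{Main Theorem with Tiger Kung Fu built in}, namely
\begin{equation*}
(w_{sq})_{\{i,j\}} = -\tfrac{1}{2}\,d(i,j) - \tfrac{1}{(n-2)(n-4)}\,\mathrm{Tr}(d) + \tfrac{1}{2(n-4)}\,(s_i+s_j),
\end{equation*}
into the inequality $(w_{sq})_{\{i,j\}}\ge 0$. Since $n\ge 5$ we have $n-4\ge 1>0$, so multiplying through by $2(n-4)$ preserves the direction of the inequality and yields
\begin{equation*}
s_i + s_j - (n-4)\,d(i,j) - \tfrac{2}{n-2}\,\mathrm{Tr}(d) \;\ge\; 0,
\end{equation*}
which rearranges precisely to \eqref{Main inequality for the coordinates of wsq to be nonnegative}. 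Running this equivalence over all $\binom{n}{2}$ pairs $\{i,j\}$ simultaneously gives the claimed characterization.

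There is essentially no hard obstacle here, since the heavy lifting—the spectral decomposition of $S_{sq}$ and the resulting closed form for $w_{sq}$—has already been carried out. The only points requiring a little care are: (1) making the ``if and only if'' precise by invoking invertibility for $n\ge 5$ to guarantee uniqueness of the solution; (2) checking the sign of $n-4$ so that the inequality is not reversed (this is exactly why the hypothesis $n\ge 5$ appears, rather than $n\ge 3$); and (3) the final remark that the inequalities are best possible, which is handled by pointing to Example~\ref{Example of a single cut semimetric}: taking $d = \delta_{\{i,j\}}$ itself, one computes that the inequality for the pair $\{i,j\}$ holds with equality, so none of the constants in \eqref{Main inequality for the coordinates of wsq to be nonnegative} can be improved.
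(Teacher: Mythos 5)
Your proposal is correct and follows essentially the same route as the paper: invertibility of $S_{sq}$ for $n\ge 5$ gives uniqueness of the solution $w_{sq}=S_{sq}^{-1}d$, so membership in $PCUT_n$ is equivalent to $(w_{sq})_{\{i,j\}}\ge 0$ for all pairs, and rearranging the closed form from Theorem~\ref{Main Theorem with Tiger Kung Fu built in} (using $n-4>0$) yields exactly \eqref{Main inequality for the coordinates of wsq to be nonnegative}. One small slip in your sharpness remark: for $d=\delta_{\{p,q\}}$ the inequality indexed by the pair $\{p,q\}$ itself is \emph{strict} ($2n-4>4$ for $n\ge 5$); equality is attained instead for the pairs $\{i,j\}$ with $|\{i,j\}\cap\{p,q\}|\le 1$, as Example~\ref{Example of a single cut semimetric} shows.
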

\begin{proof} 
From Theorem \ref{Main Theorem with Tiger Kung Fu built in} we see that $(w_{sq})_{\{i,j\}}\ge 0$ if and only if 
\begin{align*}
0 &\le -\frac{1}{2} d(i,j) -\frac{1}{(n-2)(n-4)} \text{Tr}(d) + \frac{1}{2(n-4)}  \left(s_i + s_j \right), \cr	
\frac{1}{2} d(i,j) & \le \frac{1}{2(n-4)}\left(s_i + s_j \right) -\frac{1}{(n-2)(n-4)} \text{Tr}(d), \cr
(n-4)  d(i,j) & \le \left(s_i + s_j \right) -\frac{2\text{Tr}(d) }{(n-2)} .
\end{align*}
This completes the proof. 
\end{proof}

\begin{corollary} \label{Corollary with main ineqaulity for PCUTn membership}
If a metric $d$ on $V_n$ with $n\ge 5$ lies in the pair-cut cone $PCUT_n$, then 
\begin{equation} \label{Secondary inequality on star-traces and traces of metrics in the pair-cut-cone}
 (n-2)s_i \ge  \text{Tr}(d), \quad \text{ for all } i\in V_n. 
 \end{equation}
\end{corollary}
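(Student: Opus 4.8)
The plan is to deduce the single-vertex inequality \eqref{Secondary inequality on star-traces and traces of metrics in the pair-cut-cone} by summing the full family of $PCUT_n$-inequalities \eqref{Main inequality for the coordinates of wsq to be nonnegative} over all pairs that contain the fixed vertex $i$. Since membership in $PCUT_n$ is, by Theorem \ref{Main inequalitis on star-traces and traces of metrics in the pair-cut-cone}, equivalent to
$$s_i + s_j \ge (n-4)\,d(i,j) + \frac{2}{n-2}\,\text{Tr}(d)$$
holding for \emph{every} pair $\{i,j\}\subset V_n$, it is legitimate to add together the $n-1$ such inequalities indexed by the pairs $\{i,j\}$ with $j\ne i$.

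First I would evaluate the left-hand side of the summed inequality. The term $s_i$ appears $n-1$ times, contributing $(n-1)s_i$, and the remaining terms contribute $\sum_{j\ne i} s_j = \big(\sum_{k=1}^n s_k\big) - s_i = 2\,\text{Tr}(d) - s_i$, using the identity $\text{Tr}(d)=\tfrac12\sum_{k} s_k$ noted after \eqref{Definition of the star-shaped trace of the i-th vertex}. So the left-hand side equals $(n-2)s_i + 2\,\text{Tr}(d)$. Next I would evaluate the right-hand side: $\sum_{j\ne i} d(i,j) = s_i$ by definition of the star-shaped trace, and the constant term contributes $(n-1)\cdot\frac{2}{n-2}\text{Tr}(d)$, giving $(n-4)s_i + \frac{2(n-1)}{n-2}\text{Tr}(d)$.

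Finally I would rearrange the resulting inequality $(n-2)s_i + 2\,\text{Tr}(d) \ge (n-4)s_i + \frac{2(n-1)}{n-2}\text{Tr}(d)$: moving the $s_i$-terms to one side gives $2 s_i \ge \big(\frac{2(n-1)}{n-2} - 2\big)\text{Tr}(d) = \frac{2}{n-2}\text{Tr}(d)$, i.e. $(n-2)s_i \ge \text{Tr}(d)$, as desired. Honestly there is no real obstacle here — the only thing to be careful about is the bookkeeping in the two sums (in particular not double-counting $s_i$ and correctly using $\sum_k s_k = 2\text{Tr}(d)$), and observing that for $n\ge 5$ no division by $n-4$ or sign reversal is involved, so the inequality direction is preserved throughout.
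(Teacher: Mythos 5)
Your proposal is correct and is essentially identical to the paper's own proof: the paper likewise sums the inequalities \eqref{Main inequality for the coordinates of wsq to be nonnegative} over all $j\ne i$, uses $\sum_{j\ne i} s_j = 2\,\text{Tr}(d) - s_i$ and $\sum_{j\ne i} d(i,j) = s_i$, and rearranges to obtain $(n-2)s_i \ge \text{Tr}(d)$. The bookkeeping in your computation matches the paper's line by line.
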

\begin{proof}
This corollary follows from the previous theorem by summing over all indices $j\ne i$:
\begin{align*}
\sum_{j\ne i}s_i +   \sum_{j\ne i} s_j & \ge (n-4)\sum_{j\ne i} d(i,j) + \frac{2(n-1)}{n-2}\text{Tr}(d), \cr
(n-1)s_i +   \sum_{j\ne i} \sum_{k\ne j} d(j,k) & \ge (n-4)s_i + \frac{2(n-1)}{n-2}\text{Tr}(d), \cr
(n-1)s_i +   \sum_{a\ne b} d(a,b) - \sum_{k\ne i} d(i,k)  & \ge (n-4)s_i + \frac{2(n-1)}{n-2}\text{Tr}(d), \cr
(n-1)s_i +    2\text{Tr}(d) - s_i  & \ge (n-4)s_i + \frac{2(n-1)}{n-2}\text{Tr}(d), \cr
(n-2)s_i + 2\text{Tr}(d) & \ge (n-4)s_i + \frac{2(n-1)}{n-2}\text{Tr}(d), \cr
2s_i   & \ge   \frac{2n-2-2n+4}{n-2}\text{Tr}(d), \cr
(n-2)s_i   & \ge \text{Tr}(d).
\end{align*}
\end{proof}
\begin{example} \label{Example of a single cut semimetric}
Consider the $d=\delta_{\{p,q\}}$ on $V_n$, $n\ge 5$, which clearly lies in the pair-cut-cone $PCUT_n$. We use it as an illustration only, to compare the two sides of the inequality \eqref{Main inequality for the coordinates of wsq to be nonnegative}. Explicitly, $d$ is described by 
$$
d(i,j) =\left\{
\begin{array}{cl}
1 & \quad ; \quad |\{i,j\} \cap \{p,q\}| = 1, \cr 
0 & \quad ; \quad |\{i,j\} \cap \{p,q\}| \ne 1. 	
\end{array} 
\right.
$$
It follows that Tr$(d) = 2(n-2)$ and that 
$$
s_i =\left\{
\begin{array}{cl}
	2 & \quad ; \quad i\notin\{ p,q\}, \cr 
	n-2 & \quad ; \quad i\in \{p,q\}. 	
\end{array} 
\right.
$$
The inequality  \eqref{Main inequality for the coordinates of wsq to be nonnegative} then reads 
\begin{align*}
s_i + s_j & \ge  (n-4)d(i,j) +  \frac{2}{n-2}\text{Tr}(d) =  (n-4)d(i,j) +  4.
\end{align*} 
For the various choices of $i,j$ in relation to $p,q$, this latter inequality becomes: 
$$ 
\begin{array}{rl}
2+ 2 - [0 +4  ] = 0 & \quad ; \quad |\{i,j\}\cap \{p,q\}|=0, \cr 
2+ (n-2) - [(n-4) +4  ] = 0 & \quad ; \quad |\{i,j\}\cap \{p,q\}|=1, \cr 
(n-2)+ (n-2) - [0 +4  ] > 0 & \quad ; \quad |\{i,j\}\cap \{p,q\}|=2. 
\end{array} 
$$
In most cases the inequality becomes an equality, and this is what was meant by the comment that ``These inequalities are best possible'' in the statement of Theorem \ref{Main inequalitis on star-traces and traces of metrics in the pair-cut-cone}. The inequalities \eqref{Secondary inequality on star-traces and traces of metrics in the pair-cut-cone} for this example are $2(n-2)\ge 2(n-2)$ and $(n-1)(n-2) \ge 2(n-2)$, showing sharpness in this case too. 
\end{example}
\vskip5mm
\subsection{Examples and Applications} \label{Section with examples and applications of embeddability in PCUT_n}
This section is devoted to applications of Theorem \ref{Main inequalitis on star-traces and traces of metrics in the pair-cut-cone}. We consider several well-known families of graphs, equipped with either the truncated metric $d_0$ or graph metric $d_1$ (cf. Example \ref{Example of SIG metrics from the introduction}) and ask under what conditions, if any, they lie in the pair-cut cone $PCUT_n$. For those metrics not in $PCUT_n$, we comment on what is known about their membership in $CUT_n$, for comparison and contrast. All applications of Theorem \ref{Main inequalitis on star-traces and traces of metrics in the pair-cut-cone} require the computation of the trace Tr$(d)$ of a metric, and the star-traces $s_i$, $i \in V_n$. We shall perform these computations in each example without further explaining their utility. With these quantities computed, we will check if the inequalities \eqref{Main inequality for the coordinates of wsq to be nonnegative} hold. Recall that they read
\begin{equation} \label{Main inequality stated for the second time}
(n-2)(s_i + s_j)\ge (n-4)(n-2)d(i,j) + 2\,{Tr}(d), \quad  \text{ for all } \{i,j\}\subset V_n.  
\end{equation}
Recall also that Theorem \ref{Main inequalitis on star-traces and traces of metrics in the pair-cut-cone} requires $n\ge 5$. 
\vskip3mm
\begin{example}[{\bf Complete Graphs $\mathbf{K_n}$}]
For $n\in \mathbb N$ let $K_n$ denote the complete graph on $n$ vertices enumerated by $V_n = \{1,\dots, n\}$. For this example the truncated and graph metrics agree and we write $d$ to denote either. Each distance $d(i,j)$ equals 1. It follows that 
\begin{itemize}
\item[(i)] Tr$(d) = {n\choose 2}$. 
\item[(ii)] $s_i = n-1$ for all $i\in V_n$. 
\end{itemize} 
Accordingly, inequality \eqref{Main inequality for the coordinates of wsq to be nonnegative} and its simplifications become:
\begin{align*}
2(n-2)(n-1) & \textstyle \ge (n-4)(n-2)+  2 \cdot {n\choose 2},  \cr 
2n^2-6n+4 &\ge (n^2-6n+8) + (n^2-n), \cr 
2n^2-6n+4 &\ge 2n^2-7n+8. 
\end{align*}
\vskip2mm
\noindent {\bf Conclusion for $\mathbf{d}$.} {\em The graph metric $d$ on the complete graph $K_n$ with $n\ge 5$, lies in the pair-cut cone $PCUT_n$.}     
\end{example} 
\begin{example}[{\bf Regular Graphs and Cycles}] \label{Example on regular graphs}
A $k$-regular graph $R_{n,k}$ on $n$ vertices is any graph in which each vertex has valence $k$. The values of $n$ and $k$ do not always fully determine the graph $R_{n,k}$, but we opt to keep our notation simple. Assume that $2\le k$ and note that $k\le n-1$. If $k=n-1$ we obtain the complete graph from the previous example. Thus, for the remainder of this example we assume that $k<n-1$. We only consider the truncated metric $d_0$ as the graph metric $d_1$ depends on the specific example of the $k$-regular graph at hand. For $d_0$ we obtain:
\begin{itemize}
\item[(i)] $s_i = k + 2(n-1-k) = 2n -k-2$ for all $i\in V_n$.  
\item[(ii)] Tr$(d_0) = \frac{1}{2}n(k+2(n-1-k)) = \frac{1}{2}n(2n -k -2)$.
\end{itemize} 
Turning to inequality \eqref{Main inequality for the coordinates of wsq to be nonnegative} and its simplifications, we begin with the case of a pair of vertices $i,j \in V_n$ that don't share an edge so that $d_0(i,j) = 2$:
\begin{align*}
(n-2)(4n-2k-4)  & \ge \textstyle (n-4)(n-2)2+  2\cdot \frac{1}{2}n(2n-k-2), \cr
4n^2-12n -2kn +4k+8 & \ge (2n^2-12n+16) + (2n^2-kn-2n), \cr     
4n^2-12n -2kn +4k+8 & \ge 4n^2-14n -kn +16,   \cr 
2n  +4k & \ge nk   +8.    
\end{align*}
This latter inequality is valid for $k=2$, in which case it becomes an equality. However, if $k\ge 3$ and $n\ge 5$, the inequality fails. The regular graphs $C_n:=R_{n,2}$ are called {\em cycles}, and are uniquely determined by $n$. We still need to consider \eqref{Main inequality for the coordinates of wsq to be nonnegative} for a pair of vertices that share an edge:
\begin{align*}
(n-2)(4n-2k-4)  & \ge \textstyle (n-4)(n-2)+  2\cdot \frac{1}{2}n(2n-k-2), \cr
4n^2-12n -2kn +4k+8 & \ge (n^2-6n+8) + (2n^2-kn-2n), \cr     
4n^2-12n -2kn +4k+8 & \ge 3n^2-8n -kn +8,   \cr 
n^2 +4k & \ge 4n+kn    
\end{align*}
Since we are already restricted to the case of $k=2$ from the previous inequality, this latter inequality becomes $n^2+8 \ge 6n$ which is valid for all $n\ge 4$.
\vskip2mm
\noindent {\bf Conclusion for $\mathbf{d_0}$.} {\em Among the regular graphs $R_{n,k}$ with $2\le k \le n-2$ and with $n\ge 5$, only the truncated metrics for the case of $k=2$ lie in the pair-cut cone. It follows that $d_0$ on cycles $C_n$ with $n\ge 5$, lies in $PCUT_n$.} 
\vskip2mm
We are not aware of studies of the truncated metric $d_0$ on $k$-regular graphs with regards to its membership in the cut cone $CUT_n$. 
\end{example} 
\begin{example}[{\bf Cocktail Party graphs $\mathbf{CP_n}$}]
The cocktail party graphs $CP_n$ are obtained from the complete graph $K_{2n}$ by removing the edges corresponding to a perfect matching. The notation $CP_n$ is incomplete, for simplicity we are suppressing the perfect matching, hoping it will not lead to confusion. All cocktail party graphs are regular graphs of type $R_{2n,2n-2}$ to which the previous example applies, to yield:
\vskip2mm
\noindent  {\bf Conclusion for $\mathbf{d_0}$.} {\em For $n\ge 3$, the truncated metric $d_0$ on a cocktail party graph $CP_n$, does not lie in $PCUT_n$.} 
\vskip2mm
In contrast, the truncated metric $d_0$ lies in $CUT_{2n}$ for all $n\ge 2$ and all perfect matchings. To see this, suppose that $V_{2n}$ is partitioned into $n$ disjoint pairs $P_1, \dots, P_n$ corresponding to some perfect matching on $V_{2n}$. Consider the family $\mathcal T$ of {\em transversal cuts of $V_{2n}$}, given by 
$$\mathcal T = \{C\subset V_{2n}\, |\, |C\cap P_i|=1 \text{ for all } i=1,\dots, n \}.$$ 
Each of the $2^n$ transversal cuts $C$ consists of $n$ elements, one from each pair $P_1, \dots, P_n$. Thus, if $\{x,y\} = P_i$ for some $i$, then $\delta_C (x,y) = 1$ for every $C\in \mathcal T$ and so $\sum_{C\in \mathcal T}\delta_C(x,y) = 2^n$. If $\{x,y\}$ is not a matched pair, then there are $2^{n-2}$ transversal cuts that contain $x$ but not $y$, and the same number of transversal cuts that contain $y$ but not $x$. The cut-metrics $\delta_C$ of these $2^{n-1}$ transversal cuts give $\delta_C(x,y)=1$, while each of the remaining $2^{n-1}$ transversal cuts $D$ give $\delta_D(x,y) = 0$. Therefore $\sum_{C\in \mathcal T}\delta_C(x,y) = 2^{n-1}$. It follows that   
$$ d_0 = \frac{1}{2^{n-1}} \sum _{C\in \mathcal T} \delta_C, $$
and thus $d_0\in CUT_{2n}$. 
\vskip3mm
The analysis of the graph metric $d_1$ on $CP_n$ depends on the specific graph. We consider here the simple example where the perfect matching on $V_{2n}$ is given by the pairs $P_i = \{i,i+n\}$, $i=1,\dots, n$. Then for all $i=1,\dots, n$ and all $j\ne i$, one obtains  
$$d_1(i,j) = \left\{
\begin{array}{cl}   
1 & \quad ; \quad j\ne i+n, \cr 
2 & \quad ; \quad j = i+n,
\end{array}   \right. $$
From this it follows that 
\begin{itemize}
\item[(i)] $s_i = 2+ {2n\choose 2}-2 = {2n\choose 2} = n (2n-1)$ for all $i\in V_n$.  
\item[(ii)] Tr$(d_1) = \frac{1}{2} \sum _{i=1}^{2n}s_i = \frac{1}{2} \cdot (2n) \cdot {2n\choose 2} =   n^2 (2n-1)$.
\end{itemize} 
Thus $s_i+s_j = 4n^2-2n$ for all pairs $\{i,j\}$. Turning to inequality \eqref{Main inequality for the coordinates of wsq to be nonnegative} firstly for a pair of vertices $i$ and $i+n$, we obtain: 
\begin{align*}
(2n-2)(4n^2-2n) & \ge (2n-2)(2n-4)2 + 2n^2(2n-1), \cr
8n^3-12n^2+4n& \ge  (8n^2-24n+16) + (4n^3-2n^2), \cr
8n^3-12n^2+4n& \ge  4n^3+6n^2-24n+16, \cr
4n^3 +28 n& \ge 18n^2 + 16, 
\end{align*}
valid for all $n\ge 2$. Similarly, for a pair of vertices $i,j$ with $j\ne i+n$ one obtain this inequality $4n^3+16n\ge 14n^2+8$, which is also valid for $n\ge 2$. 
\vskip2mm
\noindent {\bf Conclusion for $\mathbf{d_1}$.} {\em For the cocktail party graphs $CP_n$ corresponding to the perfect matching $i\to i+n$ for $i=1,\dots, n$, the graph metric $d_1$ lies in $PCUT_n$ for all $n\ge 3$.}  
\end{example} 
The preceding examples are special in that the star trace $s_i$ is independent of the vertex $i$. Suppose more generally that we have a metric space $(V_n, d)$ with this property, and let $s$ denote the star trace $s_i$ for any choice of $i$. Then the trace of the metric is Tr$(d) = \frac{1}{2}ns$. The inequalities  \eqref{Main inequality for the coordinates of wsq to be nonnegative} then become:
\begin{align*}
(n-2)2s & \ge\textstyle  (n-2)(n-4)d(i,j) + 2 \cdot \frac{1}{2}ns, \cr 
2ns -4s & \ge\textstyle  (n-2)(n-4)d(i,j) + ns, \cr
s(n -4) & \ge\textstyle  (n-2)(n-4)d(i,j), \cr
\frac{s}{n-2} & \ge d(i,j). 
\end{align*}
This proves the following proposition. 
\begin{proposition} \label{Proposition with constant values of star traces}
Let $d$ be a metric on $V_n$ for which $s_i$ is independent of the value of $i\in V_n$, and let $s=s_i$. Then $d\in PCUT_n$ if and only if 
$$d(i,j) \le \frac{s}{n-2},\qquad \text{for all pairs } \{i,j\}\subset V_n.$$ 
\end{proposition}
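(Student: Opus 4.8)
The plan is to specialize Theorem~\ref{Main inequalitis on star-traces and traces of metrics in the pair-cut-cone} to the situation where the star trace $s_i=s$ does not depend on $i$, and then simplify the resulting inequalities by hand. First I would record the elementary identity $\text{Tr}(d)=\frac12\sum_{i=1}^n s_i$, which under our hypothesis collapses to $\text{Tr}(d)=\frac12 n s$. Then I would substitute this, together with $s_i+s_j=2s$, into the membership inequality \eqref{Main inequality for the coordinates of wsq to be nonnegative}, namely $s_i+s_j \ge (n-4)d(i,j)+\frac{2}{n-2}\text{Tr}(d)$.

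The core of the argument is then a short chain of algebraic manipulations. Starting from $2s \ge (n-4)d(i,j)+\frac{2}{n-2}\cdot\frac12 ns$, I would clear the denominator by multiplying through by $n-2$, obtaining $2(n-2)s \ge (n-2)(n-4)d(i,j)+ns$. Moving $ns$ to the left gives $(2n-4-n)s = (n-4)s \ge (n-2)(n-4)d(i,j)$. At this point one wants to divide by $n-4$; since $n\ge 5$ we have $n-4\ge 1>0$, so division is legitimate and preserves the inequality direction, yielding $s \ge (n-2)d(i,j)$, i.e. $d(i,j)\le \frac{s}{n-2}$. Because each of these steps is reversible (multiplication and division by the strictly positive quantities $n-2$ and $n-4$), the equivalence, not merely the implication, is established: $d\in PCUT_n$ if and only if $d(i,j)\le \frac{s}{n-2}$ for all pairs $\{i,j\}$.

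There is no real obstacle here; the only point requiring a moment's care is the sign of $n-4$ when dividing, which is exactly why the hypothesis $n\ge 5$ carried over from Theorem~\ref{Main inequalitis on star-traces and traces of metrics in the pair-cut-cone} is needed (for $n=4$ the matrix $S_{sq}$ is singular and the whole framework degenerates, and for $n=5$ one merely gets $s\ge 3d(i,j)$, which is fine). I would also note for completeness that the ${n\choose 2}$ inequalities, one per pair $\{i,j\}$, all reduce to the same family of constraints $d(i,j)\le \frac{s}{n-2}$, so the proposition is simply the cleanest possible repackaging of Theorem~\ref{Main inequalitis on star-traces and traces of metrics in the pair-cut-cone} in the constant-star-trace regime. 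The write-up is essentially the four-line display already appearing before the proposition statement, to which I would append the remark that reversibility gives the "if and only if."
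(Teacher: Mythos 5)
Your proposal is correct and follows exactly the paper's own derivation: substitute $\text{Tr}(d)=\tfrac12 ns$ and $s_i+s_j=2s$ into the membership inequality of Theorem \ref{Main inequalitis on star-traces and traces of metrics in the pair-cut-cone}, clear denominators, and divide by the positive quantity $(n-2)(n-4)$. Your explicit remark that each step is reversible (hence the ``if and only if'') is a small but welcome point of care that the paper leaves implicit.
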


\vskip3mm
\begin{example}[{\bf Hypercube Graphs $\mathbf{Q_n}$}] \label{Hypercube graphs for the pair-cut cone section}
The hypercube graph $Q_n$ for $n\ge 2$ is the 1-skeleton of the hypercube $[0,1]^n\subset \mathbb R^n$. Two vertices share an edge if and only if their coordinates differ in precisely one entry. Hypercube graphs are examples of $R_{2^n, n}$ regular graphs, and thus the Example \ref{Example on regular graphs} applies. Since Theorem \ref{Main inequalitis on star-traces and traces of metrics in the pair-cut-cone} requires the graph have at least 5 vertices, in the case of $Q_n$ this means we require $n\ge 3$. 
\vskip2mm
\noindent {\bf Conclusion for $\mathbf{d_0}$.} {\em The truncated metric $d_0$ on the hypercube graph $Q_n$ does not lie in the pair-cut cone for any $n\ge 3$.}  
\vskip2mm
It is known that the truncated metric $d_0$ on $Q_n$ lies in the cut cone only for $n= 1, 2$. For $n\ge 3$ this possibility is excluded by a pentagonal hypermetric inequality (for example applied to the vertices $A=(0,0,0,\dots)$, $B=(0,1,1,\dots)$, $C=(0,0,1,\dots)$, $D=(0,1,0,\dots)$ and $E=(1,0,0,\dots)$, with the dots corresponding to all zero coordinates).   
\vskip3mm 
Next consider the graph metric $d_1$ on $Q_n$. For a given vertex, the distance to any other vertex of the hypercube is given by the number of coordinates in which they differ. For a given $k\in \{0,1,\dots, n\}$ there are ${n\choose k}$ vertices that differ from a given one in $k$ coordinates, and thus the star-trace of any vertex $x$ is given by
$$s_x = \sum _{k=0}^n k{n\choose k} = n\,2^{n-1}.$$ 
We see that the star trace is independent of the vertex, allowing us to use Proposition \ref{Proposition with constant values of star traces}, prompting us to examine these inequalities:
$$ d(i,j) \le \frac{s}{2^n-2} = \frac{n\, 2^{n-1}}{2(2^{n-1}-1)}. $$
The distance between $(0,\dots, 0)$ and $(1,\dots, 1)$ is equal to $n$ and represents the maximal distance between any pair of vertices. Thus we need to have satisfied the inequality:
\begin{align*}
n & \le  \frac{n\, 2^{n-1}}{2(2^{n-1}-1)}, \cr
2^n & \le 2^{n-1}+2.
\end{align*}
This inequality only holds for $n=1,2$. 
\vskip2mm
\noindent {\bf Conclusion for $\mathbf{d_1}$} {\em The graph metric $d_1$ on the hypercube graph $Q_n$ does not lie in the pair-cut cone for any $n\ge 3$.}
\vskip2mm
In contrast, the graph metric $d_1$ on $Q_n$ lies in the cut cone $CUT_{2^n}$ for all $n\ge 1$ since $d_1$ coincides with the $\ell_1$-metric on $\mathbb R^n$ restricted to $Q_n$.     
\end{example} 
\vskip3mm
\begin{example}[{\bf Complete Bipartite Graphs $B_{m,n}$}]
The complete bipartite graphs $B_{m,n}$, $m,n\ge 1$, consists of two collection of vertices, $x_1,\dots, x_n$ and $y_1,\dots, y_m$, of which each vertex from one collections is connected by an edge to each vertex from the other collection, but no two vertices from the same collection share an edge. For this example the truncated and graph metrics are equal, and we denote them by $d$. We begin by calculating the trace and star-traces, the latter of which we label as $s(x_i)$ and $s(y_j)$ with obvious meaning. 
\begin{itemize}
\item[(i)] Tr$(d) = mn + n(n-1) + m(m-1)$. 
\item[(ii)] $s(x_i) = m + 2(n-1)$ for all $i=1,\dots n$. 
\item[\phantom{[ii]}] $s(y_j) = n + 2(m-1)$ for all $j=1,\dots m$.
\end{itemize} 
Turning to inequality \eqref{Main inequality for the coordinates of wsq to be nonnegative}, we begin by computing the left-hand side first. 
\begin{align*}
\text{LHS of \eqref{Main inequality stated for the second time}}=(n+m-2)(s(x_{i_1}) + s(x_{i_2})) & = (n+m-2)(2m+4n-4) \cr
&  =4n^2+2m^2+6mn-12n-8m+8, \cr 
(n+m-2)(s(y_{j_1}) + s(y_{j_2})) & = (n+m-2)(4m+2n-4) \cr
&  =2n^2+4m^2+6mn-12m-8n+8, \cr 
(n+m-2)(s(x_{i}) + s(y_{j})) & = (n+m-2)(3m+3n-4) \cr
&  =3n^2+3m^2+6mn-10m-10n+8. 
\end{align*}
Next, to calculate the right-hand side of \eqref{Main inequality stated for the second time}, we distinguish between the cases when the vertices share an edge vs when they don't. Let us call {\em Case 1} the case where they don't share an edge (in which case the vertices are distance 2 apart), and {\em Case 2} the case where they share an edge (making their distance equal to 1). Then the right-hand side of \eqref{Main inequality stated for the second time} becomes:
\begin{align*}
\text{RHS of \eqref{Main inequality stated for the second time}, Case 1} &\textstyle  = 2(m+n-4)(m+n-2)+  2mn + 2n^2-2n+2m^2-2m \cr
& = 4m^2+4n^2+6mn-14m-14n+16, \cr   
\text{RHS of \eqref{Main inequality stated for the second time}, Case 2} &\textstyle  = (m+n-4)(m+n-2)+  2mn+2n^2-2n+2m^2-2m \cr
& = 3m^2+3n^2+4mn-8m-8n+8
\end{align*}
In Case 1, inequality \eqref{Main inequality stated for the second time} becomes one of:
\begin{align*}
4n^2+2m^2+6mn-12n-8m+8 & \ge 4m^2+4n^2+6mn-14m-14n+16, \quad \text{or}, \cr 
2n^2+4m^2+6mn-12m-8n+8 & \ge 4m^2+4n^2+6mn-14m-14n+16
\end{align*} 
which simplify to 
\begin{align*}
n+3m & \ge m^2+4, \cr 
m+3n & \ge n^2+4.
\end{align*} 
Since both inequalities need to hold, by adding them we obtain $0\ge (m-2)^2+(n-2)^2$ leading to $m=n=2$. The inequality for Case 2 is 
$$3n^2+3m^2+6mn-10m-10n+8 \ge 3m^2+3n^2+4mn-8m-8n+8$$
which simplifies to 
\begin{align*}
mn-m-n & \ge 0 \cr 
(m-1)(n-1) & \ge  1.
\end{align*} 
This inequality is satisfied whenever $m,n\ge 2$. In conclusion:
\vskip2mm
\noindent {\bf Conclusion for $\mathbf{d}$.}  For complete bipartite graphs $B_{m,n}$ with $m+n\ge 5$, neither the truncated nor the graph metric lie in $PCUT_{m+n}$.
\vskip2mm
It is known that the graph metrics on $B_{2,2}$ and $B_{1,n}$, $n\ge 1$, lie in the cut cone. Note that $B_{1,n}$ is the Star Graph $S(n)$ which we analyze in the proof of Theorem \ref{Main Result on non-embeddings into the pair-cut-cone}.
\end{example}
\vskip3mm
\begin{example}[{\bf Linear Graphs $L_n$}] 
By a linear graph $L_n$ we mean a closed line segment whose endpoints are a pair of vertices, and which contains an additional $n-2$ interior points as vertices. By necessity, $n\ge 2$. Without loss of generality we shall assume that the segment is $[1,n]\subset \mathbb R$ with the vertices being the integers in this segment. Then the graph metric $d_1$ on $L_n$ is the $\ell_1$-metric on $\mathbb R$, i.e. $d_1(i,j) = |i-j|$ and so clearly $d_1\in CUT_n$ for all $n\ge 2$. From this we can compute the star-trace of $d_1$ for any vertex. If $2i\le n$ then 
\begin{align*}
s_i & = 2\sum_{j=1}^{i-1} j + \sum_{j=2i}^{n}(j-i), \cr 
& = i(i-1) + \sum_{j=0}^{n-2i}(j+i), \cr 
& \textstyle = i(i-1) + (n-2i+1)i + \frac{1}{2}(n-2i+1)(n-2i), \cr
& \textstyle  = i^2-i(n+1)+\frac{1}{2}n(n+1), \qquad 2i\le n. 
\end{align*}
If $2i>n$ then by symmetry $s_i = s_{n-i+1}$ with $2(n-i+1)\le n+1$, leading to 
$$\textstyle s_i = (n-i+1)^2 - (n-i+1)(n+1) + \frac{1}{2}n(n+1) = i^2-i(n+1)+\frac{1}{2}n(n+1), \, 2i>n.$$
The remaining case of $2i=n+1$ is special and leads to $s_i=i^2-i$, which the curious reader can check again agrees with the same formula obtained in the other cases. In conclusion
$$\textstyle  s_i = i^2-i(n+1)+\frac{1}{2}n(n+1), \qquad \text{ for all }i\in V_n.$$
From this we easily obtain the trace of $d_1$:
\begin{align*}
\text{Tr}(d_1)& = \frac{1}{2} \sum_{i=1}^n s_i, \cr
& \textstyle =  \frac{1}{2} \left( \sum_{i=1}^n i^2 - (n+1)\sum_{i=1}^n i + \frac{1}{2}n^2(n+1)\right), \cr 
& \textstyle = \frac{1}{12}n(n+1)(2n+1) -\frac{1}{4}n(n+1)^2+ \frac{1}{4}n^2(n+1), \cr 
& \textstyle  =  \frac{1}{6}(n^3-n).
\end{align*}
For a pair of vertices $1\le i<j\le n$, inequality \eqref{Main inequality for the coordinates of wsq to be nonnegative} becomes:
\begin{align*}
\textstyle (n-2)(i^2+j^2-(i+j)(n+1)+n(n+1)) & \textstyle\ge (n-2)(n-4)(j-i) + \frac{2}{6}(n^3-n).
\end{align*} 
These inequalities have to be satisfied for all values of $1\le i<j\le n$. Choosing $i=1$ and $j=n$ leads to 
\begin{align*}
\textstyle (n-2)(1+n^2-(1+n)(n+1)+n(n+1)) & \textstyle\ge (n-2)(n-4)(n-1) + \frac{2}{6}(n^3-n), \cr  
\textstyle n^3-3n^2+2n &\ge \textstyle \textstyle \frac{4}{3}n^3-7n^2+\frac{41}{3}n-8, \cr 
4n^2 +8 & \ge \textstyle \frac{1}{3}n^3-7n^2+\frac{35}{3}n,  
\end{align*}
which is violated for all $n\ge 9$. 
\vskip2mm
\noindent {\bf Conclusion for $\mathbf{d_1}$.} {\em Let $d_1$ be the graph metric on the linear graph $L_n$. Then for any $n\ge 9$, $d_1\notin PCUT_n$, but $d_1\in CUT_n$ for all $n\ge 2$. } 
\vskip3mm
Turning to the truncated metric $d_0$, and assuming that $n\ge 2$, we obtain 
$$s_i = \left\{ 
\begin{array}{cl}
2n-3 & \quad ; \quad i=1,n, \cr
2n-4 & \quad ; \quad i=2,\dots, n-1 \text{ and } n\ge 3. 
\end{array} 
\right.   
$$
From this one can easily find the trace of the truncated metric: 
$$\text{Tr}(d_0) = \left\{
\begin{array}{cl}
1 & \quad ; \quad n=2, \cr
(n-1)^2 & \quad ; \quad n\ge 3. 
\end{array}
\right.
$$
The smaller star traces occur for the ``interior'' vertices, that is vertices not equal to $1$ and $n$, and so we first consider their corresponding inequality \eqref{Main inequality for the coordinates of wsq to be nonnegative} with $n\ge 4$. The right-hand side is smallest when we consider a pair of interior vertices, while the right-hand side is largest if the vertices don't share an edge. For there to be two such vertices, we require $n\ge 5$. For this case \eqref{Main inequality for the coordinates of wsq to be nonnegative} becomes:
\begin{align*}
(n-2)(4n-8) & \ge 2(n-2)(n-4)+ 2(n-1)^2  \cr
4n^2-16n +16 & \ge 4n^2-16n + 18  \cr
0 & \ge 2,   \cr
\end{align*}
which is clearly violated.  
\vskip2mm
\noindent {\bf Conclusion for $\mathbf{d_0}$.} {\em The truncated metric $d_0$ on the linear graph $L_n$ does not belong to $PCUT_n$ for any $n\ge 5$.}
\vskip2mm
On the other hand, $d_0$ lies in the cut cone $CUT_n$ for all $n\ge 2$, as can be seen from this decomposition:
$$d_0 = \frac{1}{2}\left( \delta_{\{1\}} + \left(\sum_{i=1}^{n-1} \delta_{\{i,i+1\}} \right) + \delta_{\{n\}}\right) \in CUT_n.$$
In particular, $d_ 0 - \frac{1}{2}\delta_{\{1\}}-\frac{1}{2}\delta_{\{n\}}\in PCUT_n$. 
\end{example} 
\section{Simple graphs with no \sig metric in $PCUT_n$} \label{Section with applications of main theorem to simple graphs}
We return to simple graphs $G=(V_n, E)$ and Question \ref{Question about L1-embeddability of simple graphs, as stated in the introduction, into the pair cut cone}, which asks when $G$ possesses a \sigg-metric $d$ that lies in $PCUT_n$. This question requires a two-pronged approach: Firstly, one chooses a \sigg-metric $d$ on $V_n$, and secondly, one checks if $d\in PCUT_n$  with the help of Theorem \ref{Main inequalitis on star-traces and traces of metrics in the pair-cut-cone}. This process is complicated by the fact that a typical graph $G$ admits infinitely many \sigg-metrics, and as we show next, some may and some may not lie in $PCUT_n$. 
\subsection{Embeddability of different \sigg-metrics}
Given a simple graph $G$, it is natural to ask if the property for a \sigg-metric of $G$ to lie in $PCUT_n$ is shared by all \sigg-metrics of $G$. In other words, if one \sigg-metric of $G$ lies in $PCUT_n$, do all of them? The answer to this is unfortunately negative, complicating Question \ref{Question about L1-embeddability of simple graphs, as stated in the introduction, into the pair cut cone}. 
\begin{example}
Consider the graph $G=(V_7, E)$ in Figure \ref{Graph1}. We shall consider the two \sigg-metrics $d_0$ and $d_1$ on $G$, being the truncated metric and graph metric respectively (cf. Example \ref{Example of SIG metrics from the introduction}).  
\begin{figure}[h]
\includegraphics[width=10cm]{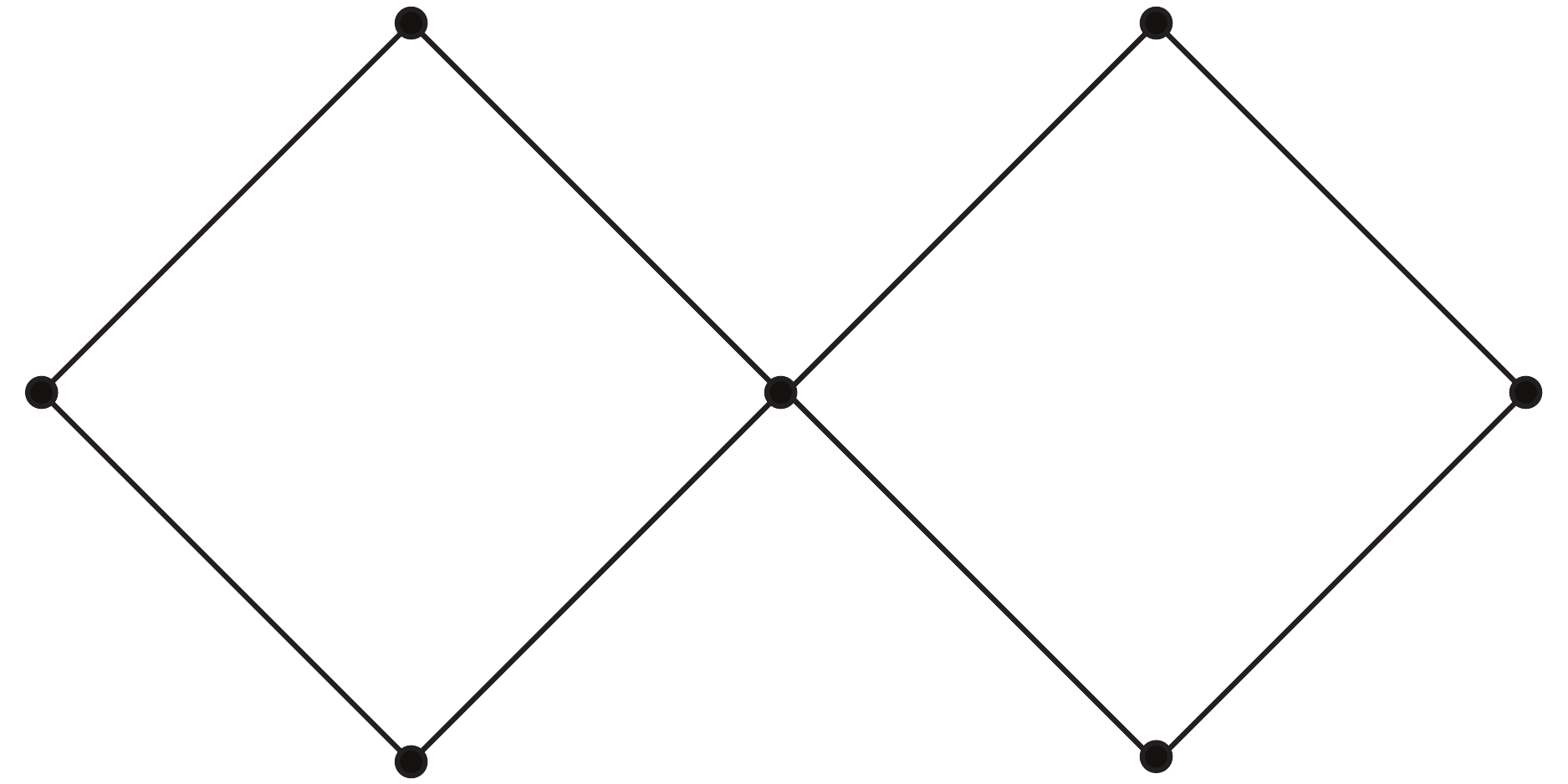}
\put(-145,53){$1$}
\put(-213,-15){$5$}
\put(-78,-15){$4$}
\put(-78,148){$2$}
\put(-213,148){$7$}
\put(-1,67){$3$}
\put(-290,67){$6$}
\caption{A graph with 7 vertices.   } \label{Graph1}
\end{figure}
Considering firstly the truncated metric $d_0$, we find: 
$$\text{Tr}(d_0) = 8\cdot 1 + 13\cdot 2 = 34, \quad s_1 = 8, \quad s_i = 10, i=2, \dots, 7. $$
We find that the inequalities \eqref{Main inequality for the coordinates of wsq to be nonnegative} fail for the pairs $\{i,j\} = \{1,3\}, \{1,6\}$:
$$
18 = 8+10  = s_1+s_j \ge (7-4)d(1,j) + \frac{2}{7-2}\text{Tr}(d_0) = \textstyle 19\frac{3}{5}, \quad i=1, j=3,6,
$$
showing that $d_0\notin PCUT_7$. Next we perform the same computation for the graph metric $d_1$, whose values are given by 
$$
\begin{array}{llllll}
d(1,2) =1,\,  & d(1,3) = 2,\,  & d(1,4)=1,\,  & d(1,5) = 1,\,  & d(1,6) = 2,\,  & d(1,7) = 1, \cr 
d(2,3) = 1,\,  & d(2,4)=2,\,  & d(2,5) = 2,\,  & d(2,6) = 3,\,  & d(2,7) = 2, & \cr 
d(3,4)=1,\,  & d(3,5) = 3,\,  & d(3,6) = 4,\,  & d(3,7) = 3, & & \cr 
d(4,5) = 2,\,  & d(4,6) = 3,\,  & d(4,7) = 2, & & & \cr 
d(5,6) = 1,\,  & d(5,7) = 2, & & & & \cr 
d(6,7) = 1, & & & & & \cr 
\end{array}
$$
We see that Tr$(d_1) = 40$ and 
$$s_1 = 8, \quad  s_2 = s_4 = s_5 = s_7= 11, \quad  s_3 = s_6 = 14. $$
The inequalities that need to be satisfied for all pairs $\{i,j\}$ are 
$$ s_i +s_j \ge 3\cdot d(i,j) + 16.$$
It is straightforward to verify that all 21 inequalities hold:
\begin{align*}
 19 = s_1+s_j  & \ge 3+ 16 = 19, \quad j=2,4,5,7, \cr 
 22 = s_1+s_j  & \ge 6+ 16 = 22, \quad j=3, 6, \cr
 22 = s_i+s_j  & \ge 6+ 16 = 22, \quad i\ne j, \, i,j \in\{2,4,5,7\} , \cr
25 = s_i+s_j  & \ge 3+ 16 = 19, \quad \{i,j\} = \{2,3\}, \{3, 4\}, \{6,7\}, \{5,6\},  \cr
25 = s_i+s_j  & \ge 9+ 16 = 25, \quad \{i,j\} = \{2,6\}, \{4, 6\}, \{3, 5\}, \{3,7\},  \cr
28 = s_3+s_6  & \ge 12+ 16 = 28. 
\end{align*}
It follows that $d_1\in PCUT_7$. 
\vskip1mm
In conclusion, the graph $G$ from Figure \ref{Graph1} admits two \sigg-metrics $d_0$ and $d_1$ with $d_0\notin PCUT_7$ and $d_1\in PCUT_7$.  
\end{example}
The following is a re-statement of Theorem \ref{Theorem stated in the introduction with an example of a graph none of whose SIG metrics lie in the pair-cut cone} from the introduction. 
\begin{theorem}\label{Main Result on non-embeddings into the pair-cut-cone}
For any $n\ge 5$ there exist connected simple graphs for which no \sigg-metric lies in the pair-cut cone $PCUT_n$.   
\end{theorem}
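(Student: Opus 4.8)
The plan is to first determine \emph{all} \sigg-metrics of the star graph explicitly, and then to check them against the necessary condition of Corollary \ref{Corollary with main ineqaulity for PCUTn membership} for membership in $PCUT_n$ (equivalently, against the inequalities \eqref{Main inequality for the coordinates of wsq to be nonnegative} of Theorem \ref{Main inequalitis on star-traces and traces of metrics in the pair-cut-cone}). Let $G$ be the star with a central vertex $c$ and leaves $V_n\setminus\{c\}$, so $G$ has $n$ vertices. Given a \sigg-metric $d$ of $G$, set $a_i=d(c,i)$ for each leaf $i$ and $A=\sum_{i\ne c}a_i$; since $d$ is a genuine metric, $a_i>0$ and $d(i,j)>0$ for all distinct $i,j$.

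First I would compute the radii of influence. For a leaf $k$, $r_k=\min\bigl(a_k,\min_{\ell}d(k,\ell)\bigr)$, the inner minimum taken over leaves $\ell\ne k$. The key claim is $r_k=a_k$. If not, the inner minimum is $<a_k$, attained at some leaf $j$, so $r_k=d(k,j)$; but $k$ and $j$ are non-adjacent, so the \sigg-inequalities \eqref{The SIG inequalities from the introduction} give $d(k,j)\ge r_k+r_j=d(k,j)+r_j$, forcing $r_j\le 0$, a contradiction. Hence $r_k=a_k$ for every leaf $k$. Then, for leaves $i\ne j$, the non-edge \sigg-inequality gives $d(i,j)\ge r_i+r_j=a_i+a_j$, while the triangle inequality gives $d(i,j)\le d(i,c)+d(c,j)=a_i+a_j$; so $d(i,j)=a_i+a_j$. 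Thus every \sigg-metric of $G$ is the ``star metric'' $d_{\vec a}$ given by $d_{\vec a}(c,i)=a_i$ and $d_{\vec a}(i,j)=a_i+a_j$, for some positive reals $a_i$. (The edge \sigg-inequalities $d(c,k)<r_c+r_k$ reduce to $\min_i a_i>0$ and are automatic, and every $d_{\vec a}$ is conversely a \sigg-metric of $G$, though we will not need this.)

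It then remains to rule out $d_{\vec a}\in PCUT_n$. The star-trace of the center is $s_c=\sum_{i\ne c}a_i=A$, and since each $a_i$ appears in exactly $n-1$ of the pairwise distances of $d_{\vec a}$, we get $\text{Tr}(d_{\vec a})=A+(n-2)A=(n-1)A$. If $d_{\vec a}$ were in $PCUT_n$, Corollary \ref{Corollary with main ineqaulity for PCUTn membership} applied at the vertex $c$ would force $(n-2)s_c\ge \text{Tr}(d_{\vec a})$, i.e.\ $(n-2)A\ge (n-1)A$, which is impossible since $A>0$. Hence no \sigg-metric of $G$ lies in $PCUT_n$, proving the theorem. (Alternatively, invoke Theorem \ref{Main inequalitis on star-traces and traces of metrics in the pair-cut-cone} directly: the $n-1$ inequalities \eqref{Main inequality for the coordinates of wsq to be nonnegative} for the pairs $\{c,k\}$ simplify to $a_k\ge A/(n-2)$ for every leaf $k$, and summing over the $n-1$ leaves gives $A\ge (n-1)A/(n-2)$, a contradiction.)

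The one genuine obstacle is the characterization of the \sigg-metrics of the star, and within it the identity $r_k=a_k$ for leaves --- precisely the place where the \sigg-inequalities for non-adjacent pairs and the triangle inequality are played against each other, as anticipated in the introduction. Once this is in hand, the failure of $PCUT_n$-membership is forced by the extreme ``centralization'' of the star: its center has star-trace only $A$, whereas $PCUT_n$-membership (via Corollary \ref{Corollary with main ineqaulity for PCUTn membership}) would demand star-trace at least $\text{Tr}(d)/(n-2)=(n-1)A/(n-2)>A$.
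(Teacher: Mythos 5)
Your proof is correct and takes essentially the same route as the paper: both work with the star graph, show that every \sigg-metric must satisfy $d(i,j)=a_i+a_j$ on pairs of leaves by playing the non-edge \sigg-inequality against the triangle inequality, and then derive a contradiction from the $PCUT_n$ membership inequalities. Two minor points in your favor: your justification that $r_k=a_k$ for each leaf is more careful than the paper's one-line appeal to valence (the minimum defining $r_k$ ranges over all vertices, not just neighbors), and finishing via Corollary \ref{Corollary with main ineqaulity for PCUTn membership} at the central vertex avoids the paper's WLOG ordering of the $a_i$.
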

\begin{proof} 
The theorem is proved by exhibiting an example of a graph with properties as claimed. Consider the star graph $S(n)$ as in Figure \ref{StarGraph}, with $n\ge 4$. We label the central vertex by $0$ and label the peripheral vertices by elements of $V_n = \{1,\dots, n\}$.  
\begin{figure}[h]
\includegraphics[width=6cm]{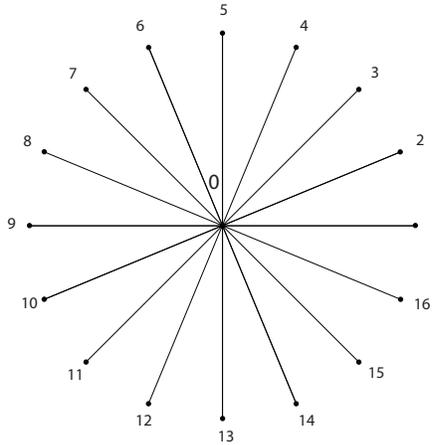}
\caption{The star graph $S(n)$ shown here with $n=16$. The central vertex is labeled $0$ while the peripheral vertices are enumerated by the elements of $V_{16}=\{1,\dots, 16\}$.} \label{StarGraph}
\end{figure} \label{Star Graph from the introduction}
\vskip3mm
\noindent Let $d$ be a \sigg-metric on $W_n= \{0,1,\dots, n\}=\{0\} \cup V_n$ and let $a_i = d(0,i)$. Since each vertex $i \in V_n$ has valence 1, then $r_i = a_i$ for all $i\in V_n$. By the inherent symmetry of $S(n)$, we may assume without loss of generality that 
\begin{equation} \label{assumption on the distances in the star graph}
a_1\le a_2\le \dots \le a_n.
\end{equation}
Thus $r_0 = a_1$. Since $d$ is a \sigg-metric for $G$ and since there are no edges between any two vertices $i,j\in V_n$, it follows from a combination of the triangle inequality and a \sigg-inequality \eqref{The SIG inequalities from the introduction} that  
$$a_i+a_j \le d(i,j) \le  a_i+a_j, \qquad \{i,j\}\subset V_n.$$
Accordingly, $d(i,j) = a_i+a_j$ for $\{i,j\}\subset V_n$. 

We turn to calculating the trace and the star traces of this metric:
\begin{align*}
\text{Tr}(d) & = \sum _{i=1}^n d(0,i) + \sum _{1\le i <j \le n} d(i,j), \cr 
&= \left( \sum _{i=1}^n a_i\right) + \left(\sum _{1\le i <j \le n} a_i+a_j\right), \cr 
&= \left( \sum _{i=1}^n a_i\right) + (n-1)\left(\sum _{i=1}^n a_i    \right), \cr 
&= n \left( \sum _{i=1}^n a_i\right).
\end{align*} 
Let $\alpha = \sum _{i=1}^na_i$ so that Tr$(d) = n\cdot \alpha$. The star traces are also straightforward to compute:
\begin{align*}
s_0& = \sum _{i=1}^n d(0,i) = \sum _{i=1}^n a_i = \alpha, \cr 
& \cr 
s_i & = d(0,i) + \sum _{j\ne 0,i} d(i,j), \cr 
& = a_i + \sum _{j\ne 0,i} (a_i +a_j), \cr 
& = a_i + (n-1)a_i + \sum _{j\ne 0,i} a_j, \cr
& = (n-1)a_i + \sum _{j=1}^n a_j, \cr
& = (n-1)a_i + \alpha. 
\end{align*}
Consider now inequality \eqref{Main inequality for the coordinates of wsq to be nonnegative} for $s_0$ and $s_1$, keeping in mind that $S(n)$ has $n+1$ vertices:
\begin{align*}
s_0+s_1 & \ge (n-3)\, d(0,1) + \frac{2}{n-1}\text{Tr}(d), \cr 
\alpha + (n-1)a_1+\alpha & \ge (n-3)\, a_1 + \frac{2}{n-1}n\alpha, \cr 
\alpha (2n-2) + a_1(2n-2) & \ge 2n\alpha, \cr 
(n-1)a_1 & \ge \alpha, \cr 
(n-1)a_1 & \ge a_1+a_2+\dots +a_n \ge a_1+a_1+\dots +a_1, \cr
(n-1)a_1 & \ge na_1.
\end{align*}
This is a contradiction since $a_1=d(0,1)>0$. It follows that $d\notin PCUT_{n+1}$ and since $d$ was an arbitrary \sigg-metric for $S(n)$, we conclude that $S(n)$ is not a \sigg-graph for any metric $d\in PCUT_{n+1}$, proving the theorem.  
\end{proof}
\section{The Full Cut Matrix} \label{Section on the full cut matrix}
Having established when a metric $d$ on $V_n$ belongs to the pair-cut cone $PCUT_n$, we turn to the question of when $d$ lies in the cut cone $CUT_n$. Here we are not able to provide a full characterization (recall that doing so is an NP-complete problem), but we are able to give a set of conditions that guarantees that $d\in CUT_n$. For these purposes we define the {\em full cut-matrix $S$}, a matrix of dimension ${n \choose 2} \times (2^n-2)$ whose columns are enumerated by the non-trivial cuts of $V_n$, and whose rows are enumerated by pairs in $V_n$. The non-trivial cuts of $V_n$ and the pairs of $V_n$ receive the lexicographic order, and with that convention, the $(k,\ell)$-th entry of $S$ is $\delta_C(\{i,j\})$ if $\{i,j\}$ is the $k$-th pair and $C$ is the $\ell$-th cut. 

Let $m = {n\choose 2}$. An explicit computation shows
\begin{equation} \label{Equation for product of S with Stau}
S\cdot S^\tau = 2^{n-2}(I_m +J_m), 
\end{equation}
with $J_m$, as before, being the $m\times m$ matrix all of whose entries equal 1. Indeed, the ${n\choose 2}\times {n\choose 2}$ matrix $S\cdot S^\tau$ has its row and columns indexed by pairs $\{i,j\}\subset V_n$ with respect to the lexicographic ordering. If $\{i,j\}$ and $\{k,\ell\}$ are the $p$-th and $q$-th pair respectively, then the $(p,q)$-th entry of $S\cdot S^\tau$ is given by 
$$(S\cdot S^\tau)_{p,q} = \sum _{C\subset V_n} \delta_C(i,j)\cdot \delta_C(k,\ell). $$
If $\{i,j\} = \{k,\ell\}$ then
\begin{align*}
(S\cdot S^\tau)_{p,p} & = \sum _{C\subset V_n} \delta_C(i,j),\cr
&\textstyle  = \left(\sum_{i\in C, j\notin C} 1\right) + \left(\sum_{i\notin C, j\in C} 1\right), \cr
& = 2^{n-2} + 2^{n-2},\cr
&  = 2^{n-1}. 
\end{align*}
If $\{i,j\} \ne \{k,\ell\}$ and $|\{i,j\}\cap \{k,\ell\}| = 0$, then we obtain 
\begin{align*}
(S\cdot S^\tau)_{p,q} & = \sum _{C\subset V_n} \delta_C(i,j) \delta_C(k,\ell),\cr 
& = \sum _{i,k\in C, \,  j,\ell\notin C} 1 + \sum _{i,\ell\in C, \,  j,k\notin C} 1 + \sum _{j,k\in C, \,  i,\ell\notin C} 1 + \sum _{j,\ell\in C, \,  i,k\notin C} 1, \cr 
& = 2^{n-4} + 2^{n-4} + 2^{n-4} + 2^{n-4}, \cr 
& = 2^{n-2}.
\end{align*}
Lastly, if $\{i,j\} \ne \{k,\ell\}$ but $|\{i,j\}\cap \{k,\ell\}| = 1$, say $i=k$, then  
\begin{align*}
(S\cdot S^\tau)_{p,q} & = \sum _{C\subset V_n} \delta_C(i,j) \delta_C(i,\ell), \cr 
& = \sum _{i\in C, \,  j,\ell\notin C} 1 + \sum _{j,\ell\in C, \,  i\notin C} 1, \cr 
& 2^{n-3} + 2^{n-3}, \cr 
& = 2^{n-2}.
\end{align*}
This proves equation \eqref{Equation for product of S with Stau}. The matrix $S\cdot S^\tau$ is invertible as one can explicitly check:  
$$ (SS^\tau) ^{-1} = \frac{1}{2^{n-2}} \left(I_m-\frac{1}{m+1}J_m\right),$$
because 
\begin{align*}
(SS^\tau) \cdot \frac{1}{2^{n-2}} & \left(I_m-\frac{1}{m+1}J_m \right)  = (I_m+J_m)\cdot \left(I_m-\frac{1}{m+1}J_m \right), \cr 
& = I_m    -\frac{1}{m+1}J_m  + J_m -\frac{1}{m+1} J_m \cdot J_m, \cr 
& = I_m    -\frac{1}{m+1}J_m  + J_m -\frac{m}{m+1} J_m, \cr 
& = I_m.
\end{align*}
We use this realization we can find a right inverse of $S$, which we shall denote by $S_r$ and which is given by 
$$S_r = S^\tau (SS^\tau)^{-1}\qquad \text{ so that } \qquad SS_r = I_m.$$
Given a metric $d$ on $V_n$, we view it again as the column vector 
$$d = [d(1,2), d(1,3), \dots, d(n-1,n)]^\tau \in \mathbb R^{n\choose 2}.$$ 
Then $d$ belongs to the cut cone $CUT_n$ if and only if the system $Sw=d$ has a solution $w\in \mathbb R^{2^n-2}$ with $w\ge 0$. One solution of $Sw=d$ is given $\mathbf{w} = S_rd$ since 
$$S\mathbf w = SS_r d = I_md = d.$$ 
Before proceeding, we make a definition. To  motivate it, recall that the star trace $s_i = \sum_{j\ne i} d(i,j)$ played an important role in considerations pertaining to the pair-cut cone $PCUT_n$ from Section \ref{Section on the square cut matrix}. The semi-metric $\delta_{\{i\}}$, viewed as a vector in $\mathbb R^m$, has $\{k,\ell\}$-th coordinate equal $1$ if $i\in \{k,\ell\}$, and zero otherwise. It follows then that $s_i = d\cdot \delta_{\{i\}}$, with the right-hand side being the Euclidean inner product on $\mathbb R^m$. Motivated by this observation, we define the {\em $C$-trace $s_C$} for any cut $C\subset V_n$ as 
$$s_C = d\cdot \delta_C   = \sum_{\{i,j\}\subset V_n} \delta_C(\{i,j\})\cdot d(i,j) = \sum_{i\in C,\, j\notin C}d(i,j). $$
Incidentally, 
\begin{align*}
s_i+s_j & = \sum _{k\ne i}d(i,k) + \sum_{\ell\ne j} d(j,\ell) \cr
& = \left(\sum _{a\in\{i,j\}, b\notin\{i,j\}} d(a,b)\right) + 2d(i,j)  \cr 
& =  s_{\{i,j\}} + 2d(i,j), 
\end{align*}
and with this relation, inequality \eqref{Main inequality for the coordinates of wsq to be nonnegative} from Theorem \ref{Main inequalitis on star-traces and traces of metrics in the pair-cut-cone} can be rewritten as 
$$s_{\{i,j\}} \ge (n-6)\, d(i,j) + \frac{2}{n-2}\text{Tr}(d).$$

After this short digression, we are in a position to compute the $C$-coordinate $\mathbf{w}_C$ of $\mathbf w = S_rd$:
\begin{align} \label{Coordinates of the particular solution of the full cut matrix}
\mathbf w_C & = \left( S_r d\right)_C \cr 
& = \left( S^\tau (SS^\tau)^{-1} d\right)_C \cr
& = \frac{1}{2^{n-2}} \left( S^\tau \left(I_m - \frac{1}{m+1}J_m \right)  d\right)_C \cr
& = \frac{1}{2^{n-2}} \left( S^\tau d\right)_C - \frac{1}{m+1}  \left(S^\tau J_m d \right)_C \cr
& = \frac{1}{2^{n-2}} s_C - \frac{\text{Tr}(d)}{m+1}  \left(S^\tau \mathbf{1}_m \right)_C \cr
& = \frac{1}{2^{n-2}} \left( s_C - \frac{\text{Tr}(d)}{m+1} |C|(n-|C|) \right).
\end{align}
In the above we used the fact that the row of $S^\tau$ corresponding to $C$ is $\delta_C^\tau$, so that $(S^\tau d)_C = \delta_C\cdot d = s_C$. We also used the easy-to-verify fact that $J_md = \text{Tr}(d) \cdot \mathbf{1}_m$. Lastly, $(S^\tau \mathbf{1}_m)_C = \delta_C\cdot \mathbf{1}_m$ which equals the sum of the coordinates of $\delta_C$. The non-vanishing coordinates of $\delta_C$, all of which equal to 1, correspond to pairs $\{i,j\}$ with $i\in C$ and $j\notin C$, showing that $(S^\tau \mathbf{1}_m)_C = |C|\cdot(n-|C|)$. This proves the next theorem.
\vskip3mm
\begin{theorem} \label{Main theorem for the full cut matrix and inequalities for a metric to lie in the full cut cone}
If for every cut $C\subset V_n$ we obtain 
\begin{equation} \label{Sufficient condition for embedding in full cut cone}
s_C \ge \frac{|C|(n-|C|)}{{n\choose 2} + 1} \cdot \text{Tr}(d)	
\end{equation} 
then $(V_n,d)\in CUT_n$. 
\end{theorem}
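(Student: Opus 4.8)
The plan is to exhibit an explicit non-negative solution to the system $Sw = d$. The natural candidate, and the one the preceding computation in \eqref{Coordinates of the particular solution of the full cut matrix} hands to us, is the particular solution $\mathbf w = S_r d = S^\tau(SS^\tau)^{-1}d$, whose $C$-coordinate we have already computed to be
\begin{equation*}
\mathbf w_C = \frac{1}{2^{n-2}}\left( s_C - \frac{\text{Tr}(d)}{m+1}\,|C|(n-|C|)\right), \qquad m = \binom{n}{2}.
\end{equation*}
Since $SS_r = I_m$, this $\mathbf w$ satisfies $S\mathbf w = d$ for \emph{any} metric $d$; the content of the theorem is precisely that the hypothesis \eqref{Sufficient condition for embedding in full cut cone} is exactly the statement that every coordinate $\mathbf w_C$ is non-negative. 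So the proof is essentially immediate: assume \eqref{Sufficient condition for embedding in full cut cone}, namely $s_C \ge \frac{|C|(n-|C|)}{m+1}\text{Tr}(d)$ for every non-trivial cut $C$; then each $\mathbf w_C \ge 0$ because $2^{n-2} > 0$; hence $\mathbf w$ is a non-negative solution of $Sw = d$, and by the definition of the cut cone (equivalently, by the characterization that $d \in CUT_n$ iff $Sw = d$ has a non-negative solution, which is the content of Theorem \ref{Theorem about the equivalence of ell1-embeddability and belonging to the cut cone}), we conclude $d \in CUT_n$.

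The only points that require a sentence of care are bookkeeping ones. First, one should note that the trivial cuts $C = \emptyset$ and $C = V_n$ contribute $|C|(n-|C|) = 0$ and $s_C = 0$, so they are automatically consistent and it suffices to range over the $2^n - 2$ non-trivial cuts, which are exactly the columns of $S$. Second, one should observe that $\mathbf w$ is a vector indexed by these non-trivial cuts, so ``$\mathbf w \ge 0$'' means precisely ``$\mathbf w_C \ge 0$ for all non-trivial $C$,'' which is what the hypothesis delivers coordinate by coordinate. Third, it is worth recording that the embedding dimension is then $2^n - 2$ (or fewer, after discarding cuts with $\mathbf w_C = 0$), consistent with the explicit $\ell_1$-embedding recipe given after Theorem \ref{Theorem about the equivalence of ell1-embeddability and belonging to the cut cone}.

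There is, frankly, no serious obstacle here: the genuine work was already done in deriving formula \eqref{Coordinates of the particular solution of the full cut matrix}, i.e.\ in identifying the right inverse $S_r = S^\tau(SS^\tau)^{-1}$ via the identity $SS^\tau = 2^{n-2}(I_m + J_m)$ and inverting $I_m + J_m$. Given that, the theorem is just the remark that the particular solution $S_r d$ happens to be non-negative exactly when the stated inequalities hold. If one wanted to present the argument in a self-contained way without citing \eqref{Coordinates of the particular solution of the full cut matrix}, the main step to reproduce would be the computation $(S_r d)_C = \frac{1}{2^{n-2}}(s_C - \frac{\text{Tr}(d)}{m+1}|C|(n-|C|))$, which uses $(S^\tau d)_C = \delta_C \cdot d = s_C$, the identity $J_m d = \text{Tr}(d)\mathbf 1_m$, and $(S^\tau \mathbf 1_m)_C = \delta_C \cdot \mathbf 1_m = |C|(n-|C|)$; but since all of this precedes the theorem statement in the excerpt, the proof body can simply invoke it.
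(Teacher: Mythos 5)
Your proposal is correct and is essentially identical to the paper's own argument: the paper derives the formula $\mathbf w_C = \frac{1}{2^{n-2}}\bigl(s_C - \frac{\text{Tr}(d)}{m+1}|C|(n-|C|)\bigr)$ for the particular solution $\mathbf w = S_r d$ in the computation immediately preceding the theorem, and the theorem is then exactly the observation that the hypothesis makes every coordinate non-negative, so $d$ is a non-negative combination of cut metrics and hence lies in $CUT_n$. Your bookkeeping remarks (trivial cuts, meaning of $\mathbf w \ge 0$) are fine; note only that the conclusion needs nothing more than the definition of $CUT_n$, not the $\ell_1$-embeddability equivalence.
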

For $C=\{i,j\}$ the inequality \eqref{Sufficient condition for embedding in full cut cone} becomes 
\begin{align*}
s_{\{i,j\}} & \ge \frac{2(n-2)}{\frac{1}{2}(n^2-n)+1} \text{Tr}(d), \cr
s_i+s_j &\ge 2d(i,j) + \frac{2(n-2)}{\frac{1}{2}(n^2-n)+1} \text{Tr}(d), 
\end{align*} 
which is rather different from inequality \eqref{Main inequality for the coordinates of wsq to be nonnegative} in Theorem \ref{Main inequalitis on star-traces and traces of metrics in the pair-cut-cone}. This is what one would expect, for else any metric $d$ that failed one or some of the inequalities \eqref{Main inequality for the coordinates of wsq to be nonnegative} would also fail at least one of the inequalities \eqref{Sufficient condition for embedding in full cut cone}. But of course we expect there to be lots of metrics that lie in the cut cone but that don't lie in the pair-cut cone. 
\begin{example}
Consider again the complete graph $K_n$ on $n$ vertices equipped with the graph metric $d$. Then since $d(i,j)  =1$ for all $i,j\in V_n$, for any cut $C\subset V_n$ we obtain $s_C = |C|(n-|C|)$. The trace of $d$ remains equal to ${n\choose 2}$ making the inequalities \eqref{Sufficient condition for embedding in full cut cone} become 
\begin{align*}
|C|(n-|C|) & \textstyle \ge \frac{|C|(n-|C|)}{{n \choose 2} + 1} \cdot {n\choose 2}, 
\end{align*}
or simply $ {n\choose 2} + 1 \ge {n \choose 2}$, valid for any choice of $C\subset V_n$ and any $n\ge 1$. Indeed, the graph metric is given by $d=\frac{1}{2}\left(\delta_{\{1\}} + \dots + \delta_{\{n\}} \right)$, which clearly lies in $CUT_n$. 
\end{example}

\section{A basis for the kernel of $S$} \label{Section on finding an explicit basis for the full cut-matrix}
Since $S_{sq}$ is the sub-matrix of $S$ corresponding to pair-cuts and since for $n\ge 5$, $S_{sq}$ is a regular matrix by Theorem \ref{Full rank of the cut matrix}, it follows that rk$(S) = {n\choose 2}$. The Rank-Nullity theorem implies that 
\begin{equation} \label{Dimension of the kernel of the full cut matrix}
	\text{rk } \kerr (S) = \textstyle 2^n-2 -{n\choose 2}.
\end{equation}
The general soluion $w$ of $Sw=d$ is obtain as $\mathbf{w} + w_N$ with $\mathbf w$ as in \eqref{Coordinates of the particular solution of the full cut matrix} and with $w_N\in \kerr(S)$. This puts the onus on understanding the kernel of $S$. We are able to determine a basis for $\kerr(S)$, a computation we present next. 

Recall that $\delta_C = \delta_{V_n-C}$ for any cut $C\subset V_n$. It is easy to check that if in the lexicographic ordering of the non-trivial cuts of $V_n$, $C$ is the $k$-th element, then $V_n- C$ is the $(2^n-1-k)$-th element. For later use, we write $\bar C = V_n-C$, an operation which we can think of as an involution on the set of cuts of $V_n$. 
\subsection{The skew-symmetric kernel generators}
Let $\{e_1, \dots, e_{2^n-2}\}$ be the standard basis for $\mathbb R^{2^n-2}$ and for $k=1,\dots, 2^{n-1}-1$, let 
\begin{equation} \label{The skew-symmetric generators of the kernel of the full cut matrix}
\varphi_k = e_k - e_{2^n-1-k}.
\end{equation} 
If $C\subset V_n$ is the $k$-th cut in the lexicographic order of cuts, then we can also write $\varphi_ k  = \delta_C - \delta_{\bar C}$, viewing each cut as one of the standard basis vectors of $\mathbb R^{2^n-2}$ (not to be confused with our earlier convention of viewing $\delta_C$ as a vector in $\mathbb R^{n\choose 2}$). This correspondence reflects the lexicographic ordering of cuts, and thus $\delta_C$ corresponds to $e_k$ if $C$ is the $k$-th cut in the lex ordering. It is trivial to verify that the set $\{\varphi_1, \dots, \varphi_{2^{n-1}-1}\}$ is linearly independent and contained in the kernel of $S$, thus proving:
\begin{proposition} \label{Proposition generating the skew-symmetric kernel elements}
The vectors $\varphi_1, \dots, \varphi_{2^{n-1}-1}$ generate a subspace in $\kerr (S_n)$ of dimension $2^{n-1}-1$. 
\end{proposition}
\begin{example}
Consider $n=4$, then the vectors $\varphi_1, \dots, \varphi_{7}$ are given by 
\begin{align*}
\varphi_1 & = (1,0,0,0,0,0,0,0,0,0,0,0,0,-1), \cr 
\varphi_2 & = (0,1,0,0,0,0,0,0,0,0,0,0,-1,0), \cr
\varphi_3 & = (0,0,1,0,0,0,0,0,0,0,0,-1,0,0), \cr
\varphi_4 & = (0,0,0,1,0,0,0,0,0,0,-1,0,0,0), \cr
\varphi_5 & = (0,0,0,0,1,0,0,0,0,-1,0,0,0,0), \cr
\varphi_6 & = (0,0,0,0,0,1,0,0,-1,0,0,0,0,0), \cr
\varphi_7 & = (0,0,0,0,0,0,1,-1,0,0,0,0,0,0).
\end{align*}
\end{example}
\subsection{The alternating sum kernel generators}
Here we assume that $n\ge 5$. Let $T\subset V_n$ be a set with $3\le |T| \le n$. Viewing non-trivial cuts of $V_n$ as basis vectors in $\mathbb R^{2^n-2}$ we define 
\begin{equation} \label{The alternating sum generators of the kernel of the full cut matrix}
\psi_T = \sum _{C\subset T} (-1)^{|C|}\, \delta_C \in \mathbb R^{2^n-2}. 
\end{equation} 
The sum above is over all cuts $C$ contained in $T$, which are of course also cuts in $V_n$ since $T\subset V_n$. In the sum we include the empty cut $C=\emptyset$ even though $\delta_C =  0$, as this will play an important role in the proof of Proposition \ref{Proof of alternating sums lying in the kernel} below. When translating between cuts and vectors in $\mathbb R^{2^n-2}$, both of the trivial cuts $\delta_{\emptyset}$ and $\delta_{V_n}$ become the zero vector. To get a sense for these kernel generators, it is instructive to look an example. 
\begin{example} \label{Example for n=4 and the symmetric kernel generators}
Consider $n=4$. The lexicographic order of the nontrivial cuts of $V_4$ is given by (along with the correspondence of cuts with vectors from the standard basis for $\mathbb R^{14}$):
$$\begin{array}{c}
\delta_{\{1\}} = e_1, \, \delta_{\{2\}}=e_2, \, \delta_{\{3\}}=e_3, \, \delta_{\{4\}}=e_4, \cr 
\delta_{\{1, 2\}}=e_5, \, \delta_{\{1, 3\}}=e_6, \, \delta_{\{1, 4\}}=e_7, \, \delta_{\{2, 3\}}=e_8,  \, \delta_{\{2, 4\}}=e_9, \, \delta_{\{3, 4\}}=e_{10},\cr  
\delta_{\{1, 2, 3\}}=e_{11}, \, \delta_{\{1, 2, 4\}}=e_{12}, \, \delta_{\{1, 3, 4\}}=e_{13}, \, \delta_{\{2, 3, 4\}}=e_{14}.
\end{array}$$
The vectors $\psi_T$ are created from the choices of $T$ being $T_1=\{1,2,3\}, T_2=\{1,2,4\}, T_3=\{1,3,4\}, T_4=\{2,3,4\}$ and $T_5=\{1,2,3,4\}$. From these we obtain
\begin{align*}
\psi_{T_1} & = (\overbrace{-1,-1,-1,0}^{\text{Singleton cuts}},\overbrace{1, 1, 0, 1, 0, 0}^{\text{Pair cuts}}, \overbrace{-1, 0,0,0}^{\text{Tripe cuts}})\cr
\psi_{T_2} & = (-1,-1,0,-1,1, 0, 1, 0, 1, 0, 0,-1,0,0)\cr
\psi_{T_3} & = (-1,0,-1,-1,0, 1, 1, 0,0,1, 0,0,-1,0)\cr
\psi_{T_4} & = (0,-1,-1,-1,0,0,0,1,1,1,0,0,0,-1)\cr
\psi_{T_5} & = (-1,-1,-1,-1,1,1,1,1,1,1,-1,-1,-1,-1)
\end{align*}
\end{example}
\begin{proposition} \label{Proof of alternating sums lying in the kernel}
The vector $\psi_T$ belongs to the kernel of $S$ for every subset $T\subset V_n$ with $3\le |T|\le n$. 
\end{proposition}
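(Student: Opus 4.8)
The plan is to verify directly that $S\psi_T = 0$. First I would unwind what $S\psi_T$ is: the columns of the full cut-matrix $S$ are the cut semimetrics $\delta_C$ regarded as vectors in $\mathbb{R}^{n\choose 2}$, so applying $S$ to the standard basis vector of $\mathbb{R}^{2^n-2}$ indexed by the cut $C$ returns that $\delta_C$. Consequently
$$S\psi_T \;=\; \sum_{C\subseteq T}(-1)^{|C|}\,\delta_C \;\in\; \mathbb{R}^{n\choose 2},$$
where now each $\delta_C$ is the cut semimetric (and the trivial cuts $\emptyset$, and $V_n$ when $T=V_n$, contribute the zero vector, consistent with the identification inside $\mathbb{R}^{2^n-2}$). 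The goal then reduces to showing that this vector is zero, i.e. that every one of its ${n\choose 2}$ coordinates vanishes.

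Next I would fix a pair $\{i,j\}\subseteq V_n$ and compute the $\{i,j\}$-th coordinate, which is $\sum_{C\subseteq T}(-1)^{|C|}\delta_C(\{i,j\})$. The key observation is that $\delta_C(\{i,j\})$ depends on $C$ only through $C\cap\{i,j\}$: by \eqref{Definition of cut semi-metric from introduction} it equals $1$ precisely when $|C\cap\{i,j\}|=1$, and equals $0$ otherwise. So I would group the cuts $C\subseteq T$ according to the set $A:=C\cap(\{i,j\}\cap T)$, writing each such $C$ uniquely as $A\sqcup D$ with $D\subseteq T\setminus\{i,j\}$. The coordinate becomes
$$\sum_{\substack{A\subseteq\{i,j\}\cap T\\ |A|=1}}(-1)^{|A|}\sum_{D\subseteq T\setminus\{i,j\}}(-1)^{|D|}\;=\;\Bigl(\sum_{\substack{A\subseteq\{i,j\}\cap T\\ |A|=1}}(-1)^{|A|}\Bigr)\,(1-1)^{\,|T\setminus\{i,j\}|}.$$
Since $3\le|T|$, we have $|T\setminus\{i,j\}|\ge|T|-2\ge1$, so $(1-1)^{|T\setminus\{i,j\}|}=0$ and the coordinate vanishes. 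As $\{i,j\}$ was arbitrary, this yields $S\psi_T=0$, that is $\psi_T\in\kerr(S)$. If one prefers a more pedestrian presentation, the same computation splits into the three transparent cases $\{i,j\}\cap T=\emptyset$, $|\{i,j\}\cap T|=1$, and $\{i,j\}\subseteq T$; only the last case genuinely needs $|T|\ge3$, the other two vanishing for even softer reasons.

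I do not expect a real obstacle here: the argument is just the standard vanishing of an alternating (Möbius-type) sum over a Boolean lattice of a function that factors through a proper coordinate subset. The only points that need care are the bookkeeping — keeping straight the two roles of the symbol $\delta_C$ (a standard basis vector of $\mathbb{R}^{2^n-2}$ in the definition of $\psi_T$, versus a cut semimetric in $\mathbb{R}^{n\choose 2}$ after applying $S$), checking that including the trivial cuts is harmless, and noting that the hypothesis $|T|\ge3$ is used exactly once, to force $T\setminus\{i,j\}\neq\emptyset$ so that the inner binomial sum collapses.
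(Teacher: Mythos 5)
Your proposal is correct and follows essentially the same route as the paper's proof: compute each $\{i,j\}$-coordinate of $S\psi_T$, partition the cuts $C\subseteq T$ by their intersection with $\{i,j\}$, and collapse the inner alternating sum via $(1-1)^{|T\setminus\{i,j\}|}=0$, using $|T|\ge 3$ exactly where the paper does. The only difference is cosmetic: you package the three cases into one factored formula, while the paper writes them out separately.
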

\begin{proof}
The proof is a direct computation. We need to show that the dot product of every row of $S$ with $\psi_T$ equals zero. Let 
$$S_{\{i,j\}} = \text{ row of $S$ corresponding to the pair }\{i,j\}\subset V_n.$$
The columns of $S$ are enumerated by nontrivial cuts $C$ of $V_n$, and the $C$-coordinate of $S_{\{i,j\}}$ equals $\delta_C(\{i,j\})$. This coordinate equals 1 if and only if $C$ contains either $i$ or $j$ but not both. For all other cuts, the coordinate equals 0. There are 3 cases to consider for evaluating the dot product $S_{\{i,j\}}\cdot \psi_T$ according to whether the cardinality of $T\cap \{i,j\}$ equals 0, 1 or 2. 
\vskip3mm
{\bf Case of $T\cap \{i,j\} = \emptyset$.} In this case the non-zero coordinates of $\psi_T$ correspond to cuts $C\subset T\subset V_n$, none of which contain $i$ or $j$. Thus, in the dot product $S_{\{i,j\}} \cdot \psi_T$ we are summing $2^n-2$ terms that are products of a coordinate $S_{\{i,j\}}$ and a coordinate in $\psi_T$ at least one of which is always 0. Accordingly, $S_{\{i,j\}} \cdot \psi_T=0$.
\vskip3mm
{\bf Case of $T\cap \{i,j\} = \{i\}$.} The cuts of $T$ can be divided into two disjoint collections, namely 
\begin{itemize}
	\item[(a)] Cuts $C\subset T-\{i\}$. 
	\item[(b)] Cuts of the form $C\cup \{i\}$ with $C$ as in (a). 
\end{itemize}
None of these cuts contain $j$, and only the cuts in (b) contain $i$. For all other cuts, the corresponding entry of $\psi_T$ equals zero. For the cuts $C$ in part (a), the corresponding coordinates of $S_{\{i,j\}}$ equal zero. It follows that
\begin{align*}
S_{\{i,j\}} \cdot \psi_T & = \sum _{\tiny\begin{array}{c} C\cup\{i\}, \cr  
 C\subset T-\{i\}\end{array}} (-1)^{|C|+1} \cr
& = - \sum _{C\subset T-\{i\}} (-1)^{|C|} \cr 
& = - \sum_{k=0}^{|T|-1}\, \, \,  \sum _{\tiny \begin{array}{c} C\subset T-\{i\}\cr |C|=k \end{array}} (-1)^{k} \cr 
& = - \sum_{k=0}^{|T|-1}\,(-1)^k\, {|T|-1\choose k} \quad \text{(use Binomial Theorem)}  \cr 
& = -(1-1)^{|T|-1} \cr
& = 0. 
\end{align*}
Note that here it was important that the empty cut $C=\emptyset$ be included in the summation over the cuts in $T-\{i\}$ since otherwise the cut $\{i\} = \emptyset \cup \{i\}$ would not have appeared among the cuts in part (b) above.  
\vskip3mm
{\bf Case of $T\cap \{i,j\} = \{i,j\}$.} In this case we partition the cuts of $T$ into 4 disjoint categories:
\begin{itemize}
\item[(a)] Cuts $C\subset T-\{i,j\}$  (since $|T|\ge 3$ by assumption, then $T-\{i,j\}\ne \emptyset$).  
\item[(b)] Cuts $C\cup \{i\}$ with $C$ as in (a). 
\item[(c)] Cuts $C\cup \{j\}$ with $C$ as in (a). 
\item[(d)] Cuts $C\cup \{i,j \}$ with $C$ as in (a).  
\end{itemize} 
The cuts in parts (a) and (d) lead to zero coordinates for $S_{\{i,j\}}$ and thus do not contribute to the scalar product $S_{\{i,j\}} \cdot \psi_T$. For each of the cuts $C\cup\{i\}$ and $C\cup\{j\}$ with $C\subset T-\{i,j\}$, the corresponding coordinate of $S_{\{i,j\}}$ equals 1. Therefore:
\begin{align*}
S_{\{i,j\}} \cdot \psi_T & = \sum  _{\tiny \begin{array}{c} C\cup\{i\} \cr C\subset T-\{i,j\}  \end{array}} (-1)^{|C|+1} + \sum  _{\tiny \begin{array}{c} C\cup\{j\} \cr C\subset T-\{i,j\}  \end{array}} (-1)^{|C|+1} = -2\sum _{C\subset T-\{i,j\}} (-1)^{|C|}.
\end{align*} 
This term vanishes for the same reason as in Case 2 (since $|T-\{i,j\}|\ge 1$), and thus we have shown that each $\psi_T$ lies in the kernel of $S$. 

This last case illuminates the necessity for the condition of $3\le |T|$. If we had $|T|=2$, then $T$ would equal $\{i,j\}$ for some pair of indices $i, j$. The above sum then only contains a single term, corresponding to $C= \emptyset \subset T-\{i,j\} = \emptyset$,  in which case $S_{\{i,j\}} \cdot \psi_T = -2$. We leave it as an exercise to show that $\psi_{\{i\}}$ likewise isn't a kernel element, while $\psi_\emptyset$ is the zero vector. 
\end{proof}
This next result is not needed in the sequel, we provide it for sake of completeness, and without proof.  
\begin{lemma}
If $T_1, T_2\subset V_n$ are subsets with $|T_1|=1$ and $|T_2|=2$, say $T_1=\{i\}$ and $T_2=\{i,j\}$, then the $\{k,\ell\}$-th coordinate of $S\cdot \psi_{T_m}$ for $m=1,2$ is given by:
\begin{align*}
(S\cdot \psi_{T_1})_{\{k,\ell\}} & = \left\{ 
\begin{array}{cl}
1 & \quad ; \quad i\in \{k,\ell\}, \cr 
0 &  \quad ; \quad i\notin \{k,\ell\},
\end{array} 
\right. \cr
& \cr 
(S\cdot \psi_{T_2})_{\{k,\ell\}} & = \left\{ 
\begin{array}{rl}
-2 & \quad ; \quad \{i,j\} = \{k,\ell\}, \cr 
0 &  \quad ; \quad \{i,j\} \ne \{k,\ell\}.
\end{array} 
\right.
\end{align*} 
\end{lemma}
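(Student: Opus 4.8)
The plan is to compute $S\cdot\psi_{T_m}$ coordinate by coordinate, following verbatim the case analysis in the proof of Proposition~\ref{Proof of alternating sums lying in the kernel}; the only new phenomenon is that the binomial cancellations which annihilated \emph{every} coordinate there no longer fully fire, because $|T_m|<3$, so a single residual term survives. In fact the $T_2$ computation is essentially already carried out at the very end of that proof, in the parenthetical remark explaining why the hypothesis $3\le|T|$ is needed.

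For $T_1=\{i\}$ the statement is immediate. Among the cuts $C\subset T_1$ only $C=\emptyset$ and $C=\{i\}$ occur, and since $\delta_\emptyset$ is the zero vector of $\mathbb R^{2^n-2}$, the defining formula~\eqref{The alternating sum generators of the kernel of the full cut matrix} collapses to $\psi_{T_1}=-\delta_{\{i\}}$, interpreted as a standard basis vector indexed by the cut $\{i\}$. Hence $S\cdot\psi_{T_1}$ is, up to sign, the column of $S$ attached to the cut $\{i\}$, i.e. the cut-metric $\delta_{\{i\}}\in\mathbb R^{n\choose 2}$, whose $\{k,\ell\}$-th entry equals $1$ exactly when precisely one of $k,\ell$ is $i$, that is when $i\in\{k,\ell\}$. (One must match this against the sign convention of~\eqref{The alternating sum generators of the kernel of the full cut matrix}.)

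For $T_2=\{i,j\}$ I would split, exactly as in the proof of Proposition~\ref{Proof of alternating sums lying in the kernel}, according to whether $|T_2\cap\{k,\ell\}|$ equals $0$, $1$, or $2$. The first two cases reproduce that proof word for word: if $T_2\cap\{k,\ell\}=\emptyset$ every product of matching coordinates of the row $S_{\{k,\ell\}}$ and of $\psi_{T_2}$ vanishes; if $|T_2\cap\{k,\ell\}|=1$, one groups the cuts of $T_2$ into those contained in $T_2$ minus the common vertex and their augmentations by that vertex, and the resulting alternating sum is $-(1-1)^{|T_2|-1}=0$ by the Binomial Theorem — still valid since $|T_2|-1=1\ge 1$. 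The only genuinely different case is $|T_2\cap\{k,\ell\}|=2$, i.e. $\{i,j\}=\{k,\ell\}$, where the relevant alternating sum runs over cuts $C\subset T_2-\{i,j\}=\emptyset$; this set now contains exactly the cut $C=\emptyset$, so instead of vanishing the sum contributes $-2\sum_{C\subset\emptyset}(-1)^{|C|}=-2$, and all remaining coordinates are $0$.

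I do not anticipate a real obstacle: the whole argument is bookkeeping already rehearsed in Proposition~\ref{Proof of alternating sums lying in the kernel}. The only thing needing care is the discipline of tracking the signs $(-1)^{|C|}$ against the $0/1$ entries of the rows of $S$, and handling the empty cut consistently — precisely the subtlety the authors flag at the close of that proof. Once those are pinned down, the two displayed formulas fall out directly.
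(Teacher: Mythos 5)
Your method --- direct coordinate-wise computation, reusing the case analysis of Proposition~\ref{Proof of alternating sums lying in the kernel} --- is the right one (the paper supplies no proof of this lemma, so there is nothing to compare against methodologically), and your treatment of $T_2$ is complete and correct: the cases $|T_2\cap\{k,\ell\}|=0,1$ vanish exactly as in that proposition, and the case $\{k,\ell\}=\{i,j\}$ leaves the single surviving term $-2\sum_{C\subset\emptyset}(-1)^{|C|}=-2$.

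The gap is in the $T_1$ case, precisely at the point you defer with ``up to sign'' and ``one must match this against the sign convention.'' A proof has to commit to that sign, and when you do, you get the opposite of what the lemma asserts. From \eqref{The alternating sum generators of the kernel of the full cut matrix}, $\psi_{\{i\}}=(-1)^0\delta_\emptyset+(-1)^1\delta_{\{i\}}=-\delta_{\{i\}}$; the paper itself records this in the proof of Proposition~\ref{Proposition showing linear independence of alternating sum vectors}, where it notes that $\psi_{\{i\}}$ has a single non-zero coordinate equal to $-1$. Hence $S\cdot\psi_{T_1}$ is \emph{minus} the column of $S$ indexed by the cut $\{i\}$, so its $\{k,\ell\}$-th coordinate equals $-1$ when $i\in\{k,\ell\}$ and $0$ otherwise. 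This is also what the singleton instance of the ``Case of $T\cap\{i,j\}=\{i\}$'' computation gives: $-(1-1)^{|T|-1}=-(1-1)^{0}=-1$. So the first displayed formula of the lemma as printed carries the wrong sign (the support is right, but the value should be $-1$, not $+1$), and your argument, once the sign is pinned down, proves the corrected statement rather than the stated one. You should resolve the sign explicitly and note the discrepancy rather than leaving it open.
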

Proposition \ref{Proof of alternating sums lying in the kernel} produces kernel elements $\psi_T$ of $S$ for every subset $T\subset V_n$, $3\le |T|$. There are $2^n-1-{n\choose 2}-n$ such sets. 

\begin{proposition} \label{Proposition showing linear independence of alternating sum vectors}
Let $n\ge 3$. The set 
$$\mathcal V_n = \{\psi_T\,|\, T\subset V_n, \, 1\le |T|\le n\}$$
of $2^n-1$ vectors of dimension $2^n-2$, has the property that if we remove any one vector from it, the remaining $2^n-2$ vectors are linearly independent, and thus a bassis for $\mathbb R^{2^n-2}$. In particular, its subset 
$$\mathcal T_n = \{\psi_T\,|\, T\subset V_n, 3\le|T|\}$$
is linearly independent for all $n\ge 3$. 
\end{proposition}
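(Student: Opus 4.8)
The plan is to show that the linear map $M:\mathbb{R}^{2^n-1}\to\mathbb{R}^{2^n-2}$ sending the standard basis vector indexed by a nonempty subset $T\subseteq V_n$ to $\psi_T$ has rank exactly $2^n-2$, with a one-dimensional kernel spanned by a vector all of whose coordinates are nonzero; the statement about removing any one $\psi_T$ then follows, since deleting the $T$-column of $M$ kills exactly the $T$-coordinate of the kernel generator and hence produces an invertible $(2^n-2)\times(2^n-2)$ matrix. So the work is entirely about (a) identifying the kernel of $M$ and (b) checking the kernel generator has no zero coordinate.

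The key observation I would exploit is that the definition $\psi_T=\sum_{C\subseteq T}(-1)^{|C|}\delta_C$ is, up to signs, a M\"obius-type transform on the Boolean lattice, so it should be invertible in a clean combinatorial way. Concretely, I would first expand $\psi_T$ in the standard basis $\{\delta_C : \emptyset\neq C\subsetneq V_n\}\cup\{$zero$\}$: the $C$-coordinate of $\psi_T$ is $(-1)^{|C|}$ if $\emptyset\neq C\subseteq T$ and $0$ otherwise (remembering $\delta_\emptyset=\delta_{V_n}=0$, which is precisely why the count of coordinates drops from $2^n-1$ inputs to $2^n-2$ outputs, and why a kernel appears). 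Then for a fixed nonempty $C\subsetneq V_n$, writing the relation $\sum_T \lambda_T\psi_T=0$ coordinate-by-coordinate gives $(-1)^{|C|}\sum_{T\supseteq C}\lambda_T=0$, i.e. $\sum_{T\supseteq C}\lambda_T=0$ for every nonempty proper $C$. By M\"obius inversion over the Boolean lattice (or by a direct downward induction on $|C|$, peeling off $|C|=n-1$ first, then $|C|=n-2$, and so on), this forces $\lambda_T$ to be a fixed scalar multiple of $(-1)^{|T|}$ for all $T$ with $1\le|T|\le n-1$, and then the single remaining equation coming from $C=V_n$ being \emph{absent} (there is no coordinate indexed by $V_n$) leaves $\lambda_{V_n}$ free, tied to the others by the one genuine relation $\sum_{\emptyset\neq T\subseteq V_n}(-1)^{|T|}\psi_T$-type identity. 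I would pin down the kernel generator explicitly as $\kappa=\bigl(\kappa_T\bigr)_{T}$ with $\kappa_T=(-1)^{|T|}$ (or whatever normalization the induction produces); the point is that $\kappa_T\in\{\pm1\}$ for every $T$, hence never zero.

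Once the kernel is pinned to this single vector with all nonzero entries, the conclusion is immediate: fix any $T_0$ with $1\le|T_0|\le n$, let $M'$ be $M$ with the $T_0$-column removed; if $M'$ had a nontrivial kernel vector, padding it with a $0$ in position $T_0$ would give a kernel vector of $M$ vanishing at $T_0$, contradicting $\kappa_{T_0}\neq0$ (up to scaling, $\ker M$ is spanned by $\kappa$). Hence $M'$ is an injective, and by dimension count a bijective, map $\mathbb{R}^{2^n-2}\to\mathbb{R}^{2^n-2}$, so the $2^n-2$ vectors $\{\psi_T : T\neq T_0\}$ form a basis. The final sentence of the proposition, that $\mathcal{T}_n$ is linearly independent, is then a sub-case: $\mathcal{T}_n$ is obtained from $\mathcal{V}_n$ by deleting the $n$ singletons and the $\binom n2$ pairs — in particular by deleting at least one vector — and any subset of a set that becomes independent after deleting one element is itself independent.

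The main obstacle I anticipate is the bookkeeping in step (a): carefully justifying that the system $\{\sum_{T\supseteq C}\lambda_T=0 : \emptyset\neq C\subsetneq V_n\}$ (which is $2^n-2$ equations in $2^n-1$ unknowns) has exactly a one-dimensional solution space, and that the solution is the alternating vector. The cleanest route is to invoke M\"obius inversion on the Boolean lattice: the matrix $\bigl[\mathbbm 1_{T\supseteq C}\bigr]_{C,T}$ over \emph{all} subsets (including $\emptyset$ and $V_n$) is unitriangular hence invertible with inverse given by $(-1)^{|T|-|C|}\mathbbm 1_{T\supseteq C}$, so deleting the two rows $C=\emptyset$ and $C=V_n$ drops the rank by exactly $2$, but one of those deletions ($C=\emptyset$) corresponds to a trivially-satisfied equation ($\delta_\emptyset=0$ carries no information) — I need to be slightly careful to confirm the genuine corank is $1$, not $0$ or $2$, which is exactly the content of $\mathrm{rk}(S_n)=\binom n2$ combined with $\dim\ker(S_n)=2^n-2-\binom n2$ from \eqref{Dimension of the kernel of the full cut matrix}, together with the already-established Propositions \ref{Proposition generating the skew-symmetric kernel elements} and \ref{Proof of alternating sums lying in the kernel} showing the $\psi_T$ (and $\varphi_k$) land in $\ker S$. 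In fact the slickest finish may avoid M\"obius entirely: one knows $\dim\ker S = 2^n-2-\binom n2$, one knows all $\psi_T\in\ker S$, and a short rank computation of the $\psi_T$ against this known dimension — or an explicit verification that $\sum_T(-1)^{|T|}\psi_T=0$ is the \emph{only} relation — closes the argument.
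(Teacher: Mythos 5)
Your proposal is correct and rests on the same underlying mechanism as the paper's proof: the unitriangularity of the subset-containment matrix on the Boolean lattice (which the paper exploits as a forward elimination showing $\{\psi_T: 1\le |T|<n\}$ is independent, and which you invoke as M\"obius inversion to pin down a one-dimensional kernel), together with the observation that the unique linear relation among all the $\psi_T$ has every coefficient equal to $\pm 1$ — this is exactly the paper's identity \eqref{Linear combination for psi suv Vn}, repackaged as your kernel generator $\kappa_T=(-1)^{|T|}$. The dual "kernel of the synthesis map" framing is a clean way to organize the same computation, and your deduction that deleting any one column yields an injective square matrix is valid.
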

\begin{proof}
We shall firstly prove that the set $\mathcal W_n = \{\psi_T\,|\, T\subset V_n, 1\le |T|<n\}$ is linearly independent and a basis. 

Start by observing that the set $\{\psi_{\{1\}}, \dots, \psi_{\{n\}}\}$ is linearly independent. This is so because $\psi_{\{i\}}$ has only one non-zero coordinate, namely -1, at the $i$-th coordinate entry.  

Next consider the set $\{\psi_{\{1\}}, \dots, \psi_{\{n\}}, \psi_{\{1,2\}}, \dots, \psi_{\{n-1,n\}}\}$. This set is linearly independent if and only if the modified set is linearly independent in which we have replaced each $\psi_{\{i,j\}}$ by $\psi_{\{i,j\}} - \psi_{\{i\}}-\psi_{\{j\}}$. But the latter now has only one non-vanishing coordinate, namely 1 at the cut $\{i,j\}$, making the linear independence of the modified set obvious. 

One proceeds this way by modifying more generally the element $\psi_{\{i_1,\dots, i_k\}}$ by subtracting vectors associated to cuts of smaller cardinality than $k$, so as to create a vector with only one non-zero coordinate, namely $(-1)^k$ as coordinate entry corresponding to the cut $\{i_1,\dots, i_k\}$. This proved the linear independence of $\mathcal V_n$, showing also that it is a basis for $\mathbb R^{2^n-2}$, since $|\mathcal V_n| = 2^n-2$. 

To complete the proof, we show that 
\begin{equation} \label{Linear combination for psi suv Vn}
\psi_{V_n} = (-1)^{n-1} \sum _{\tiny 
\begin{array}{c} T\subset V_n \cr 1\le |T|<n\end{array}} (-1)^{|T|}\, \psi_T. 
\end{equation}
Given a cut $C\subset V_n$, the $C$-coordinate of the left-hand side equals $(-1)^{|C|}$. Terms on the right-hand side with non-zero $C$-coordinate correspond to those non-empty sets $T\subsetneq V_n$ that contain $C$, in which case the $C$-coordinate $(\psi_T)_C$ of $\psi_T$ also equals $(-1)^{|C|}$. Let $m=|V_n-C| = n-|C|$, then the right-hand side of \eqref{Linear combination for psi suv Vn} becomes   
\begin{align*}
(-1)^{n-1} \sum _{\tiny 
\begin{array}{c} T\subset V_n \cr 1\le |T|<n\end{array}}& (-1)^{|T|}\, (\psi_T)_C  = (-1)^{|C|} \sum _{C\subset T\subsetneq V_n} (-1)^{|T|} \cr
&  = (-1)^{|C|+n-1} \sum _{\tiny \begin{array}{c} T=C\sqcup D\cr D\subsetneq V_n-C\end{array}} (-1)^{|T|}  \cr
&  = (-1)^{|C|+n-1} \sum _{\tiny \begin{array}{c} T=C\sqcup D\cr D\subsetneq V_n-C\end{array}} (-1)^{|C|}\cdot (-1)^{|D|}  \cr
& \textstyle = (-1)^{n-1}\, \sum _{i=0}^{m-1} (-1)^i {m\choose i} \cr
& \textstyle  = (-1)^{n-1}\, \left[\left(\sum _{i=0}^{m} (-1)^i\, {m\choose i}\right) -(-1)^m {m\choose m}\right]  \cr
& \textstyle  = (-1)^{n-1}\, \left[ (1-1)^m -(-1)^m {m\choose m} \right] \cr
& = (-1)^{n+m}  \cr
& = (-1)^{|C|}.
\end{align*}
Since $\mathcal W_n$ is linearly independent and since $\mathcal V_n = \mathcal W_n \cup \{\psi_{v_n}$ and since $\psi_{V_n}$ is linear combination of the vectors from $\mathcal W_n$ with coefficients $\pm$ next to each, then $\mathcal V_n - \psi_T$ is also linear independent, and hence a basis, for any choice of $\emptyset \ne T\subsetneq V_n$, thereby proving the proposition. 
\end{proof}
Propositions \ref{Proposition generating the skew-symmetric kernel elements}, \ref{Proof of alternating sums lying in the kernel} and \ref{Proposition showing linear independence of alternating sum vectors} produce a total of 
$$ \left( 2^{n-1}-1 \right) + \left( 2^n-1-{n\choose 2}-n \right) = \left( 2^n-2 -{n \choose 2}\right)  + \left(2^{n-1}-n \right) $$ 
vectors in the kernel of $S$. Since the dimension of the kernel is $2^n-2 -{n\choose 2}$, we have produced a linearly dependent set of kernel vectors. From these we extract a specific basis. Note that all the vectors from both families all have coordinates equal to $-1$, $0$ or $1$. 
\subsection{A basis for the kernel of $S$} \label{The subsection with the actual basis for the kernel of S}
\begin{proposition} \label{Proposition with the basis for the kernel}
Let as before $\mathcal T_n = \{\psi_T\,|\, T\subset V_n, 3\le|T|\}$ be the set of $2^n-1-{n\choose 2}-n$ elements in the kernel of $S$. Then 
$$\mathcal B_n = \mathcal T_n \cup \{\varphi_i\}_{i=1,\dots, n-1}$$
is a basis for the kernel of $S$.  
\end{proposition}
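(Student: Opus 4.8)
The plan is to count dimensions and then verify linear independence. We know from \eqref{Dimension of the kernel of the full cut matrix} that $\kerr(S)$ has dimension $2^n-2-{n\choose 2}$. The set $\mathcal T_n$ has $2^n-1-{n\choose 2}-n$ elements, and we are adjoining the $n-1$ vectors $\varphi_1,\dots,\varphi_{n-1}$, for a total of
$$\left(2^n-1-\textstyle{n\choose 2}-n\right)+(n-1)=2^n-2-\textstyle{n\choose 2}$$
vectors, which matches $\dim\kerr(S)$ exactly. All the vectors in $\mathcal B_n$ lie in $\kerr(S)$ by Propositions \ref{Proposition generating the skew-symmetric kernel elements} and \ref{Proof of alternating sums lying in the kernel}. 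So it suffices to prove that $\mathcal B_n$ is linearly independent, and then it is automatically a basis.

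To prove linear independence, I would suppose a vanishing linear combination $\sum_{T}a_T\,\psi_T+\sum_{i=1}^{n-1}b_i\,\varphi_i=0$ (sum over $T\subset V_n$ with $3\le|T|$) and extract the coefficients one at a time. The key observation is that $\mathcal T_n\subset\mathcal V_n$, and by Proposition \ref{Proposition showing linear independence of alternating sum vectors} the full set $\mathcal V_n$ (of all $\psi_T$ with $1\le|T|\le n$) is ``almost'' a basis: removing any single vector leaves a basis of $\mathbb R^{2^n-2}$. The modification argument in the proof of that proposition gives, for each $T$ with $|T|=k$, a linear combination $\widetilde\psi_T$ of the $\psi_{T'}$ with $T'\subseteq T$ whose only nonzero coordinate is $(-1)^k$ at the cut $T$. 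I would run the same triangular elimination here: first detect the coefficients $a_T$ for $|T|=3$ by pairing the linear combination with the ``dual'' vectors $\widetilde\psi_T$ for $|T|=3$ — but one must first dispose of the $\varphi_i$ and the absent low-cardinality $\psi_T$ contributions to those coordinates. The cleanest route is to note that $\varphi_k=\delta_{C}-\delta_{\bar C}$ for the $k$-th cut $C$, so the support of each $\varphi_i$ consists of exactly one singleton cut $\{i\}$ (coefficient $+1$) and its complement $V_n\setminus\{i\}$ (coefficient $-1$); in particular $\varphi_i$ has no support on any cut $C$ with $3\le|C|\le n-3$. Meanwhile, each $\psi_T$ with $|T|\ge 3$ does have support on cuts of cardinality $3$, namely the $3$-subsets of $T$.

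Concretely, I would argue as follows. Look at the coordinate of the linear combination indexed by a $3$-element cut $C=\{p,q,r\}$: since $|C|=3$, no $\varphi_i$ contributes there, and $(\psi_T)_C=(-1)^3=-1$ precisely when $C\subseteq T$. Hence for every $3$-set $C$ we get $\sum_{T\supseteq C}a_T=0$. Now I would peel these off by cardinality: for $|T|=3$ this forces $a_T=0$ once the higher-cardinality contributions are shown to cancel, and more systematically, applying the Möbius-style triangular substitution (replace $\psi_T$ by $\widetilde\psi_T$, which has a single nonzero entry at the cut $T$, valid as long as $3\le|T|\le n$ so that $T$ is itself a non-trivial cut — note $|T|=n$ gives $T=V_n$, whose coordinate is the zero vector, so one instead uses a cut of cardinality $n-1$; this is exactly the subtlety handled in Proposition \ref{Proposition showing linear independence of alternating sum vectors}) shows $a_T=0$ for all $T$ with $3\le|T|\le n-1$, and then handles $T=V_n$ via \eqref{Linear combination for psi suv Vn}. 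Once all $a_T=0$, we are left with $\sum_i b_i\varphi_i=0$, and since the $\varphi_i$ are linearly independent by Proposition \ref{Proposition generating the skew-symmetric kernel elements}, all $b_i=0$.

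The main obstacle I anticipate is bookkeeping around the cut of cardinality $n$, i.e. $T=V_n$: because $\delta_{V_n}=0$ as a vector in $\mathbb R^{2^n-2}$, the vector $\psi_{V_n}$ does not have a ``private'' coordinate at the cut $V_n$ the way the other $\psi_T$ do, so the triangular elimination cannot terminate naively at the top. The fix — already present in the excerpt — is that $\psi_{V_n}$ is detected instead through coordinates at cuts of cardinality $n-1$ (which $\varphi_i$ also touch, since $\bar{\{i\}}$ has cardinality $n-1$), so one has to carefully separate the $\psi_{V_n}$ contribution from the $\varphi_i$ contributions at those coordinates. This is purely combinatorial and follows the pattern of the proof of Proposition \ref{Proposition showing linear independence of alternating sum vectors}; no new idea is needed beyond organizing the elimination by $|T|$ and treating $|T|=n-1,n$ with extra care.
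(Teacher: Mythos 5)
Your framing --- count the vectors, note they all lie in $\kerr(S)$ by Propositions \ref{Proposition generating the skew-symmetric kernel elements} and \ref{Proof of alternating sums lying in the kernel}, and reduce everything to linear independence --- matches the paper. The gap is in the linear-independence step. You propose to detect the coefficients $a_T$ first, by evaluating the relation at coordinates (cuts of cardinality $3$) where no $\varphi_i$ has support, obtaining $\sum_{T\supseteq C}a_T=0$ for every $3$-set $C$, and then ``peeling off by cardinality.'' This cannot work: those equations (indeed, all the equations coming from cuts $C$ with $2\le|C|\le n-2$, which is the entire set of coordinates untouched by the $\varphi_i$) do not force the $a_T$ to vanish. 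Concretely, one can verify coordinate-by-coordinate the identity
$$\sum_{\substack{T\subseteq V_n\\ 3\le |T|}}(-1)^{|T|}\bigl(|T|-2\bigr)\,\psi_T \;=\; \sum_{i=1}^{n}\Bigl(\delta_{\{i\}}-\delta_{V_n\setminus\{i\}}\Bigr),$$
whose right-hand side is supported only on the singleton cuts and their complements. So there is a nonzero choice of the $a_T$, namely $a_T=(-1)^{|T|}(|T|-2)$, for which every cardinality-$3$ coordinate --- and every coordinate your elimination sees --- vanishes. Your claim that the triangular/M\"obius elimination ``shows $a_T=0$ for all $T$ with $3\le|T|\le n-1$'' is therefore false as stated; the obstruction is not confined to $T=V_n$ as you anticipate, but pervades all cardinalities.

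What saves the proposition is precisely that $\varphi_n$ is \emph{not} included in $\mathcal B_n$: the displayed identity says $\sum_{i=1}^n\varphi_i\in\mathrm{span}(\mathcal T_n)$ (where $\varphi_n=\delta_{\{n\}}-\delta_{V_n\setminus\{n\}}$), so the larger set $\mathcal T_n\cup\{\varphi_1,\dots,\varphi_n\}$ is genuinely linearly dependent. Any correct argument must therefore exploit the two coordinates at the cuts $\{n\}$ and $V_n\setminus\{n\}$, which see the $\psi_T$'s but none of $\varphi_1,\dots,\varphi_{n-1}$; your sketch never uses them. The paper sidesteps the entanglement by running the elimination in the opposite order: it builds functionals $P_k$ (signed coefficient mass on cuts containing $k$) that annihilate \emph{every} $\psi_T$ with $|T|\ge 2$ while pairing nontrivially with the $\varphi_i$; applying $P_k-P_{k+1}$ kills all the $\varphi$-coefficients first, and then the already-established linear independence of $\mathcal T_n$ (Proposition \ref{Proposition showing linear independence of alternating sum vectors}) finishes. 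I recommend adopting that order, or else redoing your elimination with explicit use of the cardinality-$1$ and cardinality-$(n-1)$ coordinates and the special role of the index $n$.
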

\begin{proof}
The cardinality of $\mathcal B_n$ agrees with the dimension of $\kerr(S)$, thus we only need to demonstrate the linear independence of $\mathcal B_n$. 

For an index $k\in V_n$ consider the linear functional $P_k:\mathbb R^{2^n-2}\to \mathbb R$ given as 
$$P_k\left( \sum_{\emptyset\subsetneq C \subsetneq V_n} \lambda _C\, \delta_C\right) = \sum \lambda _C \cdot I(k\in C). $$
Here $I(k\in C)$ is the indicator function, returning 1 if $k\in C$ and returning $0$ if $k\notin C$. It is easy to see that $P_k$ is linear as it is the composition of an orthogonal projection (onto the subspace of basis vectors $\delta_C$ with $k\in C$) with the trace functional. 

For any $\psi_T\in \mathcal T_n$ and for any choice of $k\in V_n$, we obtain $P_k(\psi_T) = 0$, as we demonstrate next: If $k\notin T$ then clearly $P_k(\psi_T)=0$. So assume that $k\in T$, then 
\begin{align*}
P_k(\psi_T) & = P_k\left(\sum _{C\subset T} (-1)^{|C|} \delta_C  \right), \cr
& = P_k\left(\sum _{C\subset T-\{k\}} (-1)^{|C|+1} \delta_{C\cup\{k\}}  \right), \cr
& = \sum _{C\subset T-\{k\}} (-1)^{|C|+1},   \cr
& = -\sum _{i=0}^{|T|-1} (-1)^i, \cr 
& = - (1-1)^{|T|-1},\cr
& = 0. 
\end{align*}  
On the other hand, 
$$P_k(\varphi_j) = \left\{ 
\begin{array}{rl}
1 & \quad ; \quad j=k, \cr
-1 & \quad ; \quad j\ne k.
\end{array} 
\right. 
$$
From this we define new linear functionals $L_k:\mathbb R^{2^n-2}\to \mathbb R$ by setting $L_k = P_k - P_{k+1}$, for $k=1,\dots, n-1$. Then it is still true that $L_k(\psi_T)=0$ for every $\psi _T\in \mathcal T_n$. On the other hand,   
\begin{align*}
L_{n-1}(\varphi_j) & = \left\{ 
\begin{array}{rl}
2 & \quad ; \quad j=n-1, \cr
0 & \quad ; \quad j< n-1,.
\end{array} 
\right. \cr
& \cr 
L_{k}(\varphi_j) & = \left\{ 
\begin{array}{rl}
2 & \quad ; \quad j=k, \cr
-2 & \quad ; \quad j=k+1, \cr
0 & \quad ; \quad j\ne k, k+1,
\end{array} 
\right.
\end{align*}
for $k<n-1$. With this understood, suppose that the set $\mathcal B_n$ were linearly dependent and consider a linear dependence relation:
$$\sum _{i=1}^{n-1}\mu_i\, \varphi_i + \sum _{\psi_T\in \mathcal T_n}\lambda _T\,  \psi_T = 0. $$
Apply firstly the linear functional $P_{n-1}$ to both sides to obtain $\mu_{n-1} = 0$. Now consecutively applying $P_{n-2}, \dots, P_1$ to the linear dependence relation further yields $\mu_{n-2} =0, \dots, \mu_1=0$. Since we already showed that $\mathcal T_n$ is linearly independent (cf. Proposition \ref{Proposition showing linear independence of alternating sum vectors}), it follows that $\lambda _T=0$ for all $T$, proving the proposition. 
\end{proof}
\begin{remark}
The preceding proposition can be generalized to show that in fact each set 
$$ \mathcal T_n \cup \{\varphi_{i_1}, \dots, \varphi_{i_{n-1}}\},$$
for any choice of indices $1\le i_1<\dots<i_{n-1}\le 2^{n-1}-1$ is linearly independent and thus also a basis for the kernel of $S$. The proof is more convoluted but follows similar ideas to the proof already provided. 
\end{remark}
\subsection{Concluding remarks} 
The results of Sections \ref{Section on the full cut matrix} and \ref{The subsection with the actual basis for the kernel of S} provide a pathway to determining if a metric $d$ on $V_n$ lies in the cut-cone $CUT_n$: One finds an initial solution $\mathbf w$ of the linear system $Sw=d$ with 
$$\mathbf w_C =  \frac{1}{2^{n-2}} \left( \left( \sum_{i\in C, j\notin C} d(i,j)\right) - \frac{\text{Tr}(d)}{{n \choose 2}+1} |C|(n-|C|) \right),$$
where $C\subset V_n$ is a non-trivial cut. If $\mathbf w\ge 0$, $d$ lies in the cut cone. If not, one examines the linear combinations 
$$\mathbf{w} + \sum_{i=1}^{n-1}\mu_i \,\varphi_i + \sum _{\psi_T\in \mathcal T_n}\lambda _T\, \psi_T,$$
with $\mu_i, \lambda _T\in \mathbb R$, and with the basis elements $\mu_i, \psi_T$ of $S$ as in Proposition \ref{Proposition with the basis for the kernel}. Then $d\in CUT_n$ if and only if among these vectors there is one with all coordinates non-negative. This is computationally challenging, especially as $n$ grows, but not impossible for concrete examples. We leave an exploration of this approach for future work.  

\bibliographystyle{plain}
\bibliography{bibliography}
\end{document}